\def\thesection{\arabic{section}}
\def\theequation{\thesection.\arabic{equation}}
\newcommand{\ds} {\displaystyle}
\newcommand{\e}{\epsilon}
\newcommand{\de} {\delta}
\newcommand{\Om} {\Omega}
\newcommand{\De} {\Delta}
\newcommand{\la} {\lambda}
\newcommand{\noi} {\noindent}
\newcommand{\mb} {\mathbb}
\newcommand{\mc} {\mathcal}
\newcommand{\q} {\int_{\Om}(\la|u|^q+\delta|v|^q)\mathrm{d}x}
\newcommand{\Q} {\int_{\Om}(\la|u_k|^q+\delta|v_k|^q)\mathrm{d}x}
\markboth{\small } {\small Doubly nonlocal equation with critical nonlinearity}
\def\theequation{\@arabic{\c@section}.\@arabic{\c@equation}}
\newcommand{\QED}{\rule{2mm}{2mm}}
\newtheorem{Theorem}{Theorem}[section]
\newtheorem{Lemma}[Theorem]{Lemma}
\newtheorem{Proposition}[Theorem]{Proposition}
\newtheorem{Corollary}[Theorem]{Corollary}
\newtheorem{Remark}[Theorem]{Remark}
\newtheorem{Definition}[Theorem]{Definition}
\begin{document}

{\vspace{0.01in}}

\title
{ \sc Doubly nonlocal system with Hardy-Littlewood-Sobolev critical nonlinearity}

\author{J. Giacomoni\footnote{Universit\'{e} de Pau et des Pays de l'Adour, CNRS, LMAP UMR 5142, avenue de l'universit\'{e}, 64013 Pau cedex France. email:jacques.giacomoni@univ-pau.fr}, ~~ T. Mukherjee\footnote{Department of Mathematics, Indian Institute of Technology Delhi,
Hauz Khaz, New Delhi-110016, India.
 e-mail: tulimukh@gmail.com}~ and ~K. Sreenadh\footnote{Department of Mathematics, Indian Institute of Technology Delhi,
Hauz Khaz, New Delhi-110016, India.
 e-mail: sreenadh@gmail.com} }

\date{}

\maketitle

\begin{abstract}

\noi This article concerns about the existence and multiplicity of weak solutions for the following nonlinear doubly nonlocal problem with critical nonlinearity in the sense of Hardy-Littlewood-Sobolev inequality
\begin{equation*}
\left\{
\begin{split}
(-\De)^su &= \la |u|^{q-2}u + \left(\int_{\Om}\frac{|v(y)|^{2^*_\mu}}{|x-y|^\mu}~\mathrm{d}y\right) |u|^{2^*_\mu-2}u\; \text{in}\; \Om\\
 (-\De)^sv &= \delta |v|^{q-2}v + \left(\int_{\Om}\frac{|u(y)|^{2^*_\mu}}{|x-y|^\mu}~\mathrm{d}y \right)  |v|^{2^*_\mu-2}v \; \text{in}\; \Om\\
 u &=v=0\; \text{in}\; \mb R^n\setminus\Omega,
\end{split}
\right.
\end{equation*}
where $\Om$ is a smooth bounded domain in $\mb R^n$, $n >2s$, $s \in (0,1)$, $(-\De)^s$ is the well known fractional Laplacian, $\mu \in (0,n)$, $2^*_\mu = \displaystyle\frac{2n-\mu}{n-2s}$ is the upper critical exponent in the Hardy-Littlewood-Sobolev inequality, $1<q<2$ and $\la,\delta >0$ are real parameters. We study the fibering maps corresponding to the functional associated with $(P_{\la,\delta})$ and show that minimization over suitable subsets of Nehari manifold renders the existence of atleast two non trivial solutions of $(P_{\la,\delta})$ for suitable range of $\la$ and $\delta$.
\medskip

\noi \textbf{Key words:} Nonlocal operator, fractional Laplacian, Choquard equation, Hardy-Littlewood-Sobolev critical exponent.

\medskip

\noi \textit{2010 Mathematics Subject Classification:} 35R11, 35R09, 35A15.

\end{abstract}

\section{Introduction}
Let $\Om \subset \mb R^n$ be a bounded domain with smooth boundary $\partial \Om$ (at least $C^2$), $n>2s$ and $s \in (0,1)$. We consider the following nonlinear doubly nonlocal system with  critical nonlinearity: 
\begin{equation*}
(P_{\la,\delta})\left\{
\begin{split}
(-\De)^su &= \la |u|^{q-2}u + \left(\int_{\Om}\frac{|v(y)|^{2^*_\mu}}{|x-y|^\mu}~\mathrm{d}y\right) |u|^{2^*_\mu-2}u\; \text{in}\; \Om\\
 (-\De)^sv &= \delta |v|^{q-2}v + \left(\int_{\Om}\frac{|u(y)|^{2^*_\mu}}{|x-y|^\mu}~\mathrm{d}y \right)  |v|^{2^*_\mu-2}v \; \text{in}\; \Om\\
 u &=v=0\; \text{in}\; \mb R^n\setminus\Omega,
\end{split}
\right.
\end{equation*}
where $\Om$ is a smooth bounded domain in $\mb R^n$, $n >2s$, $s \in (0,1)$, $\mu \in (0,n)$, $2^*_\mu = \displaystyle\frac{2n-\mu}{n-2s}$ is the upper critical exponent in the Hardy-Littlewood-Sobolev inequality, $1<q<2$, $\la,\delta >0$ are real parameters and  $(-\De)^s$ is the fractional Laplace operator defined  as
{$$ (-\De)^s u(x) = 2C^n_s\mathrm{P.V.}\int_{\mb R^n} \frac{u(x)-u(y)}{\vert x-y\vert^{n+2s}}\,\mathrm{d}y$$
where $\mathrm{P.V.}$ denotes the Cauchy principal value and $C^n_s=\pi^{-\frac{n}{2}}2^{2s-1}s\frac{\Gamma(\frac{n+2s}{2})}{\Gamma(1-s)}$, $\Gamma$ being the Gamma function.} The fractional  Laplacian is the infinitesimal generator of {L\'evy} stable diffusion process and arise in anomalous diffusion in plasma, population dynamics, geophysical fluid dynamics, flames propagation, chemical reactions in liquids and American options in finance, see \cite{da} for instance. We also refer \cite{radu2} to readers for  {{a detailed study on variational methods for fractional elliptic problems.}}

 \noi In the local case, authors in \cite{BJS} studied the existence of of ground states for the nonlinear Choquard equation
\begin{equation}\label{cho1}
 -\De u + V(x)u = \left( \int_\Om \frac{|u(y)|^p}{|x-y|^{\mu}}~\mathrm{d}y \right)|u|^{p-2}u \; \text{ in } \mb R^n,
 \end{equation}
where $p>1$ and $n\geq 3$.
 Recently, Ghimenti, Moroz and Schaftingen \cite{GMS}  proved the existence of least action nodal solution for the problem
\[-\De u + u = (I_\alpha * |u|^2)u \; \text{in} \; \mb R^n,\]
where $\ast$ denotes the convolution and $I_\alpha$ denotes the Riesz potential. Further results related to Choquard equations can be found in the survey paper \cite{survey} and the references therein. Alves, Figueiredo and Yang \cite{AFY} proved existence of a nontrivial solution via penalization method for the following Choquard equation
\begin{equation*}
-\De u+V(x)u= (|x|^{-\mu}*F(u))f(u)\; \text{in}\; \mb R^n,
\end{equation*}
 where $0 <\mu < N,\; N = 3, \;V$ is a continuous real { valued function} and $F$ is the primitive of function $f$. In the nonlocal case, Choquard equations involving fractional Laplacian is a recent topic of research. Authors in \cite{avsq} obtained regularity, existence, nonexistence, symmetry
as well as decays properties for the problem
\[(-\De)^s u + \omega u = (|x|^{\alpha-n}\ast |u|^p)|u|^{p-2}u\; \text{in} \; \mb R^n, \]
where $\omega>0$, $p>1$ and $s \in (0,1)$. Fractional Choquard equations also known as  nonlinear fractional Schr\"{o}dinger equations with Hartree-type nonlinearity arise in  the study of mean field limit of weakly interacting molecules, physics of multi particle systems and the quantum mechanical theory, etc. These are recently studied by some  authors in \cite{chenliu,chosquas2,luxu}.\\

\noi { Concerning the boundary value problems involving the Choquard nonlinearity, the Brezis-Nirenberg type problem that is
\begin{equation*}
\begin{split}
-\De u = \la u + \left( \int_\Om \frac{|u|^{2*_\mu}}{|x-y|^\mu}~\mathrm{d}y \right)|u|^{2^*_\mu-2}u\; \text{in}\; \Om,\; u = 0 \;\text{in}\; \mb R^n \setminus \Om
\end{split}
\end{equation*}
where $\Om$ is bounded domain in $\mb R^n$, was studied by Gao and Yang in \cite{gao-yang}. They proved the existence, multiplicity and nonexistence results for a range of $\la$. Moreover, in \cite{gao-yang2} authors proved the existence results for a class of critical Choquard equations in critical case. Among the very recent works, we cite \cite{non-homo} where Shen, Gao and Yang obtained existence of multiple solutions for non-homogenous critical Choquard equation using the variational methods when $0<\la<\la_1$, where $\la_1$ denotes the first eigenvalue of $-\De$ with Dirichlet boundary condition. }\\

 \noi Coming to the system of equations, elliptic systems involving fractional Laplacian and homogeneous nonlinearity has been studied in \cite{GPS,Sqsn, Fan} using Nehari manifold techniques. Guo et al. in \cite{guo} studied  a nonlocal system involving fractional Sobolev critical exponent and fractional Laplacian.  We also cite \cite{choi,faria,ww} as some very recent works on the study of fractional elliptic systems.
 However there is not much literature available on fractional elliptic system involving Choquard type nonlinearity.
And fractional elliptic system with critical Choquard inequality has not been studied yet, to the best of our knowledge. \\

\noi In this present paper, we discuss the existence and multiplicity result for the problem $(P_{\la,\delta})$. We seek help of the Nehari manifold techniques where minimization over suitable components  of Nehari manifold provide the weak solution to the problem. We divide the problem into two cases that is $0<\mu\leq 4s$ and $\mu > 4s$ and show existence of atleast two solution while bounding the parameters $\la$ and $\delta$ optimally. The existence results in the first case is optimal in the sense of obtaining the constant $\Theta$ (defined in Lemma \ref{Theta-def-lem}). We also reach the expected first critical level that is 
\[I_{\la,\delta}(u_1,v_1)+  \frac{n-\mu+2s}{2n-\mu} \left(\frac{C_s^n \tilde S_s^H}{2} \right)^{\frac{2n-\mu}{n-\mu+2s}}\]
 where $(u_1,v_1)$ denote the first solution of $(P_{\la,\delta})$, in this case (see Lemma \ref{l^-<PScrit-lev}) analogously to the local setting case (refer Lemma $2.4$ in \cite{gao-yang2}). Whereas in the latter case, we obtain the multiplicity for a smaller range of $\la$ and $\delta$ that is $\Theta_0$ (defined in Theorem \ref{sec-sol}) as compared to $\Theta$. We use the blow up analysis involving the minimizers of the embeddings to achieve the goal. In the case $0<\mu\leq 4s$, our results are sharp in the sense that the restrictions on the parameters $\la$ and $\de$ are used only to show that Nehari set is a manifold. Moreover using an iterative scheme, regularity results known for nonlocal problems involving fractional laplacian and strong maximum principle, we show the existence of a positive solution (see Proposition \ref{positive-sol}).

{\begin{Theorem}\label{MT}
Assume $1<q<2$ and $0<\mu<n$ then there exists a positive constants $\Theta$ and $\Theta_0$ such that
\begin{enumerate}
\item if $\mu\leq 4s$ and $ 0< \la^{\frac{2}{2-q}}+ \delta^{\frac{2}{2-q}}< \Theta$, the system $(P_{\la,\delta})$ admits at least two nontrivial solutions,
\item if $\mu> 4s$ and $ 0< \la^{\frac{2}{2-q}}+ \delta^{\frac{2}{2-q}}< \Theta_0$, the system $(P_{\la,\delta})$ admits at least two nontrivial solutions.
\end{enumerate}
Moreover, there exists a positive solution for $(P_{\la,\delta})$.
\end{Theorem}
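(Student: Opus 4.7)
The plan is to apply the Nehari manifold fibering method. Work in the product space $X = X_0 \times X_0$, where $X_0 \subset H^s(\mb R^n)$ is the fractional Sobolev space of functions vanishing outside $\Om$. Introduce the energy functional
\[
I_{\la,\de}(u,v) = \tfrac12 \|(u,v)\|_X^2 - \tfrac{1}{q}\q - \tfrac{1}{2 \cdot 2^*_\mu} \int_\Om \int_\Om \frac{|u(x)|^{2^*_\mu}|v(y)|^{2^*_\mu}}{|x-y|^\mu}\, \mathrm{d}x\,\mathrm{d}y,
\]
whose critical points are weak solutions of $(P_{\la,\de})$. Define the Nehari manifold $\mc N_{\la,\de} = \{(u,v) \in X\setminus\{(0,0)\} : \ld I'_{\la,\de}(u,v),(u,v)\rd = 0\}$ and, via the sign of $\phi''(1)$ for the fibering map $\phi(t) = I_{\la,\de}(tu,tv)$, split it into $\mc N^+ \cup \mc N^0 \cup \mc N^-$. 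The first task is to identify the threshold $\Theta$ (respectively $\Theta_0$) on $\la^{2/(2-q)} + \de^{2/(2-q)}$ below which $\mc N^0 = \emptyset$; this turns $\mc N^\pm$ into $C^1$-submanifolds and ensures that any constrained critical point is a free critical point of $I_{\la,\de}$.

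The first solution is produced by minimization over $\mc N^+$. Since $1<q<2$, the level $c^+ := \inf_{\mc N^+} I_{\la,\de}$ is strictly negative, and minimizing sequences are bounded in $X$ by coercivity. Weak compactness, Brezis-Lieb splitting for the nonlocal Choquard term, and the compact embedding $X_0 \hookrightarrow L^q(\Om)$ combine to show that a weak limit $(u_1,v_1)$ realizes $c^+$ and belongs to $\mc N^+$; hence $(u_1,v_1)$ is a nontrivial weak solution of $(P_{\la,\de})$.

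The second solution is sought as a minimizer on $\mc N^-$, and this is where the main obstacle lies: controlling the Palais-Smale compactness level. A concentration-compactness argument shows that $I_{\la,\de}|_{\mc N^-}$ satisfies $(PS)_c$ precisely for $c$ strictly below
\[
c^* := I_{\la,\de}(u_1,v_1) + \frac{n-\mu+2s}{2n-\mu}\left(\frac{C_s^n \tilde S_s^H}{2}\right)^{\frac{2n-\mu}{n-\mu+2s}}.
\]
One must therefore prove $c^- := \inf_{\mc N^-} I_{\la,\de} < c^*$ by a test-function construction: take a suitably truncated and rescaled extremizer $U_\e$ of the fractional Hardy-Littlewood-Sobolev embedding centered at an interior point, form the perturbation $(u_1 + tU_\e, v_1 + tU_\e)$, project it onto $\mc N^-$ by selecting $t = t_\e > 0$, and expand the energy asymptotically in $\e$. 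The subcritical term $\tfrac{1}{q}\q$ provides the decisive negative correction that defeats the positive error coming from the Choquard cross interactions; when $\mu \le 4s$ this expansion works up to the threshold $\Theta$, while when $\mu > 4s$ the cross-interaction estimates are more delicate and force the stricter window $\Theta_0$. Once $c^- < c^*$ is established, Ekeland's principle on $\mc N^-$ delivers a bounded $(PS)_{c^-}$ sequence, which subconverges to a second nontrivial solution $(u_2,v_2) \in \mc N^-$, distinct from $(u_1,v_1)$ since $\mc N^+ \cap \mc N^- = \emptyset$.

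For the positivity statement, replace one solution by $(|u|,|v|)$; this preserves the nonlocal Choquard integral and does not increase the Gagliardo seminorm associated with $(-\De)^s$, so the pair remains on the same component of $\mc N_{\la,\de}$ and still attains the corresponding minimum. A Moser/De Giorgi iteration scheme using the fractional Sobolev embedding, combined with the Hardy-Littlewood-Sobolev inequality to control the convolution potential, yields an $L^\infty$ bound on the nonnegative solution. Known regularity theory for the fractional Laplacian then upgrades it to $C^s(\mb R^n)$, and the strong maximum principle for $(-\De)^s$ delivers strict positivity inside $\Om$.
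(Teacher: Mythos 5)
Your global strategy (Nehari decomposition with the threshold $\Theta$ forcing $\mc N^0_{\la,\de}=\emptyset$, first solution by minimization over $\mc N^+_{\la,\de}$, second over $\mc N^-_{\la,\de}$, absolute values plus maximum principle for positivity) is the paper's strategy, but the core of the multiplicity argument --- the strict energy estimate for the second solution --- is misdescribed in a way that leaves a genuine gap. For $\mu\le 4s$ the paper tests with $(u_1,v_1)+t(u_\e,u_\e)$, and the decisive \emph{negative} correction is not the subcritical term: it is the Choquard cross interaction between the bubble and the first solution, which is bounded below by $r_4\,\e^{\frac{n-2s}{2}}$ precisely because $(u_1,v_1)$ has first been shown to be positive and bounded away from zero on compact subsets (Proposition \ref{positive-sol}), and because $2^*_\mu\ge 2$ (equivalent to $\mu\le 4s$) makes the elementary convexity expansion of $B(u_1+tu_\e,v_1+tu_\e)$ valid. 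The subcritical term enters with the \emph{wrong} sign: it produces a positive error of order $\e^{\frac{(n-2s)\rho}{2}}$ that has to be absorbed, and it is the $\e^{\frac{n-2s}{2}}$ cross term that beats the $O(\e^{n-2s})$ error in the Gagliardo norm. Your proposal never invokes positivity of $(u_1,v_1)$ and attributes the gain to the $q$-term, so as written you have no term of order $\e^{\frac{n-2s}{2}}$ and no way to get strictly below $c_1=I_{\la,\de}(u_1,v_1)+\frac{n-\mu+2s}{2n-\mu}\left(\frac{C_s^n\tilde S_s^H}{2}\right)^{\frac{2n-\mu}{n-\mu+2s}}$.

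The case $\mu>4s$ is not just ``more delicate cross-interaction estimates'': there $2^*_\mu<2$, the convexity inequality behind the translated-bubble expansion fails, and the level $c_1$ is not reached at all. The paper switches to the untranslated test pair $(tu_\e,tu_\e)$ and compares with the strictly smaller level $c_0=\frac{n-\mu+2s}{2n-\mu}\left(\frac{C_s^n\tilde S_s^H}{2}\right)^{\frac{2n-\mu}{n-\mu+2s}}-D_0\bigl(\la^{\frac{2}{2-q}}+\de^{\frac{2}{2-q}}\bigr)$; only here does the subcritical term do the work, and only after choosing $\e=(\la^{\frac{2}{2-q}}+\de^{\frac{2}{2-q}})^{\frac{1}{n-2s}}$, which is exactly what generates the additional restriction $\Upsilon$ and hence $\Theta_0=\min\{\Theta,\Upsilon\}$; your proposal gives no mechanism producing the smaller threshold. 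Separately, you assert that $I_{\la,\de}$ restricted to $\mc N^-_{\la,\de}$ satisfies $(PS)_c$ for all $c<c_1$; this global compactness statement is never proved in the paper and is not needed: Theorem \ref{sec-sol} instead runs a direct fibering-map case analysis ($t_2<1$, $t_2\ge1$ with $d>0$, $t_2\ge1$ with $d=0$) on a minimizing sequence in $\mc N^-_{\la,\de}$, using only the strict bounds $l^-_{\la,\de}<c_1$ (resp. $l^-_{\la,\de}<c_0$) and the lower bound of Lemma \ref{unif-lwr-bd}. Your positivity sketch (Moser-type iteration, regularity, strong maximum principle) is a reasonable alternative to the paper's comparison scheme built on Riesz-potential estimates, but note that in the paper positivity is not an afterthought: it must be established \emph{before} the second-solution estimate in the case $\mu\le4s$, since the cross-term lower bound depends on it.
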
}

\begin{Remark}
We remark that the solution obtained for $(P_{\la,\delta})$ (other than the positive solution) is not even semi trivial. The proof follows along the same line as section $5$(pp. $841$) of \cite{chensquas}.
\end{Remark}

\noi Our paper is organized as follows: Section 2 contains the functional setting and { various asymptotic estimates involving minimizers of  best constants}.  We analyse the fibering maps associated to the Nehari manifold in section 3. Lastly, section 4 contains the proof of main result where we show the existence of atleast two non trivial solutions.

\section{Function Spaces and some asymptotic estimates}
Consider the function space $H^s(\Om)$ as the usual fractional Sobolev space $W^{s,2}(\Om)$ defined by
\[H^s(\Om)= \left\{u \in L^2(\Om): \; \int_{\Om}\int_{\Om}\frac{|u(x)-u(y)|^2}{|x-y|^{n+2s}}\mathrm{d}x\mathrm{d}y < +\infty\right\}.\]
Setting $Q:= \mb R^{2n} \setminus (\mc C \Om \times \mc C \Om)$ where $\mc C \Om = \mb R^n \setminus \Om $, we define the Banach space
\[X:= \left\{u:\mb R^n \to \mb R \text{ measurable }:\; u \in L^2(\Om), \int_{Q}\frac{|u(x)-u(y)|^2}{|x-y|^{n+2s}}\mathrm{d}x\mathrm{d}y < +\infty\right\} \]
with the norm defined as
\[\|u\|_X: = \|u\|_{L^2(\Om)}+ \left(\int_{Q}\frac{|u(x)-u(y)|^2}{|x-y|^{n+2s}}\mathrm{d}x\mathrm{d}y \right)^{\frac12} {= \|u\|_{L^2(\Om)}+ \left(\frac{1}{C^n_s} \int_{\Om}u(-\De)^su~\mathrm{d}x\mathrm{d}y\right)^{\frac12}.} \]
If we set $X_0:= \{u \in X:\; u=0 \;\text{in}\; \mb R^n\setminus \Om\}$, then it can be shown that $X_0$ forms a Hilbert space with the inner product
\[\langle u, v\rangle = \int_{Q}\frac{(u(x)-u(y))(v(x)-v(y))}{|x-y|^{n+2s}}\mathrm{d}x \mathrm{d}y\]
for $u,v \in X_0$ and thus the corresponding norm is
\[ \|u\|_{X_0}=\|u\|:= \left(\int_{Q}\frac{|u(x)-u(y)|^2}{|x-y|^{n+2s}}\mathrm{d}x\mathrm{d}y \right)^{\frac12}.\]
Then $X_0$ can be equivalently considered as completion of $C^\infty_0(\Om)$ under the norm $\|\cdot\|_{X}$. It holds that $X_0 \hookrightarrow L^r(\Om)$
continuously for $r \in [1,2^*_s]$ and compactly for $r \in [1,2^*_s)$, where $2^*_s =\displaystyle\frac{2n}{n-2s}$. Now consider the product space $Y:= X_0\times X_0$ endowed with the norm $\|(u,v)\|^2:=\|u\|^2+\|v\|^2$. Before defining the weak solution for $(P_{\la,\delta})$, we need to certify that whenever $u \in X_0$, the term
\[\int_{\Om}(|x|^{-\mu}\ast|u|^{2^*_\mu})|u|^{2^*_\mu}\mathrm{d}x = \int_\Om \int_\Om \frac{|u(x)|^{2^*_\mu}|v(y)|^{2^*_\mu}}{|x-y|^\mu}~\mathrm{d}x \mathrm{d}y\]
is well defined. This is certified by the following well known Hardy-Littlewood-Sobolev inequality.
 \begin{Proposition}\label{HLS}
(\textbf {Hardy-Littlewood-Sobolev inequality}) {[pp. 106, Theorem 4.3, \cite{lieb}]} Let $t,r>1$ and $0<\mu<n $ with $1/t+\mu/n+1/r=2$, $f \in L^t(\mathbb R^n)$ and $h \in L^r(\mathbb R^n)$. There exists a sharp constant $C(t,n,\mu,r)$, independent of $f,h$ such that
 \begin{equation}\label{HLSineq}
 \int_{\mb R^n}\int_{\mb R^n} \frac{f(x)h(y)}{|x-y|^{\mu}}\mathrm{d}x\mathrm{d}y \leq C(t,n,\mu,r)\|f\|_{L^t(\mb R^n)}\|h\|_{L^r(\mb R^n)}.
 \end{equation}
{ If $t =r = \textstyle\frac{2n}{2n-\mu}$ then
 \[C(t,n,\mu,r)= C(n,\mu)= \pi^{\frac{\mu}{2}} \frac{\Gamma\left(\frac{n}{2}-\frac{\mu}{2}\right)}{\Gamma\left(n-\frac{\mu}{2}\right)} \left\{ \frac{\Gamma\left(\frac{n}{2}\right)}{\Gamma(n)} \right\}^{-1+\frac{\mu}{n}}.  \]
 In this case there is equality in \eqref{HLSineq} if and only if $f\equiv (constant)h$ and
 \[h(x)= A(\gamma^2+ |x-a|^2)^{\frac{-(2n-\mu)}{2}}\]
 for some $A \in \mathbb C$, $0 \neq \gamma \in \mathbb R$ and $a \in \mathbb R^n$.}
 \end{Proposition}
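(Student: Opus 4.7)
The plan is to establish the inequality first in a non-sharp form by reduction to symmetric-decreasing data and weak-type analysis, then in the conformal diagonal case $t=r=2n/(2n-\mu)$ to pin down the sharp constant and the extremizers via Lieb's competing-symmetries method.

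\textbf{General bound.} Replacing $f,h$ by $|f|,|h|$ only enlarges the left-hand side, so we may assume $f,h\ge 0$. Since $|x-y|^{-\mu}$ is a decreasing function of $|x-y|$, the Riesz rearrangement inequality gives
$$\int_{\mathbb R^n}\!\!\int_{\mathbb R^n}\frac{f(x)h(y)}{|x-y|^\mu}\,dx\,dy\ \le\ \int_{\mathbb R^n}\!\!\int_{\mathbb R^n}\frac{f^{\ast}(x)h^{\ast}(y)}{|x-y|^\mu}\,dx\,dy,$$
with $\|f^{\ast}\|_{L^t}=\|f\|_{L^t}$ and similarly for $h$, so we may further assume $f,h$ are symmetric decreasing. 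The kernel $K(x)=|x|^{-\mu}$ belongs to the weak Lebesgue space $L^{n/\mu,\infty}(\mathbb R^n)$, so by Marcinkiewicz interpolation (the weak Young inequality) convolution with $K$ maps $L^t\to L^{s'}$ boundedly, where $1/s'=1/t+\mu/n-1$. The hypothesis $t,r>1$ together with $1/t+\mu/n+1/r=2$ gives $1<t<n/\mu$, so the endpoints are avoided. Applying Hölder with exponents $(s',r)$ satisfying $1/s'+1/r=1$ yields the inequality with a finite (not yet sharp) constant.

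\textbf{Sharp constant in the diagonal case.} In the conformal case $t=r=2n/(2n-\mu)$, I would run Lieb's competing-symmetries argument. Using inverse stereographic projection $\mathbb R^n\to S^n$ and absorbing the Jacobian into the functions so that $L^t$ norms are preserved, the bilinear form becomes a constant multiple of $\iint_{S^n\times S^n} F(\xi)H(\eta)|\xi-\eta|^{-\mu}\,d\sigma(\xi)\,d\sigma(\eta)$, exhibiting the enlarged conformal symmetry group $O(n+1,1)$. Let $R$ denote symmetric-decreasing rearrangement on $\mathbb R^n$ and let $U$ be the non-isotropic conformal map implemented as a rotation on $S^n$ taking the north pole to an equatorial point, pulled back to $\mathbb R^n$. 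Both operations preserve the $L^t$ norm and can only increase the bilinear ratio. Starting from any $f_0$, the iterates $(UR)^k f_0$ converge in $L^t$ to the unique joint fixed point (up to scaling), namely $h_0(x)=(1+|x|^2)^{-(2n-\mu)/2}$, because no other symmetric-decreasing function is compatible with a genuine non-isotropic conformal rotation. Substituting $f=h=h_0$ and computing (conformally translating one variable to $0$, then evaluating a one-dimensional Beta integral) yields the explicit expression for $C(n,\mu)$.

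\textbf{Equality cases.} Rigidity follows from the strict versions of the two ingredients. By Burchard's theorem, equality in the Riesz step forces $f,h$ to be symmetric-decreasing about a common center. The competing-symmetries fixed-point analysis then forces any extremal profile to be a conformal image of $h_0$, i.e., of the form $A(\gamma^2+|x-a|^2)^{-(2n-\mu)/2}$. For the proportionality $f\equiv(\text{const})\,h$, one exploits the fact that $|x|^{-\mu}$ has Fourier transform a positive multiple of $|\xi|^{\mu-n}$, so the kernel is positive definite; the resulting Cauchy-Schwarz-type inequality $\mathcal I(f,h)\le \mathcal I(f,f)^{1/2}\mathcal I(h,h)^{1/2}$ is strict unless $\widehat f$ is a positive multiple of $\widehat h$ a.e.

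\textbf{Main obstacle.} The hard step is the sharp-constant portion: proving $L^t$-convergence of the competing-symmetries iteration and identifying $h_0$ as the unique common fixed point. Symmetric-decreasing rearrangement is not $L^t$-continuous in general, so compactness of the orbit must be extracted from the Brezis-Lieb lemma combined with conformal absorption of dilations to rule out mass escape to infinity. The rearrangement-rigidity result of Burchard is likewise a nontrivial input for the equality characterization.
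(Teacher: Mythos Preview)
The paper does not prove this proposition at all: it is quoted verbatim as a known result from Lieb--Loss, \emph{Analysis} (Theorem~4.3, p.~106), and is used as a black box throughout. So there is no ``paper's own proof'' to compare against.

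That said, your sketch is a faithful outline of the argument actually given in the cited reference. The non-sharp bound via Riesz rearrangement plus weak Young/Marcinkiewicz, the competing-symmetries iteration on $S^n$ to identify the sharp constant in the diagonal case $t=r=2n/(2n-\mu)$, and the equality analysis combining rearrangement rigidity with positive-definiteness of the Riesz kernel are exactly the ingredients in Lieb--Loss (and in Lieb's original 1983 paper). Your identification of the convergence of the competing-symmetries iterates as the delicate step is also accurate; this is where the Helly-type compactness and Brezis--Lieb arguments enter in the original proof. One minor correction: Burchard's theorem is not strictly needed for the equality case here, since in the diagonal conformal setting the competing-symmetries fixed-point argument already forces the extremal shape; Burchard is more relevant for non-diagonal or non-conformal rearrangement problems. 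Otherwise the outline is sound, and for the purposes of this paper simply citing the result, as the authors do, is entirely appropriate.
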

 \begin{Remark}
For $u \in {H^s(\mb R^n)}$, if we let $f = h= |u|^p$ then by Hardy-Littlewood-Sobolev inequality,
\[ \int_{\mb R^n}\int_{\mb R^n} \frac{|u(x)|^p|u(y)|^p}{|x-y|^{\mu}}\mathrm{d}x\mathrm{d}y\]
 is well defined for all $p$ satisfying
 \[2_\mu:=\left(\frac{2n-\mu}{n}\right)\leq p \leq \left(\frac{2n-\mu}{n-2s}\right):= 2^*_\mu.\]
\end{Remark}
\noi Next result is a basic inequality whose proof can be worked out in a similar manner as proof of Proposition 3.2(3.3) of \cite{GhSc}.
\begin{Lemma}\label{ineq}
For $u,v \in L^{\frac{2n}{2n-\mu}}(\mb R^n)$, we have
\[\int_{\mb R^n }\int_{\mb R^n}\frac{|u(x)|^p|v(y)|^p}{|x-y|^\mu}~dxdy \leq \left(\int_{\mb R^n}\int_{ \mb R^n}\frac{|u(x)|^p|u(y)|^p}{|x-y|^\mu}~dxdy \right)^{\frac12}  \left(\int_{\mb R^n}\int_{ \mb R^n}\frac{|v(x)|^p|v(y)|^p}{|x-y|^\mu}~dxdy \right)^{\frac12},\]
where $\mu\in (0,n)$ and $p \in [2_\mu,2^*_\mu]$.
\end{Lemma}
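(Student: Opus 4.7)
The plan is to view $B(f,g) := \int_{\mb R^n}\int_{\mb R^n} \frac{f(x)\, g(y)}{|x-y|^\mu}\,\mathrm{d}x\,\mathrm{d}y$, applied to $f = |u|^p$ and $g = |v|^p$, as an inner product with respect to a positive-definite bilinear form on $L^{\frac{2n}{2n-\mu}}(\mb R^n)$, so that the desired estimate reduces to the Cauchy--Schwarz inequality.

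The key tool is the semigroup property of the Riesz kernels. Setting $\alpha = (n-\mu)/2 \in (0, n/2)$ (admissible since $0 < \mu < n$), the convolution of $|x|^{\alpha - n}$ with itself is a positive constant multiple of $|x|^{2\alpha - n} = |x|^{-\mu}$. Explicitly, there is $c_\mu > 0$ with
\[ \frac{1}{|x-y|^\mu} \;=\; c_\mu \int_{\mb R^n} \frac{\mathrm{d}z}{|x-z|^{(n+\mu)/2}\,|y-z|^{(n+\mu)/2}}, \qquad x \neq y. \]
Substituting this identity into $B(f,g)$ and exchanging the order of integration (by Tonelli, since every integrand is nonnegative) gives
\[ B(f,g) \;=\; c_\mu \int_{\mb R^n} F(z)\, G(z)\, \mathrm{d}z, \qquad F(z) := \int_{\mb R^n} \frac{f(x)}{|x-z|^{(n+\mu)/2}}\,\mathrm{d}x, \]
with $G$ defined from $g$ analogously. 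Taking $g = f$ in the same computation yields $B(f,f) = c_\mu \|F\|_{L^2(\mb R^n)}^2$, and similarly $B(g,g) = c_\mu \|G\|_{L^2(\mb R^n)}^2$. Cauchy--Schwarz in $L^2(\mb R^n)$ applied to $F$ and $G$ then gives
\[ B(f,g) \;\le\; c_\mu \|F\|_{L^2}\, \|G\|_{L^2} \;=\; B(f,f)^{1/2}\, B(g,g)^{1/2}, \]
which is precisely the claimed inequality.

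There is no substantive obstacle. The only points worth monitoring are that $F, G \in L^2(\mb R^n)$ (equivalently, $B(f,f), B(g,g) < \infty$) so that the final Cauchy--Schwarz step is licit, and that the intermediate application of Tonelli is well-posed. Both conditions follow from the Hardy--Littlewood--Sobolev inequality of Proposition~\ref{HLS} as soon as $|u|^p, |v|^p \in L^{\frac{2n}{2n-\mu}}(\mb R^n)$; for $p$ in the admissible range $[2_\mu, 2^*_\mu]$ this membership is obtained from the hypothesis on $u, v$ via interpolation in Lebesgue spaces. This is essentially the strategy carried out for Proposition~3.2 of \cite{GhSc}.
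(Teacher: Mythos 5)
Your proof is correct and is essentially the argument the paper gives: the paper likewise writes $|x-y|^{-\mu}$ via the Riesz potential $I_{n-\mu}$, uses the semigroup identity $I_{n-\mu}=I_{\frac{n-\mu}{2}}\ast I_{\frac{n-\mu}{2}}$ to express the double integral as $\int (I_{\frac{n-\mu}{2}}\ast|u|^p)(I_{\frac{n-\mu}{2}}\ast|v|^p)\,\mathrm{d}x$, and concludes by Cauchy--Schwarz in $L^2$. Your added remarks on Tonelli and on finiteness via the Hardy--Littlewood--Sobolev inequality only make explicit what the paper leaves implicit.
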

\begin{proof}
{We recall the semigroup property of the Riesz potential which states that if $I_\alpha: \mb R^n \to \mb R$ denotes the Riesz potential given by
\[I_\alpha = \frac{A_\alpha}{|x|^{n-\alpha}},\; \text{where} \; A_\alpha = \frac{\Gamma\left( \frac{n-\alpha}{2}\right)}{\Gamma\left(\frac{\alpha}{2}\right) \pi^{n/2}2^\alpha}.\]
Then $I_\alpha$ satisfies $I_\alpha = I_{\alpha/2}\ast I_{\alpha/2}$. Using this alongwith H\"{o}lder's inequality we obtain
\begin{equation*}
\begin{split}
&\int_{\mb R^n }\int_{\mb R^n}\frac{|u(x)|^p|v(y)|^p}{|x-y|^\mu}~dxdy \\
&= \frac{1}{A_{n-\mu}}\int_{\mb R^n}(I_{n-\mu}\ast |u|^p)|v|^p\mathrm{d}x = \frac{1}{A_{n-\mu}}\int_{\mb R^n}(I_{\frac{n-\mu}{2}}\ast |u|^p)(I_{\frac{n-\mu}{2}} \ast |v|^p)\mathrm{d}x\\
& \leq \frac{1}{A_{n-\mu}}\left(\int_{\mb R^n}(I_{\frac{n-\mu}{2}}\ast |u|^p)^2\mathrm{d}x\right)^{1/2} \left(\int_{\mb R^n}(I_{\frac{n-\mu}{2}} \ast |v|^p)^2\mathrm{d}x\right)^{1/2}\\
& = \left(\int_{\mb R^n}\int_{ \mb R^n}\frac{|u(x)|^p|u(y)|^p}{|x-y|^\mu}~dxdy \right)^{\frac12}  \left(\int_{\mb R^n}\int_{ \mb R^n}\frac{|v(x)|^p|v(y)|^p}{|x-y|^\mu}~dxdy \right)^{\frac12}
\end{split}
\end{equation*}.\hfill{\QED}}
\end{proof}

\noi Therefore, it easily follows using Lemma \ref{ineq} that for every $(u,v)\in Y$,
$\displaystyle \int_{\Om}(|x|^{-\mu}\ast|u|^{2^*_\mu})|v|^{2^*_\mu}\mathrm{d}x < +\infty$.
In the context of Hardy- Littlewood-Sobolev inequality that is Proposition \ref{HLS}, for any $u \in X_0$ we get a constant $C>0$ such that
\begin{equation}\label{ineq1}
\int_{\Om}(|x|^{-\mu}\ast|u|^{2^*_\mu})|u|^{2^*_\mu}\mathrm{d}x=\int_{\Om}\int_{\Om}\frac{|u(x)|^{2^*_\mu}|u(y)|^{2^*_\mu}}{|x-y|^\mu}dxdy \leq  C\|u\|_{L^{2^*_s}(\Om)}^{22^*_\mu}.
\end{equation}
For notational convenience, if $u, v \in X_0$ we set
\[ B(u,v):= \int_{\Om}(|x|^{-\mu}\ast|u|^{2^*_\mu})|v|^{2^*_\mu}. \]
\begin{Definition}
We say that $(u,v)\in Y$ is a weak solution to $(P_{\la,\delta})$ if for every $(\phi,\psi)\in Y$, it satisfies
\begin{equation*}
\begin{split}\label{weak-sol}
{C_s^n}(\langle u, \phi\rangle + \langle v,\psi\rangle) &= \int_{\Om}(\la |u|^{q-2}u\phi+\delta |v|^{q-2}v\psi)\mathrm{d}x\\
&\quad+ \int_{\Om}(|x|^{-\mu}\ast|v|^{2^*_\mu})|u|^{2^*_\mu-2}u\phi~\mathrm{d}x +  \int_{\Om}(|x|^{-\mu}\ast|u|^{2^*_\mu})|v|^{2^*_\mu-2}v\psi~\mathrm{d}x.
 \end{split}
\end{equation*}
\end{Definition}
Equivalently, if we define the functional $I_{\la,\delta}:Y\to \mb R$ as
\[I_{\la,\delta}(u):= \frac{{C_s^n} }{2} \|(u,v)\|^2-\frac{1}{q}\int_{\Om}(\la |u|^q+\delta |v|^q)-\frac{2}{22^*_\mu}B(u,v) \]
then the critical points of $I_{\la,\delta}$ correspond to the weak solutions of $(P_{\la,\delta})$. A direct computation leads to $I_{\la,\delta} \in C^1(Y,\mb R)$ such that for any $(\phi,\psi) \in Y$
\begin{equation}\label{Iprime}
\begin{split}
(I_{\la,\delta}^\prime(u,v), (\phi,\psi))&= {C_s^n}(\langle u, \phi\rangle+ \langle v,\psi\rangle) - \int_{\Om}(\la |u|^{q-2}u\phi+\delta |v|^{q-2}v\psi)~\mathrm{d}x
\\
&\quad- \int_{\Om}(|x|^{-\mu}\ast|v|^{2^*_\mu})|u|^{2^*_\mu-2}u\phi~\mathrm{d}x -  \int_{\Om}(|x|^{-\mu}\ast|u|^{2^*_\mu})|v|^{2^*_\mu-2}v\psi~\mathrm{d}x.
\end{split}
\end{equation}

We define
 \begin{equation*}
 {S_s} = \inf_{u \in X_0\setminus \{0\}} \displaystyle\frac{\displaystyle\int_{\mb R^{2n}} \frac{|u(x)-u(y)|^2}{|x-y|^{n+2s}}{\,\mathrm{d}x\mathrm{d}y}}{\displaystyle\left(\int_{\mb R^n}|u|^{2^*_s}\,\mathrm{d}x\right)^{2/2^*_s}} = \inf_{u \in X_0\setminus \{0\}} \frac{\|u\|^2}{\|u\|_{L^{2^*_s}(\mb R^n)}^2} .
 \end{equation*}
\noi Consider the family of functions $\{U_{\epsilon}\}$ defined as
\begin{equation}\label{minimizers}
 U_{\epsilon}(x) = \epsilon^{-\frac{(n-2s)}{2}}\; u^*\left(\frac{x}{\epsilon}\right),\; x \in \mb R^n
 \end{equation}
where $u^*(x) = \bar{u}\left(\frac{x}{S_s^{\frac{1}{2s}}}\right),\; \bar{u}(x) = \frac{\tilde{u}(x)}{{\|\tilde u \|_{L^{2^*_s}(\mb R^n)}}}$ and $\tilde{u}(x)= \alpha(\beta^2 + |x|^2)^{-\frac{n-2s}{2}}$ with $\alpha \in \mb R \setminus \{0\}$ and $ \beta >0$ are fixed constants. Then for each $\epsilon > 0$, $U_\epsilon$ satisfies
\[ (-\De)^su = |u|^{2^*_s-2}u \; \;\text{in} \; \mb R^n \]
and verifies the equality
\begin{equation}\label{eu-lag}
{\int_{\mb R^n} \int_{\mb R^n} \frac{|U_\epsilon(x)-U_{\epsilon}(y)|^2}{|x-y|^{n+2s}}\,\mathrm{d}x\mathrm{d}y =\int_{\mb R^n} |U_\epsilon|^{2^*_s} \,\mathrm{d}x}= {S_s^{\frac{n}{2s}}}.
\end{equation}
For a proof, we refer to \cite{sv}. Next, in spirit of the inequality \eqref{ineq1} we define the best constant
 \begin{equation*}
 S_s^H := \inf\limits_{u \in X_0\setminus\{0\}} \displaystyle\frac{\displaystyle\int_{\mb R^{2n}} \frac{|u(x)-u(y)|^2}{|x-y|^{n+2s}}{\,\mathrm{d}x\mathrm{d}y}}{\displaystyle\left(\int_{\mb R^n} (|x|^{-\mu}\ast|u|^{2^*_\mu})|u|^{2^*_\mu}\mathrm{d}x\right)^{\frac{1}{2^*_{\mu}}}}= \inf\limits_{u \in X_0\setminus\{0\}} \frac{\|u\|^2}{B(u,u)^{\frac{1}{2^*_\mu} }}.
 \end{equation*}
\begin{Lemma}\label{S-H-minimizers}
{ The constant $S_s^H$ is achieved by $u$ if and only if $u$ is of the form
\[C\left( \frac{t}{t^2+|x-x_0|^2}\right)^{\frac{n-2s}{2}}, \; \; x \in \mb R^n \]
for some $x_0 \in \mb R^n$, $C>0$ and $t>0$. Moreover,
 \begin{equation}\label{relation}
S^H_s = \frac{S_s}{C(n,\mu)^{\frac{1}{{2^*_{\mu}}}}}.
\end{equation}}
 \end{Lemma}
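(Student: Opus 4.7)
The plan is to sandwich $S_s^H$ between bounds coming from two sharp inequalities and then observe that both are simultaneously saturated by the explicit family of extremals \eqref{minimizers}.

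First I would derive the lower bound by applying the Hardy-Littlewood-Sobolev inequality of Proposition \ref{HLS} with the choice $t = r = \frac{2n}{2n-\mu}$ and $f = h = |u|^{2^*_\mu}$. The algebraic identity $2^*_\mu \cdot \frac{2n}{2n-\mu} = 2^*_s$ guarantees $|u|^{2^*_\mu} \in L^{\frac{2n}{2n-\mu}}(\mb R^n)$ whenever $u \in X_0$, and the sharp HLS bound yields
\[
B(u,u) \le C(n,\mu)\,\|u\|_{L^{2^*_s}(\mb R^n)}^{2\cdot 2^*_\mu}.
\]
Raising to the power $1/2^*_\mu$ and then using the defining inequality $S_s\|u\|_{L^{2^*_s}}^2 \le \|u\|^2$ to eliminate the $L^{2^*_s}$-norm produces, after taking the infimum, $S_s^H \ge S_s/C(n,\mu)^{1/2^*_\mu}$.

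For the matching upper bound and attainment I would test the quotient defining $S_s^H$ on the family $U_\epsilon$ from \eqref{minimizers}. By construction each $U_\epsilon$ has the form $\alpha'(\beta'^2+|x-a|^2)^{-(n-2s)/2}$ after translation and scaling, so $|U_\epsilon|^{2^*_\mu}$ has precisely the form $A(\gamma^2+|x-a|^2)^{-(2n-\mu)/2}$ demanded by the equality case in Proposition \ref{HLS}; at the same time, identity \eqref{eu-lag} says $U_\epsilon$ saturates the Sobolev inequality. Both steps of the sandwich thus become equalities on $U_\epsilon$, the quotient evaluates to $S_s/C(n,\mu)^{1/2^*_\mu}$, and this proves \eqref{relation} together with the attainment of $S_s^H$. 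The characterization of extremals then falls out by reversing the sandwich: any minimizer $u$ must produce equality in both HLS and Sobolev, so Proposition \ref{HLS} forces $|u|^{2^*_\mu}(x) = A(\gamma^2+|x-x_0|^2)^{-(2n-\mu)/2}$ for some $A>0$, $\gamma\neq 0$, $x_0 \in \mb R^n$, and taking the $2^*_\mu$-th root (using $(2n-\mu)/2^*_\mu = n-2s$) gives exactly $|u|(x) = C\bigl(t/(t^2+|x-x_0|^2)\bigr)^{(n-2s)/2}$ with $t = \gamma$.

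The main obstacle I expect is not the sandwich itself but the sharpness of the characterization. Extracting the $2^*_\mu$-th root only pins down $|u|$, so an additional step is needed to conclude that $u$ itself has the claimed shape (up to sign). The standard resolution is to observe that $|u|$ is admissible with $\| |u|\|\le \|u\|$ and $B(|u|,|u|)\ge B(u,u)$, so $|u|$ is itself a minimizer, satisfies the corresponding Euler-Lagrange equation, and hence is strictly positive via a strong maximum principle for $(-\De)^s$; this forces $u = \pm|u|$ and yields the form stated in the lemma.
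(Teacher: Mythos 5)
Your proposal is correct and follows essentially the same route as the paper: bound $S_s^H$ from below by combining the sharp Hardy-Littlewood-Sobolev inequality with the definition of $S_s$, and then observe that the Aubin--Talenti type functions saturate both inequalities simultaneously, which yields \eqref{relation} and the characterization of extremals via the HLS equality case. Your extra step handling the fact that the equality case only pins down $|u|$ (invoking that $|u|$ is again a minimizer and a maximum principle to fix the sign) is a refinement the paper's proof glosses over, but it is not a different approach.
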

 \begin{proof}
{ By the Hardy-Littlewood-Sobolev inequality we easily get that
 \[S^H_s\geq  \frac{S_s}{C(n,\mu)^{\frac{1}{{2^*_{\mu}}}}}.\]
 Also from Proposition \ref{HLS} we know that the inequality in \eqref{HLSineq} is an equality if and only if $u$ is of the form
\[C\left( \frac{t}{t^2+|x-x_0|^2}\right)^{\frac{n-2s}{2}}, \; \; x \in \mb R^n .\]
While we know that if $u$ is of this form then it also forms a minimizer for the constant $S_s$, thus we obtain the result and \eqref{relation} follows directly.\hfill{\QED}}
 \end{proof}

\noi We set
\[\tilde S_s^H = \inf_{(u,v) \in Y\setminus \{(0,0)\}} \frac{\|(u,v)\|^2 }{ \left( \int_{\Om}(|x|^{-\mu}\ast |u|^{2^*_\mu})|v|^{2^*_\mu}~\mathrm{d}x\right)^{\frac{1}{2^*_\mu}}} = \inf_{(u,v) \in Y\setminus \{(0,0)\}} \frac{\|(u,v)\|^2}{B(u,v)^{\frac{1}{2^*_\mu}}} \]
and show the relation between $S_s^H$ and $\tilde S_s^H$ in the following lemma. The argument follows closely the line of Lemma $3.3$ of \cite{chensquas} but for sake of completeness, we include it here.

\begin{Lemma}\label{reltn}
There holds $\tilde S_s^H = 2 S_s^H$.
\end{Lemma}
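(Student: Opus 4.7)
The plan is to establish the two inequalities $\tilde S_s^H \le 2 S_s^H$ and $\tilde S_s^H \ge 2 S_s^H$ separately, the first by testing with a diagonal pair and the second by combining Lemma \ref{ineq} with the elementary AM--GM inequality.

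For the upper bound, I would pick, for arbitrary $\varepsilon>0$, some $w\in X_0\setminus\{0\}$ with
\[
\frac{\|w\|^2}{B(w,w)^{1/2^*_\mu}} \le S_s^H + \varepsilon,
\]
which exists by the definition of $S_s^H$ (the actual bubble minimizers from Lemma \ref{S-H-minimizers} do not sit inside $X_0$, so one has to use an almost-minimizer, or alternatively truncate/rescale a bubble; either way the infimum value is unchanged since the quotient is invariant under translations and scalings). Then the pair $(w,w) \in Y\setminus\{(0,0)\}$ satisfies $\|(w,w)\|^2 = 2\|w\|^2$ and $B(w,w)$ is unaffected, so
\[
\tilde S_s^H \;\le\; \frac{2\|w\|^2}{B(w,w)^{1/2^*_\mu}} \;\le\; 2(S_s^H+\varepsilon),
\]
and letting $\varepsilon\to 0$ gives $\tilde S_s^H \le 2 S_s^H$.

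For the lower bound, fix $(u,v)\in Y\setminus\{(0,0)\}$. By Lemma \ref{ineq} applied with $p=2^*_\mu$,
\[
B(u,v) \;\le\; B(u,u)^{1/2}\, B(v,v)^{1/2},
\]
so that $B(u,v)^{1/2^*_\mu} \le B(u,u)^{1/(2\cdot 2^*_\mu)} B(v,v)^{1/(2\cdot 2^*_\mu)}$. The definition of $S_s^H$ gives $\|u\|^2 \ge S_s^H B(u,u)^{1/2^*_\mu}$ and $\|v\|^2 \ge S_s^H B(v,v)^{1/2^*_\mu}$, and applying AM--GM to the two right-hand sides yields
\[
\|u\|^2+\|v\|^2 \;\ge\; S_s^H\bigl(B(u,u)^{1/2^*_\mu}+B(v,v)^{1/2^*_\mu}\bigr) \;\ge\; 2 S_s^H\, B(u,u)^{1/(2\cdot 2^*_\mu)} B(v,v)^{1/(2\cdot 2^*_\mu)} \;\ge\; 2 S_s^H\, B(u,v)^{1/2^*_\mu}.
\]
Dividing by $B(u,v)^{1/2^*_\mu}$ (which is positive unless both $u$ and $v$ vanish) and taking the infimum over $(u,v)\in Y\setminus\{(0,0)\}$ gives $\tilde S_s^H\ge 2 S_s^H$, completing the proof.

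There is no real obstacle here; the only minor care needed is handling the degenerate case where one of $u,v$ vanishes (in which case the quotient reduces to $\|u\|^2/B(u,u)^{1/2^*_\mu}\ge S_s^H$, trivially compatible with the bound $2S_s^H$ once one notes the infimum is attained in the limit by balancing both components) and, in the upper bound, the fact that the extremal bubble does not belong to $X_0$, which is standard to bypass by an $\varepsilon$-minimizer argument.
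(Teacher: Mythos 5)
Your proposal is correct and takes essentially the same route as the paper: a diagonal pair $(w,w)$ built from (almost-)minimizers of $S_s^H$ gives $\tilde S_s^H\le 2S_s^H$, and Lemma \ref{ineq} combined with the inequality between arithmetic and geometric means gives the reverse bound, which is exactly what the paper does, except that it implements the AM--GM step by rescaling one component of a minimizing sequence ($w_k=r_kv_k$ with $B(u_k,u_k)=B(w_k,w_k)$ and $r+r^{-1}\ge 2$) rather than applying AM--GM directly to an arbitrary pair as you do. Your closing remark about the degenerate case is slightly off (if, say, $v\equiv 0$ then $B(u,v)=0$, so the quotient is not $\|u\|^2/B(u,u)^{1/2^*_\mu}$ but is infinite, and such pairs simply do not compete in the infimum), but this does not affect the validity of the argument.
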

\begin{proof}
Let $\{g_k\} \subset X_0$ be a minimizing sequence for $S_s^H$. Let $r_1,r_2>0$ be specified later and set the sequences $u_k = r_1g_k$ and $v_k = r_2g_k$ in $X_0$. From the definition of $S_s^H$ we have
 \begin{equation}\label{reltn1}
 \tilde S_s^H \leq \left( \frac{r_1^2+r_2^2}{r_1r_2}\right) \left( \frac{\|g_k\|^2}{B(g_k,g_k)^{\frac{1}{2^*_\mu}}}\right)= \left( \frac{r_1}{r_2}+ \frac{r_2}{r_1}\right)   \left( \frac{\|g_k\|^2}{B(g_k,g_k)^{\frac{1}{2^*_\mu}}}\right).
 \end{equation}
Let us define the function $f:\mb R^+ \to \mb R^+$ by setting $f(x) = x+x^{-1}$. Then it is easy to see that $f$ attains its minimum at $x_0= 1$ with the minimum value $f(1)=2$. We choose $r_1,r_2$ in \eqref{reltn1} such that ${r_1}= {r_2}$ and letting $k \to \infty$ in \eqref{reltn1} we get
\begin{equation}\label{reltn2}
\tilde S_s^H \leq 2S_s^H.
\end{equation}
To prove the reverse inequality we consider the minimizing sequence $\{(u_k,v_k)\} \subset Y \setminus \{(0,0)\}$ for $\tilde S_s^H$. We set $w_k = r_kv_k$ for $r_k>0$ with $B(u_k,u_k)= B(w_k,w_k)$. This alongwith Lemma \ref{ineq} gives
\[B(u_k,w_k) \leq B(u_k,u_k)^{\frac{1}{2}} B(w_k,w_k)^{\frac{1}{2}} = B(u_k,u_k)= B(w_k,w_k).\]
Thus we obtain
\begin{align*}
\frac{\|(u_k,v_k)\|^2}{B(u_k,v_k)^{\frac{1}{2^*_\mu}}}  = r_k \frac{\|(u_k,v_k)\|^2}{B(u_k,w_k)^{\frac{1}{2^*_\mu}}} &\geq r_k \frac{\|u_k\|^2 }{B(u_k,u_k)^{\frac{1}{2^*_\mu}}} + r_k r_k^{-2} \frac{\|w_k\|^2 }{B(w_k,w_k)^{\frac{1}{2^*_\mu}}}\\
& \geq f(r_k) S_s^H \geq 2S_s^H.
\end{align*}
Now passing on the limit as $k \to \infty$ we get
\begin{equation}\label{reltn3}
2 S_s^H \leq \tilde S_s^H.
\end{equation}
Finally from \eqref{reltn2} and \eqref{reltn3} it follows that $S_s^H = 2 \tilde S_s^H$. \hfill{\QED}
\end{proof}

 We recall the definition of $U_\e$ from \eqref{minimizers}. Without loss of generality, we assume $0 \in \Om$ and fix $\delta>0$ such that $B_{4\delta}\subset \Om$. Let $\eta \in C^{\infty}(\mb R^n)$ be such that $0\leq \eta \leq 1$ in $\mb R^n$, $\eta \equiv 1$ in $B_{\delta}$ and $\eta \equiv 0$ in $\mb R^n \setminus B_{2\delta}$. For $\epsilon >0$, we denote by $u_{\epsilon}$ the following function
\[u_\epsilon(x) = \eta(x)U_{\epsilon}(x),\]
for $x \in \mb R^n$, where $U_\epsilon$ is defined in section 2. We have the following results for $u_\epsilon$ from Proposition $21$ and $22$ of \cite{sv}.
\begin{Proposition}\label{estimates}
Let $s\in(0,1)$ and $n>2s$. Then, the following estimates holds true as $\epsilon \rightarrow 0 $ \\
\begin{enumerate}
\item[(i)] $\ds \int_{\mb R^{2n}}\frac{|u_\epsilon(x)- u_\epsilon(y)|^2}{|x-y|^{n+2s}}~\mathrm{d}x\mathrm{d}y {\leq} S_s^{\frac{n}{2s}}+O(\epsilon^{n-2s})$,\\
\item[(ii)] $\ds \int_{\Om}|u_\epsilon|^{2^*_s}~\mathrm{d}x = S_s^{\frac{n}{2s}}+ O(\epsilon^n)$,\\
\item[(iii)] $$\int_{\Om}|u_{\epsilon}(x)|^2~\mathrm{d}x {\geq}
\left\{
    \begin{array}{ll}
        C_s\epsilon^{2s}+O(\epsilon^{n-2s}) & \mbox{if } n>4s \\
        C_s\epsilon^{2s}|\log \epsilon|+O(\epsilon^{2s}) & \mbox{if } n=4s \\
        C_s\epsilon^{n-2s}+O(\epsilon^{2s}) & \mbox{if } n<4s
    \end{array}\hspace{5.0cm}
\right.,$$
\end{enumerate}
for some positive constant $C_s$, depending on $s$.
\end{Proposition}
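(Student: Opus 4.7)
The plan is to reduce each of the three estimates to known properties of the unscaled profile $u^\ast$, namely that $u^\ast$ minimizes $S_s$ (so that the identity \eqref{eu-lag} holds) together with its explicit decay $u^\ast(x)\lesssim (1+|x|)^{-(n-2s)}$. A single change of variables $x=\epsilon y$ converts integrals of $U_\epsilon$ over a fixed neighborhood of $0$ into integrals of $u^\ast$ over the dilated region $B_{\delta/\epsilon}$, and the error contributed by the cutoff $\eta$ is localized to the annulus $\delta\le |x|\le 2\delta$, where $U_\epsilon$ is already very small.

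I would dispatch (ii) first, as it is essentially algebraic. Writing
\[
\int_\Omega |u_\epsilon|^{2^\ast_s}\,\mathrm{d}x = \int_{\mathbb R^n}|U_\epsilon|^{2^\ast_s}\,\mathrm{d}x - \int_{\mathbb R^n\setminus B_\delta}\bigl(1-\eta^{2^\ast_s}\bigr)|U_\epsilon|^{2^\ast_s}\,\mathrm{d}x,
\]
the first piece equals $S_s^{n/(2s)}$ by \eqref{eu-lag}, while rescaling in the second yields a tail integral $\int_{|y|\ge \delta/\epsilon}|u^\ast(y)|^{2^\ast_s}\mathrm{d}y = O(\epsilon^n)$, since $|u^\ast(y)|^{2^\ast_s}\lesssim |y|^{-2n}$ at infinity. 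For (iii) I would use the pointwise lower bound $u_\epsilon\ge U_\epsilon \mathbf 1_{B_\delta}$ and the identity $\int_{B_\delta}|U_\epsilon|^2\mathrm{d}x = \epsilon^{2s}\int_{B_{\delta/\epsilon}}|u^\ast(y)|^2\mathrm{d}y$. Since $|u^\ast(y)|^2\sim |y|^{-2(n-2s)}$ at infinity, the truncated $L^2$ integral of $u^\ast$ is finite and $=C_s+O((\epsilon/\delta)^{n-4s})$ when $n>4s$, grows like $|\log\epsilon|$ when $n=4s$, and grows like $(\delta/\epsilon)^{4s-n}$ when $n<4s$, which after multiplying by $\epsilon^{2s}$ reproduces the three cases of (iii).

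The genuine work lies in (i). My plan is to write $u_\epsilon(x)-u_\epsilon(y) = \eta(x)\bigl(U_\epsilon(x)-U_\epsilon(y)\bigr) + U_\epsilon(y)\bigl(\eta(x)-\eta(y)\bigr)$, square, and integrate against the kernel $|x-y|^{-(n+2s)}$. Symmetrizing in $x$ and $y$ reduces the diagonal term to
\[
\int_{\mathbb R^{2n}} \frac{|U_\epsilon(x)-U_\epsilon(y)|^2}{|x-y|^{n+2s}}\,\mathrm{d}x\,\mathrm{d}y = S_s^{n/(2s)},
\]
with error controlled by integrals of $U_\epsilon^2$ over $\mathbb R^n\setminus B_\delta$. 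The commutator-type term
\[
\int_{\mathbb R^{2n}} \frac{|U_\epsilon(y)|^2\,|\eta(x)-\eta(y)|^2}{|x-y|^{n+2s}}\,\mathrm{d}x\,\mathrm{d}y
\]
is estimated by splitting according to whether $|x-y|\le 1$ or $>1$, using $|\eta(x)-\eta(y)|\le \|\nabla\eta\|_\infty|x-y|$ on small scales and $|\eta(x)-\eta(y)|\le 2$ on large ones, together with the decay $U_\epsilon(y)\lesssim \epsilon^{(n-2s)/2}(\epsilon+|y|)^{-(n-2s)}$. Rescaling $y=\epsilon z$ turns this bound into $\epsilon^{n-2s}$ times a convergent integral, and the same rescaling handles the cross term via Cauchy–Schwarz.

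The main obstacle is this commutator estimate: one must exploit that $\nabla\eta$ is supported in the annulus $\delta\le|x|\le 2\delta$, which keeps $|x-y|$ bounded away from $0$ whenever $y$ is far from that annulus, so the kernel singularity is tamed while the decay of $U_\epsilon$ produces precisely the $\epsilon^{n-2s}$ gain. This is the standard fractional analogue of the Aubin–Brezis–Nirenberg cutoff computation, and I expect it to be executed exactly as in the proof of Propositions 21–22 of \cite{sv}.
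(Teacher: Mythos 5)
Your reductions for (ii) and (iii) are fine (and essentially what Propositions 21--22 of \cite{sv}, which the paper simply cites, do), but the key step of your plan for (i) is quantitatively wrong, and this is exactly where the real work lies. Consider the commutator term $\int_{\mb R^{2n}} |U_\epsilon(y)|^2|\eta(x)-\eta(y)|^2|x-y|^{-(n+2s)}\,\mathrm{d}x\,\mathrm{d}y$ on the region $y\in B_{\delta/2}$, $x\in \mb R^n\setminus B_{2\delta}$: there $|\eta(x)-\eta(y)|=1$, the $x$-integral of the kernel is a fixed constant of order $\delta^{-2s}$, and what remains is $\int_{B_{\delta/2}}U_\epsilon^2$, which by your own computation in (iii) is of order $\epsilon^{2s}$ when $n>4s$ (and $\epsilon^{2s}|\log\epsilon|$ when $n=4s$). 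So the claim that rescaling turns this term into ``$\epsilon^{n-2s}$ times a convergent integral'' fails precisely when $n\geq 4s$: one gets $\epsilon^{2s}\int_{B_{R/\epsilon}}|u^*|^2$, and the two assertions (prefactor $\epsilon^{n-2s}$, convergent integral) are never simultaneously true. Since $\epsilon^{2s}\gg\epsilon^{n-2s}$ for $n>4s$, your decomposition, with the diagonal term bounded by $S_s^{n/2s}$ and the cross term by a global Cauchy--Schwarz (which by itself only yields $O(\epsilon^{(n-2s)/2})$, or $O(\epsilon^{2s})$ after localizing), proves at best $S_s^{n/2s}+O(\epsilon^{2s})$, not the asserted $S_s^{n/2s}+O(\epsilon^{n-2s})$.

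The sharp bound is nevertheless true, because the large contribution you are double counting cancels: in the region above one has $u_\epsilon(y)=U_\epsilon(y)$ and $u_\epsilon(x)=0$, so $|u_\epsilon(x)-u_\epsilon(y)|^2=U_\epsilon(y)^2$, while $|U_\epsilon(x)-U_\epsilon(y)|^2=U_\epsilon(y)^2-2U_\epsilon(x)U_\epsilon(y)+U_\epsilon(x)^2$; the difference is controlled by $U_\epsilon(x)U_\epsilon(y)$ with $U_\epsilon(x)\lesssim \epsilon^{(n-2s)/2}|x|^{-(n-2s)}$, and this integrates against the kernel to $O(\epsilon^{n-2s})$. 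In your bookkeeping this cancellation is lost at the moment you estimate $\int\eta(x)^2|U_\epsilon(x)-U_\epsilon(y)|^2|x-y|^{-(n+2s)}$ by the full Gagliardo energy of $U_\epsilon$: on the dangerous region $\eta(x)^2=0$ while $|U_\epsilon(x)-U_\epsilon(y)|^2\approx U_\epsilon(y)^2$, so you discard a negative correction of size $\epsilon^{2s}$ which your (overestimated) commutator term then re-adds. The correct route --- the one carried out in Proposition 21 of \cite{sv}, to which the paper defers without proof --- is to compare $\int_{\mb R^{2n}}|u_\epsilon(x)-u_\epsilon(y)|^2|x-y|^{-(n+2s)}$ directly with the same quantity for $U_\epsilon$, splitting $\mb R^{2n}$ according to whether $x,y$ lie in $B_\delta$ or its complement and using $u_\epsilon=U_\epsilon$ on $B_\delta$, $0\leq u_\epsilon\leq U_\epsilon$ and the decay of $U_\epsilon$ outside $B_\delta$; equivalently, you must retain the $(1-\eta(x)^2)$ correction rather than estimate the three pieces of your product-rule expansion separately in absolute value. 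Note also that the sharp exponent matters for the paper: in Lemma \ref{l^-<PScrit-lev} the error from (i) must be $o(\epsilon^{(n-2s)/2})$, and an $O(\epsilon^{2s})$ error would not suffice once $n\geq 6s$.
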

\noi Using \eqref{relation}, Proposition \ref{estimates}(i) can be written as
\begin{equation}\label{esti-new}
\int_{\mb R^n}\frac{|u_\epsilon(x)- u_\epsilon(y)|^2}{|x-y|^{n+2s}}~\mathrm{d}x\mathrm{d}y \leq S_s^{\frac{n}{2s}}+O(\epsilon^{n-2s})=\left((C(n,\mu))^{\frac{n-2s}{2n-\mu}}S^H_s\right)^{\frac{n}{2s}}+ O(\epsilon^{n-2s}).
\end{equation}
\begin{Proposition}\label{estimates1}
The following estimates holds true:
\begin{equation*}
\left(\int_{\Om}\int_{\Om}\frac{|u_\epsilon(x)|^{2^*_{\mu}}|u_\epsilon(y)|^{2^*_{\mu}}}{|x-y|^{\mu}}~\mathrm{d}x\mathrm{d}y \right)^{\frac{n-2s}{2n-\mu}}\leq (C(n,\mu))^{\frac{n(n-2s)}{2s(2n-\mu)}} (S^H_s)^{\frac{n-2s}{2s}}+ {O(\epsilon^{n})},
\end{equation*}
and
\begin{equation*}
\left(\int_{\Om}\int_{\Om}\frac{|u_\epsilon(x)|^{2^*_{\mu}}|u_\epsilon(y)|^{2^*_{\mu}}}{|x-y|^{\mu}}~\mathrm{d}x\mathrm{d}y \right)^{\frac{n-2s}{2n-\mu}}\geq {\left((C(n,\mu))^{\frac{n}{2s}} (S^H_s)^{\frac{2n-\mu}{2s}}- O\left(\epsilon^{{2n-\mu}}\right)\right)^{\frac{n-2s}{2n-\mu}}.}
\end{equation*}
\end{Proposition}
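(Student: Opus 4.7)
For the upper bound I would apply Proposition \ref{HLS} with $t=r=\tfrac{2n}{2n-\mu}$ and $f=g=|u_\e|^{2^*_\mu}$. Since $2^*_\mu\cdot\tfrac{2n}{2n-\mu}=2^*_s$, this yields
\[
\int_\Om\!\int_\Om \frac{|u_\e(x)|^{2^*_\mu}|u_\e(y)|^{2^*_\mu}}{|x-y|^\mu}\,\mathrm{d}x\,\mathrm{d}y \;\leq\; C(n,\mu)\Bigl(\int_\Om|u_\e|^{2^*_s}\,\mathrm{d}x\Bigr)^{(2n-\mu)/n}.
\]
Plugging in $\int_\Om|u_\e|^{2^*_s}=S_s^{n/(2s)}+O(\e^n)$ from Proposition \ref{estimates}(ii), raising to the power $(n-2s)/(2n-\mu)$, expanding the binomial, and applying the identity $S_s=C(n,\mu)^{1/2^*_\mu}S_s^H$ from Lemma \ref{S-H-minimizers} gives the upper estimate; the combined exponent on $C(n,\mu)$ simplifies via $\tfrac{n-2s}{2n-\mu}+\tfrac{(n-2s)^2}{2s(2n-\mu)}=\tfrac{n(n-2s)}{2s(2n-\mu)}$ to precisely the exponent claimed.

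For the lower bound I would first exploit that $U_\e$ is itself an extremal for $S_s^H$: direct inspection shows $U_\e(x)=c_0\bigl(t/(t^2+|x|^2)\bigr)^{(n-2s)/2}$ with $t=\e\beta S_s^{1/(2s)}$, so the characterization of Lemma \ref{S-H-minimizers} applies. Scale-invariance of the quotient defining $S_s^H$ then yields $\|U_\e\|^2=S_s^H\,B(U_\e,U_\e)^{1/2^*_\mu}$; combining this with \eqref{eu-lag}, the identity $(n-2s)2^*_\mu=2n-\mu$, and once more $S_s=C(n,\mu)^{1/2^*_\mu}S_s^H$ gives the explicit value
\[
B(U_\e,U_\e)=C(n,\mu)^{n/(2s)}(S_s^H)^{(2n-\mu)/(2s)}.
\]
Because $u_\e=\eta U_\e\geq 0$ coincides with $U_\e$ on $B_\delta$, monotonicity of $B$ yields $B(u_\e,u_\e)\geq B(\chi_{B_\delta}U_\e,\chi_{B_\delta}U_\e)$, and what remains is an upper bound on the tail
\[
B(U_\e,U_\e)-B(\chi_{B_\delta}U_\e,\chi_{B_\delta}U_\e)\;\leq\;2\int_{B_\delta^c}U_\e^{2^*_\mu}(x)\bigl(|x|^{-\mu}*U_\e^{2^*_\mu}\bigr)(x)\,\mathrm{d}x.
\]

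The main obstacle is this tail estimate. Using the far-field asymptotics $u^*(z)\sim|z|^{-(n-2s)}$, the rescaling $y=\e z$ produces $(|x|^{-\mu}*U_\e^{2^*_\mu})(x)=O(\e^{\mu/2}|x|^{-\mu})$ uniformly on $|x|\geq\delta$, while $U_\e^{2^*_\mu}(x)=O(\e^{(2n-\mu)/2}|x|^{-(2n-\mu)})$ on the same set. Multiplying these two asymptotics and integrating $|x|^{-2n}$ on $B_\delta^c$ combines the exponents to give a tail of the required order, so that raising the resulting inequality $B(u_\e,u_\e)\geq B(U_\e,U_\e)-\text{tail}$ to the power $(n-2s)/(2n-\mu)$ produces the stated lower bound. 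Keeping these nested powers aligned with the claimed error order $O(\e^{2n-\mu})$, and checking that the dominated-convergence argument in the rescaled Riesz potential is genuinely uniform on $B_\delta^c$, is where I expect the bookkeeping to be most delicate.
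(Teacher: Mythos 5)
Your upper bound is exactly the paper's argument (HLS with $t=r=\frac{2n}{2n-\mu}$, Proposition \ref{estimates}(ii), and the relation \eqref{relation}), and your lower-bound strategy is also essentially the paper's: use Lemma \ref{S-H-minimizers} to see that $U_\e$ is an extremal, compute $B(U_\e,U_\e)=C(n,\mu)^{n/2s}(S_s^H)^{(2n-\mu)/(2s)}$ exactly as in \eqref{har-esti4}, restrict to $B_\delta$ where $u_\e=U_\e$, and control the tail. Your way of packaging the tail, namely $B(U_\e,U_\e)-B(\chi_{B_\delta}U_\e,\chi_{B_\delta}U_\e)\le 2\int_{B_\delta^c}U_\e^{2^*_\mu}\,(|x|^{-\mu}\ast U_\e^{2^*_\mu})\,\mathrm{d}x$ together with the uniform far-field bounds $(|x|^{-\mu}\ast U_\e^{2^*_\mu})(x)=O(\e^{\mu/2}|x|^{-\mu})$ and $U_\e^{2^*_\mu}(x)=O(\e^{(2n-\mu)/2}|x|^{-(2n-\mu)})$ on $|x|\ge\delta$, is cleaner than the paper's splitting of the mixed and exterior double integrals according to $|x-y|\lessgtr 1$, and both asymptotics you cite are correct.

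The genuine issue is the order you end up with. Your two asymptotics multiply to $\e^{\mu/2}\cdot\e^{(2n-\mu)/2}=\e^{n}$, and $\int_{B_\delta^c}|x|^{-2n}\,\mathrm{d}x<\infty$, so your tail is $O(\e^{n})$ — but $\e^{n}$ is \emph{not} $O(\e^{2n-\mu})$, since $2n-\mu>n$ makes $\e^{2n-\mu}=o(\e^{n})$. So the phrase ``of the required order'' is wrong: as written you prove the lower bound with error $O(\e^{n})$, not the $O(\e^{2n-\mu})$ appearing in the statement. Moreover this loss is not an artifact of your bookkeeping: the contribution of $x\in B_\delta$, $y\in\mb R^n\setminus B_{2\delta}$ (where $u_\e(y)=0$ but $U_\e(y)>0$) is bounded below by $c\,\e^{\mu/2}\cdot\e^{(2n-\mu)/2}=c\,\e^{n}$, so the cut-off genuinely costs a term of size $\e^{n}$ and the stated $O(\e^{2n-\mu})$ cannot be reached by any argument. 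In fact the paper's own proof has the same defect: its estimate of the mixed term over $D\cap\{|x-y|>1\}$ already produces $O(\e^{n})$, which is then misrecorded as $O(\e^{2n-\mu})$ in \eqref{har-esti5}. The honest conclusion is the lower bound with $O(\e^{n})$, which is all that is needed later anyway (in Lemmas \ref{l^-<PScrit-lev} and \ref{l^-<PScrit-lev-new} the error only has to be $o(\e^{(n-2s)/2})$, respectively $o(\e^{n-2s})$, and $\e^{n}$ satisfies both). So either state and prove the weaker $O(\e^{n})$ version, or explicitly flag that the claimed exponent $2n-\mu$ is not attainable; do not assert that your $O(\e^{n})$ tail matches it.
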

\begin{proof}
By Hardy-Littlewood-Sobolev inequality, Proposition \ref{estimates}(ii) and \ref{relation}, we get
\begin{equation*}
\begin{split}
&\left(\int_{\Om}\int_{\Om}\frac{|u_\epsilon(x)|^{2^*_{\mu}}|u_\epsilon(y)|^{2^*_{\mu}}}{|x-y|^{\mu}}~\mathrm{d}x\mathrm{d}y \right)^{\frac{n-2s}{2n-\mu}}\hspace{7.0cm} \\
&\leq (C(n,\mu))^{\frac{n-2s}{2n-\mu}} \|u_\epsilon\|^2_{L^{2^*_s}(\Om)}=(C(n,\mu))^{\frac{n-2s}{2n-\mu}}\left(S_s^{\frac{n}{2s}}+ O(\epsilon^n)\right)^{\frac{n-2s}{n}}\\
&= (C(n,\mu))^{\frac{n-2s}{2n-\mu}}\left( (C(n,\mu))^{\frac{n(n-2s)}{2s(2n-\mu)}}(S^H_s)^{\frac{n}{2s}}+ O(\epsilon^n)\right)^{\frac{n-2s}{n}}\\
&=(C(n,\mu))^{\frac{n(n-2s)}{2s(2n-\mu)}} (S^H_s)^{\frac{n-2s}{2s}}+ {O(\epsilon^{n})}.
\end{split}
\end{equation*}
Next, we consider
\begin{equation}\label{har-esti3}
\begin{split}
&\int_{\Om}\int_{\Om}\frac{|u_\epsilon(x)|^{2^*_{\mu}}|u_\epsilon(y)|^{2^*_{\mu}}}{|x-y|^{\mu}}~\mathrm{d}x\mathrm{d}y \\
 &\geq \int_{B_{\delta}}\int_{B_{\delta}}\frac{|u_\epsilon(x)|^{2^*_{\mu}}|u_\epsilon(y)|^{2^*_{\mu}}}{|x-y|^{\mu}}~\mathrm{d}x\mathrm{d}y
 =\int_{B_{\delta}}\int_{B_{\delta}}\frac{|U_\epsilon(x)|^{2^*_{\mu}}|U_\epsilon(y)|^{2^*_{\mu}}}{|x-y|^{\mu}}~\mathrm{d}x\mathrm{d}y\\
& = \int_{\mb R^n}\int_{\mb R^n}\frac{|U_\epsilon(x)|^{2^*_{\mu}}|U_\epsilon(y)|^{2^*_{\mu}}}{|x-y|^{\mu}}~\mathrm{d}x\mathrm{d}y -2 \int_{\mb R^n \setminus B_{\delta}}\int_{B_{\delta}}\frac{|U_\epsilon(x)|^{2^*_{\mu}}|U_\epsilon(y)|^{2^*_{\mu}}}{|x-y|^{\mu}}~\mathrm{d}x\mathrm{d}y\\
& \quad \quad- \int_{\mb R^n \setminus B_{\delta}}\int_{\mb R^n \setminus B_{\delta}}\frac{|U_\epsilon(x)|^{2^*_{\mu}}|U_\epsilon(y)|^{2^*_{\mu}}}{|x-y|^{\mu}}~\mathrm{d}x\mathrm{d}y.
\end{split}
\end{equation}
We estimate the integrals in right hand side of \eqref{har-esti3} separately. Firstly to estimate the first integral, by Lemma \ref{S-H-minimizers} we get that $\{U_\e\}$ forms minimizers of $S_s^H$. Therefore using \eqref{eu-lag} we get
\begin{equation}\label{har-esti4}
{\int_{\mb R^n}\int_{\mb R^n}\frac{|U_\epsilon(x)|^{2^*_{\mu}}|U_\epsilon(y)|^{2^*_{\mu}}}{|x-y|^{\mu}}~\mathrm{d}x\mathrm{d}y
= \left(\frac{\|U_\epsilon\|^2}{S_s^H}\right)^{{2^*_\mu}}= \left(\frac{S_s^{n/2s}}{S_s^H}\right)^{{2^*_\mu}}= C(n,\mu)^{n/2s}(S_s^H)^{\frac{2n-\mu}{2s}}}
\end{equation}
Secondly, consider
\begin{equation*}\label{har-esti5}
\begin{split}
&\int_{\mb R^n \setminus B_{\delta}}\int_{B_{\delta}}\frac{|U_\epsilon(x)|^{2^*_{\mu}}|U_\epsilon(y)|^{2^*_{\mu}}}{|x-y|^{\mu}}~\mathrm{d}x\mathrm{d}y\\
&\leq C_{2,s}\int_{\mb R^n \setminus B_{\delta}}\int_{B_{\delta}} \frac{\epsilon^{\mu-2n}}{|x-y|^{\mu}\left( 1+ |\frac{x}{\epsilon}|^2 \right)^{\frac{2n-\mu}{2}}\left( 1+ |\frac{y}{\epsilon}|^2 \right)^{\frac{2n-\mu}{2}}}~\mathrm{d}x\mathrm{d}y\\
& = \epsilon^{2n-\mu}C_{2,s}\int_{\mb R^n \setminus B_{\delta}}\int_{B_{\delta}} \frac{1}{|x-y|^{\mu}\left( \epsilon^2+ |x|^2 \right)^{\frac{2n-\mu}{2}}\left( \epsilon^2+ |y|^2 \right)^{\frac{2n-\mu}{2}}}~\mathrm{d}x\mathrm{d}y.
\end{split}
\end{equation*}
where $C_{2,s}$ is an appropriate positive constant.{ Let $D:=  B_{\delta} \times (\mb R^n\setminus B_\delta )$  then
\begin{align*}
&\epsilon^{2n-\mu}C_{2,s}\int_{\mb R^n \setminus B_{\delta}}\int_{B_{\delta}} \frac{1}{|x-y|^{\mu}\left( \epsilon^2+ |x|^2 \right)^{\frac{2n-\mu}{2}}\left( \epsilon^2+ |y|^2 \right)^{\frac{2n-\mu}{2}}}~\mathrm{d}x\mathrm{d}y\\
&= \epsilon^{2n-\mu}C_{2,s}\left(\int_{D \cap \{|x-y|\leq 1\}}+ \int_{D \cap \{|x-y|>1\}} \right) \frac{1}{|x-y|^{\mu}\left( \epsilon^2+ |x|^2 \right)^{\frac{2n-\mu}{2}}\left( \epsilon^2+ |y|^2 \right)^{\frac{2n-\mu}{2}}}~\mathrm{d}x\mathrm{d}y.
\end{align*}
Consider
\begin{align*}
& \epsilon^{2n-\mu}C_{2,s}\int_{D \cap \{|x-y|>1\}}\frac{1}{|x-y|^{\mu}\left( \epsilon^2+ |x|^2 \right)^{\frac{2n-\mu}{2}}\left( \epsilon^2+ |y|^2 \right)^{\frac{2n-\mu}{2}}}~\mathrm{d}x\mathrm{d}y\\
& = \epsilon^{2n-\mu}C_{2,s}\int_{D \cap \{|x-y|>1\}}\frac{1}{\left( \epsilon^2+ |x|^2 \right)^{\frac{2n-\mu}{2}}\left( \epsilon^2+ |y|^2 \right)^{\frac{2n-\mu}{2}}}~\mathrm{d}x\mathrm{d}y\\
& \leq \epsilon^{2n-\mu}C_{2,s} \int_{B_\delta}\frac{\mathrm{d}x}{\left( \epsilon^2+ |x|^2 \right)^{\frac{2n-\mu}{2}}} \int_{\mb R^n\setminus B_\delta}\frac{\mathrm{d}y}{\left( \epsilon^2+ |y|^2 \right)^{\frac{2n-\mu}{2}}}\\
& \leq \epsilon^{2n-\mu}C_{2,s} \int_0^{\delta/\e}\frac{\e^{\mu-n}t^{n-1}\mathrm{d}t}{(1+t^2)^{2n-\mu}}\int_{\mb R^n\setminus B_\delta}\frac{\mathrm{d}y}{\left( |y|^2 \right)^{\frac{2n-\mu}{2}}}= O(\e^n).
\end{align*}
Next we observe that the set $D\cap  \{|x-y|>1\} $ is bounded and if $x, y \in D\cap  \{|x-y|>1\}$ then there exist constants $\alpha, \beta >0$ such that $\alpha \leq |x|,|y|\leq \beta$. This implies that
\begin{align*}
& \epsilon^{2n-\mu}C_{2,s}\int_{D \cap \{|x-y|>1\}}\frac{1}{|x-y|^{\mu}\left( \epsilon^2+ |x|^2 \right)^{\frac{2n-\mu}{2}}\left( \epsilon^2+ |y|^2 \right)^{\frac{2n-\mu}{2}}}~\mathrm{d}x\mathrm{d}y\\
& \leq \epsilon^{2n-\mu}C_{2,s}\int_{D \cap \{|x-y|>1\}}\frac{1}{|x-y|^{\mu}\left( |x|^2 \right)^{\frac{2n-\mu}{2}}\left( |y|^2 \right)^{\frac{2n-\mu}{2}}}~\mathrm{d}x\mathrm{d}y\\
& \leq O( \epsilon^{2n-\mu})\int_{D \cap \{|x-y|>1\}}\frac{1}{|x-y|^{\mu}}~\mathrm{d}x\mathrm{d}y= O(\e^{2n-\mu})
\end{align*}
since $\mu \in (0,n)$. Therefore
\begin{equation}\label{har-esti5}
\int_{\mb R^n \setminus B_{\delta}}\int_{B_{\delta}}\frac{|U_\epsilon(x)|^{2^*_{\mu}}|U_\epsilon(y)|^{2^*_{\mu}}}{|x-y|^{\mu}}~\mathrm{d}x\mathrm{d}y \leq O(\e^{2n-\mu}).
\end{equation}}

Lastly, in a similar manner we have
\begin{equation}\label{har-esti6}
\begin{split}
&\int_{\mb R^n \setminus B_{\delta}}\int_{\mb R^n \setminus B_{\delta}}\frac{|U_\epsilon(x)|^{2^*_{\mu}}|U_\epsilon(y)|^{2^*_{\mu}}}{|x-y|^{\mu}}~\mathrm{d}x\mathrm{d}y\\
&\leq C_{2,s}^\prime\int_{\mb R^n \setminus B_{\delta}}\int_{\mb R^n\setminus B_{\delta}} \frac{\epsilon^{\mu-2n}}{|x-y|^{\mu}\left( 1+ |\frac{x}{\epsilon}|^2 \right)^{\frac{2n-\mu}{2}}\left( 1+ |\frac{y}{\epsilon}|^2 \right)^{\frac{2n-\mu}{2}}}~\mathrm{d}x\mathrm{d}y\\
& = \epsilon^{2n-\mu}C_{2,s}^\prime\int_{\mb R^n \setminus B_{\delta}}\int_{\mb R^n \setminus B_{\delta}} \frac{1}{|x-y|^{\mu}\left( \epsilon^2+ |x|^2 \right)^{\frac{2n-\mu}{2}}\left( \epsilon^2+ |y|^2 \right)^{\frac{2n-\mu}{2}}}~\mathrm{d}x\mathrm{d}y\\
& \leq \epsilon^{2n-\mu}C_{2,s}^\prime \int_{\mb R^n \setminus B_{\delta}} \int_{ \mb R^n \setminus B_{\delta}} \frac{1}{|x-y|^\mu|x|^{2n-\mu}|y|^{2n-\mu}}\mathrm{d}x\mathrm{d}y = O(\epsilon^{2n-\mu}).
\end{split}
\end{equation}
where $ C_{2,s}^\prime$ is an appropriate positive constant. Using the estimates \eqref{har-esti4}, \eqref{har-esti5} and \eqref{har-esti6} in \eqref{har-esti3}, we get
 \begin{equation*}\label{har-esti7}
\left( \int_{\Om}\int_{\Om}\frac{|u_\epsilon(x)|^{2^*_{\mu}}|u_\epsilon(y)|^{2^*_{\mu}}}{|x-y|^{\mu}}~\mathrm{d}x\mathrm{d}y \right)^{\frac{n-2s}{2n-\mu}} \geq \left( (C(n,\mu))^{\frac{n}{2s}}(S^H_s)^{\frac{2n-\mu}{2s}}- O(\epsilon^{{2n-\mu} })\right)^{\frac{n-2s}{2n-\mu}}.
 \end{equation*}
 This completes the proof.{\hfill{\QED}}
\end{proof}

\section{Analysis of fibering maps}
In this section we study the fibering maps and establish some preliminaries for the Nehari manifold. It is easy to see that the energy functional $I_{\la,\delta}$ is not bounded below on the whole domain $Y$, so we minimize $I_{\la,\delta}$ over proper subsets of the Nehari manifold.
%
We define the set
\[\mc N_{\la,\delta}:= \{(u,v)\in Y\setminus \{0\}:\; (I_{\la,\delta}^\prime(u,v),(u,v))=0\}\]
and find that the functional $I_{\la,\delta}$ is bounded below on $\mc N_{\la,\delta}$. Therefore we state the following Lemma without giving the proof.
\begin{Lemma}\label{func-bdd-below}
$I_{\la,\delta}$ is coercive and bounded below on $\mc N_{\la,\delta}$ for any $\la,\delta>0$.
\end{Lemma}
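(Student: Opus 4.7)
The plan is to exploit the Nehari identity to eliminate the super-critical Choquard contribution from $I_{\la,\delta}$, leaving a positive quadratic term controlled only by a sub-quadratic $L^q$-term that can be dominated through the embedding $X_0\hookrightarrow L^q(\Om)$.

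First, I will use that on $\mc N_{\la,\delta}$ the identity $(I_{\la,\delta}'(u,v),(u,v))=0$ together with \eqref{Iprime} gives
\[
C_s^n\,\|(u,v)\|^2 \;=\; \int_{\Om}(\la|u|^q+\delta|v|^q)\,\mathrm{d}x \;+\; 2B(u,v).
\]
Solving for $2B(u,v)$ and substituting into the definition of $I_{\la,\delta}$ yields, after grouping terms,
\[
I_{\la,\delta}(u,v) \;=\; C_s^n\!\left(\frac{1}{2}-\frac{1}{2\cdot 2^*_\mu}\right)\|(u,v)\|^2 \;-\; \left(\frac{1}{q}-\frac{1}{2\cdot 2^*_\mu}\right)\int_{\Om}(\la|u|^q+\delta|v|^q)\,\mathrm{d}x.
\]
Because $1<q<2$ and $2^*_\mu=\frac{2n-\mu}{n-2s}>1$, both coefficients $\frac{1}{2}-\frac{1}{2\cdot 2^*_\mu}$ and $\frac{1}{q}-\frac{1}{2\cdot 2^*_\mu}$ are strictly positive.

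Next, I will bound the $L^q$-term using the continuous embedding $X_0\hookrightarrow L^q(\Om)$ (available since $q<2<2^*_s$ and $\Om$ is bounded). Writing the embedding constant as $C_q>0$, one obtains
\[
\int_{\Om}(\la|u|^q+\delta|v|^q)\,\mathrm{d}x \;\leq\; C_q(\la\|u\|^q+\delta\|v\|^q) \;\leq\; 2C_q(\la+\delta)\,\|(u,v)\|^q,
\]
since $\|u\|,\|v\|\leq\|(u,v)\|$. Inserting this into the preceding identity gives a bound of the form
\[
I_{\la,\delta}(u,v) \;\geq\; A\,\|(u,v)\|^2 \;-\; B(\la+\delta)\,\|(u,v)\|^q,
\]
with explicit positive constants $A,B$ depending only on $n,s,\mu,q,\Om$.

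Finally, since $q<2$, the real function $t\mapsto At^2-B(\la+\delta)t^q$ is bounded below on $[0,\infty)$ and tends to $+\infty$ as $t\to\infty$; this immediately delivers both boundedness from below and coercivity of $I_{\la,\delta}$ on $\mc N_{\la,\delta}$. There is no genuine obstacle here; the only point requiring care is the bookkeeping that confirms both coefficients extracted from the Nehari substitution are positive, which is precisely where the assumption $1<q<2<2\cdot 2^*_\mu$ is used.
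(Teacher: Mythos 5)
Your proposal is correct and follows essentially the same route as the paper: substitute the Nehari identity $C_s^n\|(u,v)\|^2=\int_{\Om}(\la|u|^q+\delta|v|^q)\,\mathrm{d}x+2B(u,v)$ into $I_{\la,\delta}$ to eliminate the Choquard term, bound the $L^q$-term by a constant times $\|(u,v)\|^q$ (the paper does this via H\"older and $S_s$, you via the embedding constant $C_q$, which is an immaterial difference), and conclude from $1<q<2$ that $A t^2-Bt^q$ is coercive and bounded below.
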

{\begin{proof}
Let $\la, \delta >0$ and $(u,v) \in \mc N_{\la,\delta}$. Then it holds that
\begin{equation*}
\begin{split}
I_{\la,\delta}(u,v) & = {C_s^n}\left(\frac12 - \frac{1}{22^*_\mu} \right) \|(u,v)\|^2 - \left( \frac{1}{q} - \frac{1}{22^*_\mu}\right) \q\\
& \geq {C_s^n}\left(\frac12 - \frac{1}{22^*_\mu} \right) \|(u,v)\|^2 - \left( \frac{1}{q} - \frac{1}{22^*_\mu}\right)|\Om|^{\frac{2^*_s-q}{2^*_s}} (\la^{\frac{2}{2-q}} + \delta^{\frac{2}{2-q}}) S_s^{-\frac{q}{2}}\|(u,v)\|^q
\end{split}
\end{equation*}
and this yields the assertion because $1<q<2$. \hfill{\QED}
\end{proof}}

\noi From the definition of $\mc N_{\la,\delta}$, it is obvious that $(u,v)\in \mc N_{\la,\delta}$ if and only if $(u,v)\neq (0,0)$ and
\[{C_s^n}\|(u,v)\|^2 = \int_{\Om}(\la|u|^q+\delta|v|^q)\mathrm{d}x+ 2 B(u,v).\]
Let us define the fibering map $\varphi_{u,v}:\mb R^+ \to \mb R$ as
\[\varphi_{u,v}(t)= I_{\la,\delta}(tu,tv)= \frac{t^2{C_s^n}}{2}\|(u,v)\|^2-\frac{t^q}{q}\q- \frac{t^{22^*_\mu}}{2^*_\mu}B(u,v).\]
This gives another characterization of $\mc N_{\la,\delta}$ as follows
\[\mc N_{\la,\delta}=\{(tu,tv)\in Y\setminus\{(0,0)\}:\; \varphi_{u,v}^\prime (t)=0\}
\]
because $\varphi_{u,v}^\prime(t)= (I_{\la,\delta}^\prime(tu,tv),(u,v))$. An easy computation yields
\begin{equation}\label{varphi'}
\varphi_{u,v}^\prime(t)= t{C_s^n}\|(u,v)\|^2-t^{q-1}\q-2t^{22^*_\mu-1}B(u,v)
\end{equation}
\begin{equation}\label{varphi''}
\text{and}\; \varphi_{u,v}^{\prime\prime}(t)= {C_s^n}\|(u,v)\|^2- (q-1)t^{q-2}\q-2(22^*_\mu-1)t^{22^*_\mu-2}B(u,v).
\end{equation}
If $(u,v)\in \mc N_{\la,\delta}$ then \eqref{varphi'} and \eqref{varphi''} gives
\begin{align*}\label{varphi''n}
\varphi^{\prime\prime}_{u,v}(1)&= (2-q){C_s^n}\|(u,v)\|^2+ 2(q-22^*_\mu)B(u,v)\\
&= (2-22^*_\mu){C_s^n}\|(u,v)\|^2+(22^*_\mu-q)\q.
\end{align*}
Naturally, our next step is to divide $\mc N_{\la,\delta}$ into three subsets corresponding to local minima, local maxima and point of inflexion of $\varphi_{u,v}$ namely
\begin{align*}
\mc N_{\la,\delta}^\pm := \{(u,v)\in \mc N_{\la,\delta}:\; \varphi_{u,v}^{\prime\prime}(1)\gtrless 0\}\;\; \text{and}\;\;\mc N_{\la,\delta}^0 := \{(u,v)\in \mc N_{\la.\delta}:\;  \varphi_{u,v}^{\prime\prime}(1)=0\}.
\end{align*}
Our next lemma says that the local minimizers of $I_{\la,\delta}$ on the Nehari manifold $\mc N_{\la,\delta}$ are actually its critical points. So it is enough to prove the existence of minimizers of $I_{\la,\delta}$ on $\mc N_{\la,\delta}$.

\begin{Lemma}\label{min-is-sol}
Let $(u_1,v_1)$ and $(u_2,v_2)$ are minimizers of $I_{\la,\delta}$ on $\mc N_{\la,\delta}^+$ and $\mc N_{\la,\delta}^-$ respectively. Then $(u_1,v_1)$ and $(u_2,v_2)$ are nontrivial weak solutions of $(P_{\la,\delta})$.
\end{Lemma}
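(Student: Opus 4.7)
The plan is to apply the Lagrange multiplier principle for constrained extrema. Set
\[G(u,v) := (I_{\la,\delta}^\prime(u,v),(u,v)) = C_s^n\|(u,v)\|^2 - \q - 2B(u,v),\]
so that $\mc N_{\la,\delta} = \{(u,v)\in Y\setminus\{(0,0)\} : G(u,v) = 0\}$. The map $(u,v)\mapsto \varphi_{u,v}^{\prime\prime}(1)$ is continuous on $Y\setminus\{(0,0)\}$, so $\mc N_{\la,\delta}^+$ and $\mc N_{\la,\delta}^-$ are relatively open in $\mc N_{\la,\delta}$. Consequently each $(u_i,v_i)$ is a local minimum of $I_{\la,\delta}$ restricted to the level set $G^{-1}(0)$, and provided $G^\prime(u_i,v_i)\not\equiv 0$ in $Y^\ast$, the classical constrained extremum theorem yields a multiplier $\theta_i\in\mb R$ with
\[I_{\la,\delta}^\prime(u_i,v_i) = \theta_i\, G^\prime(u_i,v_i).\]

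To force $\theta_i = 0$, I would test this identity against $(u_i,v_i)$ itself. The left-hand side equals $G(u_i,v_i)=0$. For the right-hand side I use the scaling identity
\[G(tu,tv) = (I_{\la,\delta}^\prime(tu,tv),(tu,tv)) = t\,\varphi_{u,v}^\prime(t),\]
which follows directly from $\varphi_{u,v}(t) = I_{\la,\delta}(tu,tv)$. Differentiating at $t=1$ and using $\varphi_{u_i,v_i}^\prime(1)=0$ gives
\[(G^\prime(u_i,v_i),(u_i,v_i)) = \varphi_{u_i,v_i}^\prime(1) + \varphi_{u_i,v_i}^{\prime\prime}(1) = \varphi_{u_i,v_i}^{\prime\prime}(1).\]
By the very definition of $\mc N_{\la,\delta}^\pm$, this last quantity is nonzero, so $0 = \theta_i\,\varphi_{u_i,v_i}^{\prime\prime}(1)$ forces $\theta_i=0$. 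Hence $I_{\la,\delta}^\prime(u_i,v_i)=0$ in $Y^\ast$, which is precisely the weak formulation of $(P_{\la,\delta})$ recorded above. Non-triviality is automatic since $\mc N_{\la,\delta}\subset Y\setminus\{(0,0)\}$.

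The only step demanding genuine verification is the hypothesis $G^\prime(u_i,v_i)\not\equiv 0$ that legitimises the Lagrange rule; but this is itself automatic from the preceding calculation. Indeed, if $G^\prime(u_i,v_i)$ were the zero functional, then pairing with $(u_i,v_i)$ would yield $\varphi_{u_i,v_i}^{\prime\prime}(1)=0$, contradicting $(u_i,v_i)\in \mc N_{\la,\delta}^+\cup\mc N_{\la,\delta}^-$. Hence the proof closes cleanly, modulo the (separately treated) parameter restrictions ensuring $\mc N_{\la,\delta}^0=\emptyset$ and the non-emptiness of $\mc N_{\la,\delta}^\pm$, which are addressed in the subsequent lemmas rather than here.
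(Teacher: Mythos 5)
Your proposal is correct and follows essentially the same route as the paper: both treat the Nehari set as the zero level of the constraint functional $G(u,v)=(I_{\la,\delta}^\prime(u,v),(u,v))$ (called $J_{\la,\delta}$ in the paper), apply the Lagrange multiplier rule at the minimizer, and kill the multiplier by pairing with $(u_i,v_i)$ itself, since $(G^\prime(u_i,v_i),(u_i,v_i))=\varphi_{u_i,v_i}^{\prime\prime}(1)\neq 0$ on $\mc N_{\la,\delta}^{\pm}$. Your explicit verification of the scaling identity and of the nondegeneracy hypothesis $G^\prime(u_i,v_i)\not\equiv 0$ simply spells out what the paper leaves implicit.
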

\begin{proof}
Let $(u_1,v_1) \in \mc N_{\la,\delta}^+$ such that $I_{\la,\delta}(u_1,v_1)= \inf I_{\la,\delta}(\mc N_{\la,\delta}^+)$ and define $V:= \{(u,v)\in Y: \; (J_{\la,\delta}^\prime(u,v),(u,v))>0\}$ where $J_{\la,\delta}(u,v)= ( I_{\la,\delta}^\prime(u,v),(u,v)) $. So, $\mc N_{\la,\delta}^+ = \{(u,v)\in V:\; J_{\la,\delta}(u,v)=0\}$ because for each $(u,v)$ such that $J_{\la,\delta}(u,v)=0$, we have $(J_{\la,\delta}^\prime(u,v),(u,v))>0$ if and only if $\varphi_{u,v}^{\prime \prime}(1)>0$. Therefore there exists Lagrangian multiplier $\rho \in \mb R$ such that
\[I_{\la,\delta}^\prime(u_1,v_1)= \rho J_{\la,\delta}^\prime(u_1,v_1).\]
Since $(u_1,v_1) \in \mc N_{\la,\delta}^+$, $(I_{\la,\delta}^\prime(u_1,v_1),(u_1,v_1))=0$ and $(J_{\la,\delta}^\prime(u_1,v_1),(u_1,v_1))>0$. This implies $\rho=0$. Therefore, $(u_1,v_1)$ is a nontrivial weak solution of $(P_{\la,\delta})$. Similarly, we can prove that if $(u_2,v_2) \in \mc N_{\la,\delta}^-$ is such that $I_{\la,\delta}(u_2,v_2)= \inf I_{\la,\delta}(\mc N_{\la,\delta}^-)$ then $(u_2,v_2)$ is also a nontrivial weak solution of $(P_{\la,\delta})$.\hfill{\QED}
\end{proof}


\noi For fixed $(u,v)\in Y\setminus\{(0,0)\}$, we write $\varphi^\prime_{u,v}(t)= t^{22^*_\mu-1}(m_{u,v}(t)- 2 B(u,v))$ where
\[m_{u,v}(t):= t^{2-22^*_\mu}{C_s^n}\|(u,v)\|^2 -t^{q-22^*_\mu}\q. \]
Clearly, $\varphi_{u,v}^\prime(t)=0$ if and only if $m_{u,v}(t)=2B(u,v)$ if and only if $(tu,tv)\in \mc N_{\la,\delta}$. So in order to understand the fibering maps, we study the map $m_{u,v}$. Since $2<22^*_\mu$ and $1<q<2$, we get
\[\lim_{t\to 0^+}m_{u,v}(t)=-\infty \; \text{and}\; \lim_{t\to +\infty}m_{u,v}(t)= 0. \]
\textbf{Claim:} The map $m_{u,v}(t)$ has a unique critical point at
\[t_{\text{max}}(u,v):= \left( \displaystyle\frac{\displaystyle(22^*_\mu-q)\q}{(22^*_\mu-2){C_s^n}\|(u,v)\|^2} \right)^{\frac{1}{2-q}}. \]
This follows from
\[m_{u,v}^\prime(t)= (2-22^*_\mu)t^{1-22^*_\mu}{C_s^n}\|(u,v)\|^2- (q-22^*_\mu)t^{q-1-22^*_\mu}\q. \]
We can check that $t_{\text{max}}(u,v)$ solves the equation $m_{u,v}^\prime(t)=0$. Also we can verify that since $1<q<2$
\begin{align*}
m_{u,v}^{\prime\prime}(t_{\text{max}}(u,v)) = \frac{(q-2)(22^*_\mu-2)^{\frac{2+22^*_\mu-q}{2-q}}{({C_s^n}\|(u,v)\|^2)^{\frac{2+22^*_\mu-q}{2-q}}}}{(22^*_\mu-q)^{\frac{22^*_\mu}{2-q}}
\left(\q\right)^{\frac{22^*_\mu}{2-q}}}<0
\end{align*}
implying that $t_{\text{max}}(u,v)$ is the point of maximum for the map $m_{u,v}(t)$. The uniqueness of the critical point of $m_{u,v}$ at $t_{\text{max}}(u,v)$ guarantees that $m_{u,v}(t)$ is strictly increasing in $(0,t_{\text{max}}(u,v))$ and strictly decreasing in $(t_{\text{max}}(u,v),+\infty)$.
If ${(tu,tv)}\in \mc N_{\la,\delta}$ then
\[t^{22^*_\mu-1}m_{u,v}^\prime(t)= \varphi_{u,v}^{\prime \prime}(t)= t^{-2}{\varphi_{tu,tv}^{\prime \prime}}(1)\]
which implies that $(tu,tv)\in \mc N_{\la,\delta}^{+}$(or $\mc N_{\la,\delta}^{-}$) if and only if $m_{u,v}^\prime(t)> 0$(or $m_{u,v}^\prime(t)< 0$).

\begin{Lemma}\label{Theta-def-lem}
For every $(u,v)\in Y \setminus \{(0,0)\}$ and $\la,\delta $ satisfying $0<\la^{\frac{2}{2-q}}+ \delta^{\frac{2}{2-q}} < \Theta$,
where
\begin{equation}\label{Theta-def}
\Theta := \left[ \frac{2^{2^*_\mu-1}{(C_s^n)^{\frac{22^*_\mu-q}{2-q}}}}{C(n,\mu)}\left(\frac{2-q}{22^*_\mu-q}\right) \left( \frac{22^*_\mu-2}{22^*_\mu-q}\right)^{\frac{22^*_\mu-2}{2-q}} S_s^{\frac{q(2^*_\mu-1)}{2-q}+2^*_\mu}|\Om|^{-\frac{(2^*_s-q)(22^*_\mu-2)}{2^*_s(2-q)}}\right]^{\frac{1}{2^*_\mu-1}},
\end{equation}
there exists unique $t_1,t_2>0$ such that $t_1<t_{\text{max}}(u,v)<t_2$, $(t_1u,t_1v) \in \mc N_{\la,\delta}^+$ and $(t_2u,t_2v)\in \mc N_{\la,\delta}^-$. Moreover,
\[I_{\la,\delta}(t_1u,t_1v)=\inf_{t \in [0,t_{\text{max}(u,v)}]} I_{\la,\delta}(tu,tv)\; \text{and}\; I_{\la,\delta}(t_2u,t_2v)=\sup_{t\geq 0} I_{\la,\delta}(tu,tv).\]
\end{Lemma}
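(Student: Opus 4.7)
The plan is to exploit the shape analysis of $m_{u,v}$ developed just before the statement: since $\varphi'_{u,v}(t) = t^{22^*_\mu - 1}(m_{u,v}(t) - 2 B(u,v))$, positive $t$ with $(tu, tv) \in \mc N_{\la,\delta}$ correspond exactly to the positive roots of the equation $m_{u,v}(t) = 2 B(u,v)$. The strict monotonicity of $m_{u,v}$ on $(0, t_{\text{max}}(u,v))$ and on $(t_{\text{max}}(u,v), +\infty)$, together with $m_{u,v}(0^+) = -\infty$ and $m_{u,v}(+\infty) = 0$, forces exactly two such positive roots $t_1 < t_{\text{max}}(u,v) < t_2$ to exist and to be unique precisely when $m_{u,v}(t_{\text{max}}(u,v)) > 2 B(u,v)$. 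The core of the proof is therefore to verify this strict inequality under the stated smallness condition on $\la, \de$.

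Substituting the explicit formula for $t_{\text{max}}(u,v)$ back into $m_{u,v}$ yields
\[
m_{u,v}(t_{\text{max}}(u,v)) = \frac{2-q}{22^*_\mu - 2}\left(\q\right)\left(\frac{(22^*_\mu - 2)\, C_s^n\, \|(u,v)\|^2}{(22^*_\mu - q)\q}\right)^{\frac{22^*_\mu - q}{2-q}}.
\]
I would upper-bound $B(u,v)$ by chaining Lemma \ref{ineq}, Hardy--Littlewood--Sobolev, the Sobolev embedding $\|\cdot\|_{L^{2^*_s}(\Om)} \leq S_s^{-1/2}\|\cdot\|$, and the AM--GM bound $\|u\|^{2^*_\mu}\|v\|^{2^*_\mu} \leq 2^{-2^*_\mu}\|(u,v)\|^{22^*_\mu}$, to obtain
\[
2 B(u,v) \leq \frac{C(n,\mu)}{2^{2^*_\mu - 1}\, S_s^{2^*_\mu}}\|(u,v)\|^{22^*_\mu},
\]
and symmetrically upper-bound the subcritical term by H\"older in $\Om$, Sobolev, and a discrete H\"older with conjugate exponents $2/q$ and $2/(2-q)$ applied to the pair $(\la,\de)$, yielding
\[
\q \leq |\Om|^{\frac{2^*_s - q}{2^*_s}}\, S_s^{-q/2}\left(\la^{\frac{2}{2-q}} + \de^{\frac{2}{2-q}}\right)^{\frac{2-q}{2}}\|(u,v)\|^q.
\]
Inserting both bounds into $m_{u,v}(t_{\text{max}}(u,v)) > 2 B(u,v)$ and using the identity $2(22^*_\mu - q) - q(22^*_\mu - 2) = 22^*_\mu(2-q)$ to cancel the powers of $\|(u,v)\|$ reduces the inequality precisely to $\la^{\frac{2}{2-q}} + \de^{\frac{2}{2-q}} < \Theta$ with $\Theta$ as in \eqref{Theta-def}.

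The remaining assertions follow from sign analysis. Strict monotonicity of $m_{u,v}$ on $(0, t_{\text{max}})$ gives $m'_{u,v}(t_1) > 0$; the identity $\varphi''_{u,v}(t) = t^{22^*_\mu - 1}\, m'_{u,v}(t)$ valid on $\mc N_{\la,\delta}$, recorded just before the statement, then yields $\varphi''_{u,v}(t_1) > 0$, so $(t_1 u, t_1 v) \in \mc N_{\la,\delta}^+$; the same argument with $t_2 > t_{\text{max}}$ places $(t_2 u, t_2 v)$ in $\mc N_{\la,\delta}^-$. The sign pattern of $\varphi'_{u,v}$ is negative on $(0, t_1)$, positive on $(t_1, t_2)$, and negative on $(t_2, +\infty)$, so $\varphi_{u,v}$ strictly decreases, then increases, then decreases; combined with $t_1 < t_{\text{max}} < t_2$, this identifies $t_1$ as the minimizer of $\varphi_{u,v}$ on $[0, t_{\text{max}}]$ and $t_2$ as its global maximizer on $[0, +\infty)$ (the auxiliary fact $\varphi_{u,v}(t_2) \geq \varphi_{u,v}(0) = 0$ being automatic from the Nehari identity written as in Lemma \ref{func-bdd-below} together with $t_2 > t_{\text{max}}$ and $1 < q < 2$). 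The principal technical obstacle is the algebraic bookkeeping in the quantitative step: the four embedding and interpolation inequalities must be chained while every power of $\|(u,v)\|$, $C_s^n$, $S_s$, $C(n,\mu)$, $|\Om|$, $\la$ and $\de$ is tracked so that they collapse exactly into \eqref{Theta-def}; the essential cancellation is the identity $\alpha - \beta = 2-q$ with $\alpha = 22^*_\mu - q$ and $\beta = 22^*_\mu - 2$, which makes the norm powers on the two sides of the threshold inequality match.
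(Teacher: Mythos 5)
Your proposal is, in substance, the paper's own proof: the reduction of membership in $\mc N_{\la,\delta}$ to the equation $m_{u,v}(t)=2B(u,v)$, the two bounds $\q \leq S_s^{-q/2}|\Om|^{\frac{2^*_s-q}{2^*_s}}\big(\la^{\frac{2}{2-q}}+\de^{\frac{2}{2-q}}\big)^{\frac{2-q}{2}}\|(u,v)\|^{q}$ and $2B(u,v)\leq 2^{1-2^*_\mu}C(n,\mu)S_s^{-2^*_\mu}\|(u,v)\|^{22^*_\mu}$, and the collapse of $m_{u,v}(t_{\max})>2B(u,v)$ to the threshold \eqref{Theta-def} are precisely the chain \eqref{Theta-def2}--\eqref{Theta-def6}; your closed form for $m_{u,v}(t_{\max})$ agrees with \eqref{Theta-def4}, and your route to the bound on $B(u,v)$ (Lemma \ref{ineq}, Hardy--Littlewood--Sobolev, Sobolev, AM--GM) simply re-proves Lemma \ref{reltn} inline and produces the identical constant, so this is a presentational difference only. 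The exponent bookkeeping you describe does close up exactly as claimed.

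The one point to correct is the parenthetical assertion that $\varphi_{u,v}(t_2)\geq\varphi_{u,v}(0)=0$ is ``automatic from the Nehari identity together with $t_2>t_{\max}(u,v)$ and $1<q<2$.'' That deduction is false: at any critical point $t$ of $\varphi_{u,v}$ the Nehari identity gives $\varphi_{u,v}(t)=\frac{(2^*_\mu-1)C_s^n}{22^*_\mu}\,t^{2}\|(u,v)\|^{2}-\frac{22^*_\mu-q}{22^*_\mu q}\,t^{q}\q$, and since $1<q<2$ this is nonnegative if and only if $\big(t/t_{\max}(u,v)\big)^{2-q}\geq 2/q>1$, i.e.\ it requires a quantitative lower bound on $t_2/t_{\max}(u,v)$ which does not follow from $t_2>t_{\max}(u,v)$ alone (indeed $\varphi_{u,v}(t_2)<0$ whenever $t_2$ is close to $t_{\max}(u,v)$, which the hypothesis $2B(u,v)<m_{u,v}(t_{\max}(u,v))$ by itself does not exclude). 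To be fair, the paper's proof asserts the same supremum statement from the monotonicity pattern without addressing this either, so your argument matches the paper's level of rigor at that step; but as written your justification is wrong, and you should either establish the needed lower bound on $t_2$ (e.g.\ by checking $2B(u,v)\leq m_{u,v}\big((2/q)^{1/(2-q)}t_{\max}(u,v)\big)$, possibly at the cost of shrinking the admissible range of $\la,\de$) or state the conclusion as $I_{\la,\delta}(t_2u,t_2v)=\sup_{t\geq t_{\max}(u,v)}I_{\la,\delta}(tu,tv)$, which is all the monotonicity argument yields.
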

\begin{proof}
Let $(u,v)\in Y\setminus \{(0,0)\}$. Then we have already seen that
\begin{equation}\label{Theta-def1}
m_{u,v}(t)=2B(u,v)
\end{equation}
 if and only if $(tu,tv)\in \mc N_{\la,\delta}$. Since $B(u,v)>0$, we say that \eqref{Theta-def1} can never hold if we choose $\la$ and $\delta$ such that
 $2B(u,v)>m_{u,v}(t_{\text{max}}(u,v))$
 and vice-versa.
 In this case, $(u,v) \not\in \mc N_{\la,\delta}$ and hence not a weak solution to $(P_{\la,\delta})$. Using H\"{o}lder's inequality and the definition of $S_s$, we get
 \begin{equation}\label{Theta-def2}
 \begin{split}
\q \leq S_s^{-\frac{q}{2}}|\Om|^{\frac{2^*_s-q}{2^*_s}}(\la \|u\|^q+ \delta \|v\|^q)
  { \leq S_s^{-\frac{q}{2}}|\Om|^{\frac{2^*_s-q}{2^*_s}}\|(u,v)\|^q(\la^{\frac{2}{2-q}}+\delta^{\frac{2}{2-q}})^{\frac{2-q}{2}}}.
 \end{split}
 \end{equation}

 Also from the definition of $\tilde S^H_s$ and Lemma \ref{reltn}, we get
 \begin{equation}\label{Theta-def3}
2 B(u,v)\leq  2(\tilde S_s^H)^{-2^*_\mu}\|(u,v)\|^{22^*_\mu}= 2^{1-2^*_\mu}S_s^{-2^*_\mu}C(n,\mu)\|(u,v)\|^{22^*_\mu}.
 \end{equation}
 Using \eqref{Theta-def2} we can estimate $m_{u,v}(t_{\text{max}})$ as follows
 \begin{equation}\label{Theta-def4}
 \begin{split}
 m_{u,v}({t_{\text{max}}(u,v)}) &= \left[\left(\frac{22^*_\mu-2}{22^*_\mu-q}\right)^{\frac{22^*_\mu-2}{2-q}}- \left(\frac{22^*_\mu-2}{22^*_\mu-q}\right)^{\frac{22^*_\mu-q}{2-q}}\right]\frac{({C_s^n}\|(u,v)\|^2)^{\frac{22^*_\mu-q}{2-q}}}{\left(\q\right)^{\frac{22^*_\mu-2}{2-q}}}\\
 & = \left(\frac{22^*_\mu-2}{22^*_\mu-q}\right)^{\frac{22^*_\mu-2}{2-q}}\left(\frac{2-q}{22^*_\mu-q} \right)\frac{({C_s^n}\|(u,v)\|^2)^{\frac{22^*_\mu-q}{2-q}}}{\left(\q\right)^{\frac{22^*_\mu-2}{2-q}}}\\
 & \geq \left(\frac{22^*_\mu-2}{22^*_\mu-q}\right)^{\frac{22^*_\mu-2}{2-q}}\left(\frac{2-q}{22^*_\mu-q} \right)\frac{{(C_s^n)^{\frac{22^*_\mu-q}{2-q}}}\|(u,v)\|^{22^*_\mu}}{\left(\la^{\frac{2}{2-q}}+ \delta^{\frac{2}{2-q}} \right)^{2^*_\mu-1}{\left({S_s}\right)^{-\frac{q(2^*_\mu-1)}{2-q}}}|\Om|^{\frac{(2^*_s-q)(22^*_\mu-2)}{2^*_s(2-q)}}}.
 \end{split}
 \end{equation}
 Now if $\la$ and $\delta$ satisfies $0< \la^{\frac{2}{2-q}}+ \delta^{\frac{2}{2-q}}<\Theta $, where $\Theta$ is given in \eqref{Theta-def}, then
 \begin{equation}\label{Theta-def5}
2^{1-2^*_\mu}S_s^{-2^*_\mu}C(n,\mu) \leq \left(\frac{22^*_\mu-2}{22^*_\mu-q}\right)^{\frac{22^*_\mu-2}{2-q}}\left(\frac{2-q}{22^*_\mu-q} \right)\frac{(C_s^n)^{\frac{22^*_\mu-q}{2-q}}}{\left(\la^{\frac{2}{2-q}}+ \delta^{\frac{2}{2-q}} \right)^{2^*_\mu-1}S_s^{-\frac{q(2^*_\mu-1)}{2-q}}|\Om|^{\frac{(2^*_s-q)(22^*_\mu-2)}{2^*_s(2-q)}}}
 \end{equation}
 which along with \eqref{Theta-def4} implies that
 \begin{equation}\label{Theta-def6}
 \begin{split}
 0<2B(u,v)< 2^{1-2^*_\mu} S_s^{-2^*_\mu} C(n,\mu)\|(u,v)\|^{22^*_\mu} < m_{u,v}(t_{\text{max}}(u,v)).
 \end{split}
 \end{equation}
 Therefore there exist unique $t_1,t_2>0$ with $t_1<t_{\text{max}}(u,v) <t_2$ such that
 \[m_{u,v}(t_1)= m_{u,v}(t_2)= 2B(u,v)\]
 and $m_{u,v}^\prime(t_1)>0$ and $m_{u,v}^\prime(t_1)<0$. This implies $(t_1u,t_1v)\in \mc N_{\la,\delta}^+$ and $ (t_2u,t_2v) \in \mc N_{\la,\delta}^-$ and also $\varphi_{u,v}^{\prime\prime}(t_1)>0$ and $\varphi_{u,v}^{\prime\prime}(t_2)<0$. From the definition of $\varphi_{u,v}$, we get
 \[I_{\la,\delta}(t_2u,t_2v)\geq I_{\la,\delta}(tu,tv)\geq I_{\la,\delta}(t_1u,t_1v)\; \text{for each}\; t \in[t_1,t_2];\]
 \[I_{\la,\delta}(t_1u,t_1v) \leq I_{\la,\delta}(tu,tv)\; \text{for each}\; t\in[0,t_1].\]
 Thus
 \[I_{\la,\delta}(t_1u,t_1v)=\inf_{t \in [0,t_{\text{max}}(u,v)]} I_{\la,\delta}(tu,tv)\; \text{and}\; I_{\la,\delta}(t_2u,t_2v)=\sup_{t\geq 0} I_{\la,\delta}(tu,tv).\]
 holds true.\hfill{\QED}
\end{proof}

\noi  We end this section with the following important lemma.
\begin{Lemma}\label{N_0-empty}
If $0 < \la^{\frac{2}{2-q}} + \delta^{\frac{2}{2-q} }< \Theta$, where $\Theta$ is as in \eqref{Theta-def} then $\mc N_{\la,\delta}^0= \emptyset$.
\end{Lemma}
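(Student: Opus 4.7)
The plan is to argue by contradiction: suppose there exists $(u,v)\in\mc N_{\la,\delta}^0$ and derive a conflict with the strict inequality $2B(u,v)<m_{u,v}(t_{\max}(u,v))$ that was established in the proof of Lemma \ref{Theta-def-lem} under the hypothesis $0<\la^{\frac{2}{2-q}}+\delta^{\frac{2}{2-q}}<\Theta$.

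First I would dispose of the semi-trivial cases. If, say, $u\equiv 0$ (the case $v\equiv 0$ being symmetric) and $(0,v)\in\mc N_{\la,\delta}$, then $v\neq 0$ and the Nehari condition reduces to ${C_s^n}\|v\|^2=\delta\int_{\Om}|v|^q\,\mathrm{d}x$. A direct computation then gives $\varphi''_{0,v}(1)=(2-q)\delta\int_{\Om}|v|^q\,\mathrm{d}x>0$, so $(0,v)\in\mc N_{\la,\delta}^+$, not in $\mc N_{\la,\delta}^0$. Consequently any $(u,v)\in\mc N_{\la,\delta}^0$ must have both components nontrivial, and in particular $B(u,v)>0$, which is the regime in which the analysis of $m_{u,v}$ in Lemma \ref{Theta-def-lem} applies.

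Next, since $(u,v)\in\mc N_{\la,\delta}$, the characterization $\varphi'_{u,v}(1)=0$ is equivalent to $m_{u,v}(1)=2B(u,v)$. On the other hand, because $(u,v)\in\mc N_{\la,\delta}^0$, the identity $\varphi''_{u,v}(t)=t^{22^*_\mu-1}m'_{u,v}(t)$ valid on $\mc N_{\la,\delta}$ (taken at $t=1$) forces $m'_{u,v}(1)=0$. Since $t_{\max}(u,v)$ is the unique critical point of $m_{u,v}$ on $(0,\infty)$, we deduce $t_{\max}(u,v)=1$, and hence
\[
2B(u,v)=m_{u,v}(1)=m_{u,v}(t_{\max}(u,v)).
\]

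The hard part is only packaging the contradiction cleanly: the chain of estimates \eqref{Theta-def2}--\eqref{Theta-def6} in Lemma \ref{Theta-def-lem}, using H\"older's inequality, the definition of $S_s$, Lemma \ref{reltn} and the assumption $0<\la^{\frac{2}{2-q}}+\delta^{\frac{2}{2-q}}<\Theta$, yields the strict inequality
\[
2B(u,v)<m_{u,v}(t_{\max}(u,v))
\]
for every $(u,v)\in Y\setminus\{(0,0)\}$ with $B(u,v)>0$. This contradicts the equality displayed above, so $\mc N_{\la,\delta}^0$ must be empty. I do not anticipate any genuinely new technical obstacle; the proof is essentially a rereading of Lemma \ref{Theta-def-lem} combined with the observation that points of $\mc N_{\la,\delta}^0$ are precisely those $(u,v)\in\mc N_{\la,\delta}$ at which $1=t_{\max}(u,v)$.
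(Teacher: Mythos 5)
Your proposal is correct, and it lives in the same circle of ideas as the paper's proof: both are contradiction arguments that consist in rereading Lemma \ref{Theta-def-lem}. The packaging differs slightly. The paper invokes the \emph{conclusion} of Lemma \ref{Theta-def-lem}: the two roots $t_1<t_{\text{max}}(u,v)<t_2$ of $\varphi_{u,v}'(t)=0$ satisfy $\varphi_{u,v}''(t_1)>0>\varphi_{u,v}''(t_2)$, so a point of $\mc N^0_{\la,\delta}$ would force $1\in\{t_1,t_2\}$ and hence $\varphi_{u,v}''(1)\neq 0$, a contradiction. You instead use the identity $\varphi''_{u,v}(1)=m'_{u,v}(1)$ on $\mc N_{\la,\delta}$ to conclude $t_{\text{max}}(u,v)=1$, so that $2B(u,v)=m_{u,v}(t_{\text{max}}(u,v))$, contradicting the strict inequality $2B(u,v)<m_{u,v}(t_{\text{max}}(u,v))$ established \emph{inside} the proof of Lemma \ref{Theta-def-lem} (inequality \eqref{Theta-def6}); the two mechanisms are logically equivalent. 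A small bonus of your version is the explicit treatment of the semitrivial case $u\equiv 0$ or $v\equiv 0$: there $B(u,v)=0$ and the existence of the second root $t_2$ (hence the paper's dichotomy) is not literally available, whereas your computation $\varphi''_{0,v}(1)=(2-q)\delta\int_\Om|v|^q\,\mathrm{d}x>0$ shows such points lie in $\mc N^+_{\la,\delta}$ anyway, tidying up a point the paper leaves implicit.
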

\begin{proof}
We prove this by contradiction, so let $(u,v) \in \mc N_{\la,\delta}^0$. By Lemma \ref{Theta-def-lem} we know that there exist $t_1,t_2>0$ such that $\varphi^{\prime}_{u,v}(t_1)= 0 = \varphi^{\prime}_{u,v}(t_2)$ and $\varphi^{\prime \prime}_{u,v}(t_1)>0>\varphi^{\prime \prime}_{u,v}(t_2)$. But $(u,v) \in \mc N_{\la,\delta}^0$ means that $\varphi^{\prime \prime}_{u,v}(1)=0=\varphi^{\prime}_{u,v}(1)$. This is possible when either $t_1=1$ or $t_2=1$. But this again implies that $\varphi^{\prime \prime}_{u,v}(1)>0$ or $\varphi^{\prime \prime}_{u,v}(1)<0$, a contradiction. \hfill{\QED}
\end{proof}

\section{Existence of minimizers on $\mc N_{\la,\delta}^+$ and $\mc N_{\la,\delta}^-$}
Lastly, in this section we present the proof of Theorem \ref{MT}. We divide this section into two subsections where we prove existence of first and second solutions respectively.
\begin{Lemma}\label{PS-seq-bdd}
Let $\{(u_k,v_k)\}\subset Y$ be a $(PS)_c$ sequence that is
\[I_{\la,\delta}(u_k,v_k)\to c\; \text{in}\; \mb R \; \text{and}\; I_{\la,\delta}^\prime(u_k,v_k) \to 0\; \text{in}\; Y^*\; \text{as}\; k \to \infty. \]
Then $\{u_k,v_k\}$ is bounded in $Y$.
\end{Lemma}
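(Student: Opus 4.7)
The strategy is the standard Brezis–Nirenberg-type trick: form a linear combination of $I_{\la,\delta}(u_k,v_k)$ and the dual pairing $(I'_{\la,\delta}(u_k,v_k),(u_k,v_k))$ whose coefficients are chosen so that the critical Choquard term $B(u_k,v_k)$ cancels exactly. Specifically, I would compute
\[
I_{\la,\delta}(u_k,v_k) - \frac{1}{2\cdot 2^*_\mu}\bigl(I_{\la,\delta}^{\prime}(u_k,v_k),(u_k,v_k)\bigr)
= C_s^n\Bigl(\frac{1}{2}-\frac{1}{2\cdot 2^*_\mu}\Bigr)\|(u_k,v_k)\|^2 - \Bigl(\frac{1}{q}-\frac{1}{2\cdot 2^*_\mu}\Bigr)\Q,
\]
using the cancellation $-\frac{2}{2\cdot 2^*_\mu}+\frac{1}{2\cdot 2^*_\mu}\cdot 2 = 0$ on the $B$-term. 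Because $1<q<2<2\cdot 2^*_\mu$, both of the bracketed constants are strictly positive.

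Next I would invoke the Palais–Smale assumption: the left-hand side equals $c+o(1) + o(1)\|(u_k,v_k)\|$ as $k\to\infty$, since $I_{\la,\delta}(u_k,v_k)\to c$ and $|(I_{\la,\delta}^{\prime}(u_k,v_k),(u_k,v_k))| \le \|I_{\la,\delta}^{\prime}(u_k,v_k)\|_{Y^\ast}\,\|(u_k,v_k)\|$, with $\|I_{\la,\delta}^{\prime}(u_k,v_k)\|_{Y^\ast}\to 0$. For the subcritical term on the right I would apply Hölder's inequality together with the embedding constant from $X_0\hookrightarrow L^{2^*_s}(\Om)$, exactly as already carried out in the proof of Lemma~\ref{func-bdd-below}, to obtain
\[
\Q \le |\Om|^{\frac{2^*_s-q}{2^*_s}} S_s^{-q/2}\bigl(\la^{\frac{2}{2-q}}+\delta^{\frac{2}{2-q}}\bigr)^{\frac{2-q}{2}}\|(u_k,v_k)\|^{q}.
\]

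Putting the two estimates together yields an inequality of the form $A\|(u_k,v_k)\|^2 \le c + o(1) + o(1)\|(u_k,v_k)\| + C\|(u_k,v_k)\|^{q}$ with a positive constant $A$ depending only on $s$, $n$, $\mu$, and a constant $C$ depending on $\la,\delta,q,s,\Om$. Since $q<2$, Young's inequality (or a direct contradiction argument: assume $\|(u_k,v_k)\|\to\infty$ and divide by $\|(u_k,v_k)\|^2$) immediately gives a uniform bound on $\|(u_k,v_k)\|$, completing the proof. There is no real obstacle here; the only point to verify carefully is the coefficient bookkeeping that makes the critical Choquard contribution vanish, which rests on the fact that the scaling homogeneities $2\cdot 2^*_\mu$ of $B(u,v)$ match up neatly with the quadratic form $\|(u,v)\|^2$ and the subcritical term $|u|^q+|v|^q$ with $q<2$.
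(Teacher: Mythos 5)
Your proof is correct; the coefficient bookkeeping works out exactly as you say (in $I_{\la,\delta}-\frac{1}{22^*_\mu}(I_{\la,\delta}^\prime(u_k,v_k),(u_k,v_k))$ the Choquard term cancels, the two remaining coefficients are positive because $1<q<2<22^*_\mu$), and the conclusion follows from $q<2$ via Young's inequality or division by $\|(u_k,v_k)\|^2$. The paper exploits the same underlying cancellation but packages it differently: it argues by contradiction, assumes $\|(u_k,v_k)\|\to\infty$, normalizes $w_k=u_k/\|(u_k,v_k)\|$, $z_k=v_k/\|(u_k,v_k)\|$, uses the compact embedding of $X_0$ into $L^q(\Om)$ to pass to the limit in $\int_\Om(\la|w_k|^q+\delta|z_k|^q)\mathrm{d}x$, and then eliminates $B$ between the rescaled energy and derivative identities to get $C_s^n\|(w_k,z_k)\|^2=\frac{22^*_\mu-q}{q(2^*_\mu-1)}\|(u_k,v_k)\|^{q-2}\int_\Om(\la|w_k|^q+\delta|z_k|^q)\mathrm{d}x+o_k(1)\to 0$, contradicting $\|(w_k,z_k)\|=1$. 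Your direct route is in fact a bit leaner: it needs neither the normalization nor weak convergence nor compactness of the embedding, only H\"older plus the definition of $S_s$ (as in Lemma \ref{func-bdd-below}), whereas the paper's normalized-sequence argument imports machinery that is not strictly necessary here; both proofs rest on the same two facts, namely that the homogeneity $22^*_\mu$ of $B$ can be cancelled against the pairing with $(u_k,v_k)$ and that the perturbation has sublinear growth exponent $q<2$.
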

\begin{proof}
Let $\{(u_k,v_k)\}\subset Y$ be a $(PS)_c$ sequence for $I_{\la,\delta}$ such that
\[I_{\la,\delta}(u_k,v_k)\to c\; \text{in}\; \mb R \; \text{and}\; I_{\la,\delta}^\prime(u_k,v_k) \to 0\; \text{in}\; Y^*\; \text{as}\; k \to \infty. \]
This can be equivalently written as
\begin{equation}\label{PS-seq-bdd1}
\frac{{C_s^n}}{2}\|(u_k,v_k)\|^2-\frac{1}{q}\Q -\frac{1}{2^*_\mu}B(u_k,v_k)= c+o_k(1),
\end{equation}
\begin{equation}\label{PS-seq-bdd2}
{C_s^n}\|(u_k,v_k)\|^2-\Q -2B(u_k,v_k)=o_k(\|(u_k,v_k)\|)
\end{equation}
as $k \to \infty$. We show the boundedness of the sequence $\{(u_k,v_k)\}$ in $Y$ using the method of contradiction. So assume, on contrary, $\|(u_k,v_k)\|\to \infty$ as $k \to \infty$ and set
\[w_k:= \frac{u_k}{\|(u_k,v_k)\|}, \;\; z_k:= \frac{v_k}{\|(u_k,v_k)\|}.\]
Clearly, $\|(w_k,z_k)\|=1$, for all $k$ which implies that there exists a subsequence, still denoted by $\{(w_k,z_k)\}$, such that $(w_k,z_k)\rightharpoonup (w,z)$ weakly in $Y$ as $k\to \infty$, for some $(w,z)\in Y$. By fractional Sobolev embedding results,
 we get
\begin{equation}\label{PS-seq-bdd3}
\int_{\Om}(\la|w_k|^q+ \delta|z_k|^q)\mathrm{d}x \to \int_{\Om}(\la|w|^q+ \delta|z|^q)\mathrm{d}x \; \text{as}\; k \to \infty.
\end{equation}
Putting $u_k= w_k\|(u_k,v_k)\|$ and $v_k=z_k\|(u_k,v_k)\|$ in \eqref{PS-seq-bdd1} and \eqref{PS-seq-bdd2} and solving we get
\[\frac{{C_s^n}}{2} \|(w_k,z_k)\|^2- \frac{\|(u_k,z_k)\|^{{q-2}}}{q}\int_{\Om}(\la|w_k|^q+ \delta|z_k|^q)\mathrm{d}x- \frac{1}{2^*_\mu}\|(u_k,v_k)\|^{22^*_\mu-2}B(w_k,z_k)=o_k(1), \]
\[{C_s^n}\|(w_k,z_k)\|^2-\|(u_k,v_k)\|^{q-2}\int_{\Om}(\la|w_k|^q+ \delta|z_k|^q)\mathrm{d}x -2\|(u_k,v_k)\|^{22^*_\mu-2}B(w_k,z_k)=o_k(1). \]
From above these two equations and \eqref{PS-seq-bdd3}, we get
\begin{align*}
{C_s^n}\|(w_k,z_k)\|^2 &= \frac{(22^*_{\mu}-q)}{q(2^*_\mu-1)} \|(u_k,v_k)\|^{q-2} \int_{\Om}(\la|w_k|^q+ \delta|z_k|^q)\mathrm{d}x + o_k(1)\\
& = \frac{(22^*_{\mu}-q)}{q(2^*_\mu-1)} \|(u_k,v_k)\|^{q-2} \int_{\Om}(\la|w|^q+ \delta|z|^q)\mathrm{d}x + o_k(1).
\end{align*}
Since $1<q<2$ and $\|(u_k,v_k)\|\to \infty$ we get $\|(w_k,z_k)\|^2 \to 0$ as $k\to \infty$ which contradicts $\|(w_k,z_k)\|=1$ for all $k$. This completes the proof. \hfill{\QED}
\end{proof}

\begin{Lemma}\label{unif-lwr-bd}
If  $\{(u_k,v_k)\}$ is a $(PS)_c$ sequence for $I_{\la,\delta}$ with $(u_k,v_k) \rightharpoonup (u,v)$ weakly in $Y$ as $k \to \infty$, then $I_{\la,\delta}^\prime(u,v)=0$. Moreover there exists a positive constant $D_0$ depending on $\mu,q,s,n,S_s$ and $\Om$ such that
\begin{equation}\label{unif-lwr-bd1}
I_{\la,\delta}(u,v) \geq -D_0(\la^{\frac{2}{2-q}}+ \delta^{\frac{2}{2-q}}),
\end{equation}
where
\[D_0:= {\frac{(2-q)(22^*_\mu-q)}{42^*_\mu q}\left[\left( \frac{22^*_\mu {C_s^n}S_s(n-\mu+2s)}{(2n-\mu)(22^*_\mu-q)}\right)^{-\frac{q}{2}}|\Om|^{\frac{2^*_s-q}{2^*_s}}\right]^{\frac{2}{2-q}}}. \]
\end{Lemma}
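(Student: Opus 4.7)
The plan splits along the two separate assertions in the lemma.

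\textbf{Step 1 (the weak limit is a critical point).} By Lemma \ref{PS-seq-bdd} the sequence $\{(u_k,v_k)\}$ is bounded in $Y$, so up to a subsequence $(u_k,v_k)\rightharpoonup (u,v)$ in $Y$, $u_k\to u$ and $v_k\to v$ strongly in $L^r(\Om)$ for $r\in[1,2^*_s)$, and a.e.\ in $\Om$. I would then pass to the limit in $(I_{\la,\delta}^\prime(u_k,v_k),(\phi,\psi))\to 0$ for arbitrary $(\phi,\psi)\in C_0^\infty(\Om)\times C_0^\infty(\Om)$. The bilinear Gagliardo term converges by weak convergence in $X_0$; the subcritical terms $\int_\Om|u_k|^{q-2}u_k\phi\,\mathrm{d}x$ converge by the Rellich compact embedding. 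For the nonlocal critical term I use that $2^*_\mu\cdot\tfrac{2n}{2n-\mu}=2^*_s$, so $|v_k|^{2^*_\mu}$ is bounded in $L^{2n/(2n-\mu)}(\Om)$ and converges a.e., hence weakly in that space; by Proposition \ref{HLS} the Riesz potential $|x|^{-\mu}\ast |v_k|^{2^*_\mu}$ then converges weakly in $L^{2n/\mu}(\mb R^n)$ to $|x|^{-\mu}\ast|v|^{2^*_\mu}$. Since $\phi$ has compact support, $|u_k|^{2^*_\mu-2}u_k\phi$ converges strongly in $L^{2n/(2n-\mu)}(\Om)$ to $|u|^{2^*_\mu-2}u\phi$ (Vitali, using a.e.\ convergence together with the equi-integrability coming from the uniform $L^{2^*_s}$ bound). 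Pairing weak $\times$ strong gives convergence of the Choquard term, and density of $C_0^\infty\times C_0^\infty$ in $Y$ forces $I_{\la,\delta}^\prime(u,v)=0$.

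\textbf{Step 2 (the lower bound).} From $(I_{\la,\delta}^\prime(u,v),(u,v))=0$ I isolate
\begin{equation*}
2B(u,v)=C_s^n\|(u,v)\|^2-\int_\Om(\la|u|^q+\delta|v|^q)\,\mathrm{d}x,
\end{equation*}
and substitute back into $I_{\la,\delta}(u,v)$ to eliminate $B(u,v)$, obtaining
\begin{equation*}
I_{\la,\delta}(u,v)=\frac{C_s^n(2^*_\mu-1)}{2\cdot 2^*_\mu}\|(u,v)\|^2-\frac{2\cdot 2^*_\mu-q}{2q\cdot 2^*_\mu}\int_\Om(\la|u|^q+\delta|v|^q)\,\mathrm{d}x.
\end{equation*}
The integral is controlled by H\"older's inequality and the embedding $\|\cdot\|_{L^{2^*_s}}\le S_s^{-1/2}\|\cdot\|$, producing the bound $S_s^{-q/2}|\Om|^{(2^*_s-q)/2^*_s}(\la\|u\|^q+\delta\|v\|^q)$. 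Young's inequality with conjugate pair $(2/q,2/(2-q))$ then gives, for any $\alpha>0$,
\begin{equation*}
\la\|u\|^q\le \frac{q\alpha^{2/q}}{2}\|u\|^2+\frac{(2-q)\alpha^{-2/(2-q)}}{2}\la^{2/(2-q)},
\end{equation*}
and the analogous bound for $\delta\|v\|^q$. I would then choose $\alpha$ so that the $\|(u,v)\|^2$-coefficient produced by Young exactly cancels the quadratic coefficient $\tfrac{C_s^n(2^*_\mu-1)}{2\cdot 2^*_\mu}$; this forces $\alpha^{2/q}$ to equal $\tfrac{2\cdot 2^*_\mu\, C_s^n\, S_s(n-\mu+2s)}{(2n-\mu)(2\cdot 2^*_\mu-q)}\cdot|\Om|^{-(2^*_s-q)/2^*_s}$ (using $(2^*_\mu-1)/2^*_\mu=(n-\mu+2s)/(2n-\mu)$), and plugging this optimal $\alpha$ back collapses the estimate to exactly $-D_0(\la^{2/(2-q)}+\delta^{2/(2-q)})$ with the constant $D_0$ displayed in the statement.

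\textbf{Main obstacle.} Step 2 is essentially an algebraic computation with an optimal choice of Young parameter; the delicate point is Step 1, specifically the stability of the nonlocal critical nonlinearity under only weak convergence. Because $2^*_\mu$ is the HLS-critical exponent, $|v_k|^{2^*_\mu}$ saturates the $L^{2n/(2n-\mu)}$ bound so that strong convergence there is unavailable; the argument succeeds only because the compactly supported factor $|u_k|^{2^*_\mu-2}u_k\phi$ is paired against the Riesz convolution via a weak--strong duality, which is why testing against $C_0^\infty$ (rather than against $(u,v)$ itself) is essential.
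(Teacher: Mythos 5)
Your proposal is correct and follows essentially the same route as the paper: the same passage to the limit in the weak formulation (weak convergence for the Gagliardo term, compactness for the subcritical term, and the HLS mapping property of the Riesz potential paired against the strongly convergent subcritical power $|u_k|^{2^*_\mu-2}u_k$ — the paper phrases this weak–strong pairing via distributional limits against $L^\infty$ test functions rather than density of $C_0^\infty$), followed by the same Nehari-type identity, H\"older plus the Sobolev constant $S_s$, and Young's inequality with exponents $2/q$ and $2/(2-q)$ with the free parameter chosen to cancel the quadratic term, yielding exactly $D_0$. The only slip is cosmetic: with your grouping (Young applied to $\la\|u\|^q$ after extracting the prefactor $S_s^{-q/2}|\Om|^{(2^*_s-q)/2^*_s}$), the cancellation forces $\alpha^{2/q}=\frac{2\cdot 2^*_\mu C_s^n(n-\mu+2s)}{(2n-\mu)(2\cdot 2^*_\mu-q)}\,S_s^{q/2}\,|\Om|^{-\frac{2^*_s-q}{2^*_s}}$, i.e.\ $S_s^{q/2}$ rather than $S_s$, after which the estimate does collapse to the stated $D_0$.
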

\begin{proof}
Let $\{(u_k.v_k)\} \subset Y$ be a $(PS)_c$ sequence for $I_{\la,\delta}$ such that $(u_k,v_k) \rightharpoonup (u,v)$ weakly in $Y$ as $k \to \infty$. This implies $I_{\la,\delta }^\prime (u_k,v_k) = o_k(1)\; \text{in} \; Y^*\; \text{as}\; k \to \infty$.
Let $(\phi,\psi) \in Y$. From weak convergence it follows that
\begin{equation}\label{unif-lwr-bd5}
\lim_{k \to \infty} \langle u_k,\phi\rangle =\langle u,\phi \rangle \; \text{and}\;  \lim_{k \to \infty} \langle v_k,\psi\rangle = \langle v,\psi \rangle.
\end{equation}

\noi For $q^\prime = \displaystyle\frac{q}{q-1}$  we also have
\begin{equation}\label{unif-lwr-bd3}
\begin{split}
|u_k|^{q-2}u_k \rightharpoonup |u|^{q-2}u, \; |v_k|^{q-2}v_k \rightharpoonup |v|^{q-2}v\; \text{in}\; L^{q^\prime}(\Om) \; \text{and}\;u_k \rightharpoonup u,\;  v_k \rightharpoonup v\; \text{in}\; L^{2^*_s}(\Om).
\end{split}
\end{equation}
as $k \to \infty$, thanks to the embedding of $X_0$ into $L^m(\Om)$ for all $1 \leq  m \leq 2^*_s$.
Since we assumed $\phi,\psi \in X_0$ which is contained in $ L^{q}(\Om)\cap L^{2^*_s}(\Om)$, so from \eqref{unif-lwr-bd3} it follows that as $k \to \infty$
\begin{equation}\label{unif-lwr-bd6}
\int_{\Om}|u_k|^{q-2}u_k\phi\mathrm{d}x \to \int_{\Om}|u|^{q-2}u\phi\mathrm{d}x.
\end{equation}
Also since $2^*_\mu-1 = \displaystyle\frac{n-\mu+2s}{n-2s}$  and $|u_k|^{2^*_\mu} \rightharpoonup  |u|^{2^*_\mu}$, $ |v_k|^{2^*_\mu}\rightharpoonup  |v|^{2^*_\mu} \; \text{in}\; L^{\frac{2n}{2n-\mu}}(\Om)$, we get
\[|u_k|^{2^*_\mu-2}u_k \rightharpoonup  |u|^{2^*_\mu-2}u\; \text{and} |v_k|^{2^*_\mu-2}v_k \rightharpoonup  |v|^{2^*_\mu-2}v \; \text{in}\; L^{\frac{2n}{n-\mu+2s}}(\Om).\]
By Hardy-Littlewood-Sobolev inequality, {the Riesz potential defines a linear and continuous map from $L^{\frac{2n}{2n-\mu}}(\Om)$ to $L^{\frac{2n}{\mu}}(\Om)$ which gives}
\begin{equation}\label{unif-lwr-bd9}
|x|^{-\mu}\ast|u_k|^{2^*_\mu} \rightharpoonup |x|^{-\mu}\ast|u|^{2^*_\mu} \; \text{and}\; |x|^{-\mu}\ast|v_k|^{2^*_\mu} \rightharpoonup |x|^{-\mu}\ast|v|^{2^*_\mu} \; \text{in}\; L^{\frac{2n}{\mu}}(\Om).
\end{equation}
{This implies that the sequences $(|x|^{-\mu}\ast |u_k|^{2^*_\mu})|v_k|^{2^*_\mu-2}v_k$ and $(|x|^{-\mu}\ast |v_k|^{2^*_\mu})|u_k|^{2^*_\mu-2}u_k$ converges weakly in $L^{\frac{2n}{n+2s}}(\Om)$. Through Sobolev embedding we know that
\begin{equation}\label{unif-lwr-bd10}
|u_k|^{2^*_\mu-2}u_k \to  |u|^{2^*_\mu-2}u\; \text{and} |v_k|^{2^*_\mu-2}v_k \to |v|^{2^*_\mu-2}v \; \text{in}\; L^{\frac{2n}{2n-\mu}}(\Om)
\end{equation}
Taking into account \eqref{unif-lwr-bd9} and \eqref{unif-lwr-bd10}, for any $\tilde\psi \in L^\infty(\Om)$ we obtain
\begin{align*}
\int_{\Om}(|x|^{-\mu}\ast |u_k|^{2^*_\mu})|v_k|^{2^*_\mu-2}v_k \tilde\psi~\mathrm{d}x &\to \int_{\Om}(|x|^{-\mu}\ast |u|^{2^*_\mu})|v|^{2^*_\mu-2}v\tilde\psi~\mathrm{d}x \\
\; \text{and}\; \int_{\Om} (|x|^{-\mu}\ast |v_k|^{2^*_\mu})|u_k|^{2^*_\mu-2}u_k\tilde\psi~\mathrm{d}x & \to \int_{\Om}(|x|^{-\mu}\ast |v|^{2^*_\mu})|u|^{2^*_\mu-2}u\tilde\psi~\mathrm{d}x .
\end{align*}
Therefore the sequences $(|x|^{-\mu}\ast |u_k|^{2^*_\mu})|v_k|^{2^*_\mu-2}v_k$ and $(|x|^{-\mu}\ast |v_k|^{2^*_\mu})|u_k|^{2^*_\mu-2}u_k$ converges in the distributional sense. Since the  weak limit and the distributional limit coincides, for $\phi,\psi \in X_0(\Om)\subset L^{2^*_s}(\Om)$, we get that as $k \to \infty$
\begin{equation}\label{unif-lwr-bd4}
\begin{split}
\int_\Om (|x|^{-\mu}\ast |u_k|^{2^*_\mu})|v_k|^{2^*_\mu-2}v_k\psi~\mathrm{d}x &\to \int_\Om (|x|^{-\mu}\ast |u|^{2^*_\mu})|v|^{2^*_\mu-2}v\psi~\mathrm{d}x,\\
\int_\Om (|x|^{-\mu}\ast |v_k|^{2^*_\mu})|u_k|^{2^*_\mu-2}u_k\phi~\mathrm{d}x & \to \int_\Om(|x|^{-\mu}\ast |v|^{2^*_\mu})|u|^{2^*_\mu-2}u\phi~\mathrm{d}x.
\end{split}
\end{equation}}
So using \eqref{Iprime}, \eqref{unif-lwr-bd5}, \eqref{unif-lwr-bd6} and \eqref{unif-lwr-bd4}  we get $(I_{\la,\delta}^\prime(u_k,v_k)- I_{\la,\delta}^\prime(u,v), (\phi,\psi)) \to 0$ as $k \to \infty$, for all $(\phi,\psi) \in Y $ which implies that $I_{\la,\delta}^\prime(u,v) =0$. Therefore $(u,v)$ is a weak solution of $(P_{\la,\delta})$ and $(u,v) \in \mc N_{\la,\delta}$. That is
\[{C_s^n}\|(u,v)\|^2= \q +2 B(u,v) \]
which gives
\begin{equation}\label{unif-lwr-bd7}
I_{\la,\delta}(u,v) = \frac{(2^*_\mu-1){C_s^n}}{22^*_\mu}\|(u,v)\|^2 - \frac{22^*_\mu -q}{22^*_\mu q} \q.
\end{equation}
Let $D = \left[\displaystyle \frac{2}{q}\cdot\frac{n-\mu+2s}{2(2n-\mu)} \left( \frac{1}{q}- \frac{1}{22^*_\mu}\right)^{-1}\right]$
Using H\"{o}lder's inequality, fractional Sobolev inequality, definition of $S_s$ and Young's inequality we get the following estimate
\begin{equation}\label{unif-lwr-bd8}
\begin{split}
&\q \leq |\Om|^{\frac{2^*_s-q}{2^*_s}}S_s^{-\frac{q}{2}} (\la \|u\|^q + \delta \|v\|^q) \\
& = \left( D^{\frac{q}{2}}{(C_s^n)^{\frac{q}{2}}} \|u\|^q \right) \left( D^{-\frac{q}{2}} \la|\Om|^{\frac{2^*_s-q}{2^*_s}}({C_s^n}S_s)^{-\frac{q}{2}} \right)+ \left( D^{\frac{q}{2}} {(C_s^n)^{\frac{q}{2}}} \|v\|^q \right) \left( D^{-\frac{q}{2}} \delta |\Om|^{\frac{2^*_s-q}{2^*_s}} ({C_s^n}S_s)^{-\frac{q}{2}}\right) \\
& \leq \frac{n-\mu+2s}{2(2n-\mu)}  \left( \frac{1}{q}- \frac{1}{22^*_\mu}\right)^{-1} {C_s^n}(\|u\|^2 + \|v\|^2)+ \tilde D \left(\la^{\frac{2}{2-q}}+ \delta^{\frac{2}{2-q} }\right)\\
&=  \frac{n-\mu+2s}{2(2n-\mu)}  \left( \frac{1}{q}- \frac{1}{22^*_\mu}\right)^{-1}{C_s^n} \|(u,v)\|^2 + \tilde D \left(\la^{\frac{2}{2-q}}+ \delta^{\frac{2}{2-q} }\right),
\end{split}
\end{equation}
where $\tilde D := \displaystyle\frac{2-q}{2}\left(D^{-\frac{q}{2}}|\Om|^{\frac{2^*_s-q}{2^*_s}} ({C_s^n}S_s)^{-\frac{q}{2}}\right)^{\frac{2}{2-q}}$.
Using \eqref{unif-lwr-bd8} in \eqref{unif-lwr-bd7}, we finally obtain \eqref{unif-lwr-bd1} with $D_0 = \left(\displaystyle\frac{22^*_\mu-q}{22^*_\mu q}\right) \tilde D$. This completes the proof.\hfill{\QED}
\end{proof}

\noi As a consequence of Lemma \ref{N_0-empty} we infer that for any $\la, \delta$ satisfying $0 < \la^{\frac{2}{2-q}} + \delta^{\frac{2}{2-q} }< \Theta$,
\[\mc N_{\la,\delta}= \mc N_{\la,\delta}^+ \cup \mc N_{\la,\delta}^-. \]
In spirit of Lemma \ref{func-bdd-below}, we define the following
\[l_{\la,\delta}= \inf_{\mc N_{\la,\delta}} I_{\la,\delta} \; \text{and} \;l_{\la,\delta}^\pm = \inf_{\mc N_{\la,\delta}^\pm} I_{\la,\delta}. \]
Then we have the following result.

\begin{Lemma}\label{inf-pos-neg}
The following holds true:
\begin{enumerate}
 \item[(i)]If $0 < \la^{\frac{2}{2-q}} + \delta^{\frac{2}{2-q}}< \Theta$, then {$ l_{\la,\delta} \leq l_{\la,\delta}^+ < 0$},
 \item[(ii)] $\inf \{\|(u,v)\|:\; (u,v)\in \mc N_{\la,\delta}^- \}>0$ and $\sup\{\|(u,v)\|:\; (u,v)\in \mc N_{\la,\delta}^-, \; I_{\la,\delta}(u,v)\leq M \}< +\infty$ for each $M>0$.
     \end{enumerate}
\end{Lemma}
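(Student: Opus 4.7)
The plan is to treat the two parts in turn, both by exploiting the algebraic consequences of the Nehari identity $C_s^n\|(u,v)\|^2=\q+2B(u,v)$ together with the sign of $\varphi''_{u,v}(1)$ on the two branches.

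For (i), the inequality $l_{\la,\delta}\le l_{\la,\delta}^+$ is immediate from $\mc N_{\la,\delta}^+\subset \mc N_{\la,\delta}$, which follows from Lemma \ref{N_0-empty}. To establish $l_{\la,\delta}^+<0$, I first note that $\mc N_{\la,\delta}^+$ is nonempty by Lemma \ref{Theta-def-lem}. For any $(u,v)\in \mc N_{\la,\delta}^+$ I use the Nehari identity to eliminate $\q$ from $I_{\la,\delta}(u,v)$, obtaining
\[ I_{\la,\delta}(u,v)=-\frac{(2-q)C_s^n}{2q}\|(u,v)\|^2+\frac{22^*_\mu-q}{2q\,2^*_\mu}\bigl(2B(u,v)\bigr). \]
The condition $\varphi''_{u,v}(1)>0$ rewrites (via the same Nehari identity) as $2B(u,v)<\frac{(2-q)C_s^n}{22^*_\mu-q}\|(u,v)\|^2$, and substituting this bound simplifies the expression to
\[ I_{\la,\delta}(u,v)< -\frac{(2-q)(2^*_\mu-1)C_s^n}{2q\,2^*_\mu}\|(u,v)\|^2<0, \]
so $l_{\la,\delta}^+<0$.

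For (ii), the lower bound on $\|(u,v)\|$ over $\mc N_{\la,\delta}^-$ is obtained by the same algebra with the reverse sign. For $(u,v)\in\mc N_{\la,\delta}^-$, $\varphi''_{u,v}(1)<0$ gives
\[ 2B(u,v)>\frac{(2-q)C_s^n}{22^*_\mu-q}\|(u,v)\|^2, \]
while the definition of $\tilde S_s^H$ yields $2B(u,v)\le 2(\tilde S_s^H)^{-2^*_\mu}\|(u,v)\|^{22^*_\mu}$. Since $(u,v)\ne (0,0)$, dividing by $\|(u,v)\|^2$ and rearranging forces
\[ \|(u,v)\|^{22^*_\mu-2}> \frac{(2-q)C_s^n(\tilde S_s^H)^{2^*_\mu}}{2(22^*_\mu-q)}>0, \]
which is the desired uniform positive lower bound. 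For the upper bound, I directly invoke the coercivity estimate already displayed in the proof of Lemma \ref{func-bdd-below}: for any $(u,v)\in\mc N_{\la,\delta}$,
\[ I_{\la,\delta}(u,v)\ge C_s^n\Bigl(\frac{1}{2}-\frac{1}{22^*_\mu}\Bigr)\|(u,v)\|^2-K\|(u,v)\|^q \]
with $K>0$ explicit in $\la,\delta,q,s,n,\Om$. Imposing $I_{\la,\delta}(u,v)\le M$ and using $1<q<2$ immediately yields a uniform bound on $\|(u,v)\|$.

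I do not anticipate any real obstacle. The entire argument is algebraic: one combines the Nehari identity, the sign of $\varphi''_{u,v}(1)$, the upper bound on $B(u,v)$ coming from $\tilde S_s^H$ (via Lemma \ref{reltn}), and the coercivity bound already in hand. The only care needed is to consistently use the Nehari identity to trade $\q$ for $\|(u,v)\|^2$ and $2B(u,v)$ so that the signed estimates come out cleanly on each branch.
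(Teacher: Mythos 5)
Your proposal is correct and follows essentially the same route as the paper: on $\mc N_{\la,\delta}^+$ you use the Nehari identity to write $I_{\la,\delta}$ in terms of $\|(u,v)\|^2$ and $B(u,v)$ and then insert the bound on $B(u,v)$ coming from $\varphi''_{u,v}(1)>0$ to get strict negativity, and on $\mc N_{\la,\delta}^-$ you combine $\varphi''_{u,v}(1)<0$ with $B(u,v)\leq (\tilde S_s^H)^{-2^*_\mu}\|(u,v)\|^{22^*_\mu}$ for the lower bound and the coercivity estimate of Lemma \ref{func-bdd-below} for the upper bound, exactly as in the paper's proof. No gaps.
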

\begin{proof}
\begin{enumerate}
\item[(i)] {Let $(u,v)\in \mc N_{\la,\delta}^+$ implying that $\varphi_{u,v}^\prime(1)=0$ and $\varphi_{u,v}^{\prime \prime}(1)>0$. Therefore
\[\frac{(2-q){C_s^n}}{2(22^*_\mu-q)}\|(u,v)\|^2 > B(u,v). \]
Using this we deduce that
\begin{align*}
I_{\la,\delta}(u,v) &= \left(\frac12 - \frac{1}{q} \right){C_s^n}\|(u,v)\|^2 + \left( \frac{2}{q}- \frac{1}{2^*_\mu}\right)B(u,v)\\
& < \left( \frac{q-2}{2q} + \frac{2-q}{22^*_\mu q}\right){C_s^n}\|(u,v)\|^2 = \frac{2-q}{2q} \left(\frac{1}{2^*_\mu}-1\right)\|(u,v)\|^2 <0.
\end{align*}
This alongwith the definition of $l_{\la,\delta}$ and $l_{\la,\delta}^+$ implies that $l_{\la,\delta} \leq l_{\la,\delta}^+ <0$.}
\item[(ii)] Let $(u,v)\in \mc N_{\la,\delta}^-$ then using Lemma \ref{ineq} and \eqref{ineq1} we get
\begin{align*}
0> \varphi_{u,v}^{\prime \prime}(1) \geq {(2-q){C_s^n}\|(u,v)\|^2 - 2 (22^*_\mu- q)  ({\tilde S_s^H})^{-2^*_\mu}\|(u,v)\|^{22^*_\mu}}.
\end{align*}
This gives
\[\|(u,v)\|\geq {\left( \frac{(2-q){C_s^n}}{2(22^*_\mu-q)(S_s^H)^{-2^*_\mu}}\right)^{\frac{1}{22^*_\mu-2}}}>0 \]
{which implies that $\inf \{\|(u,v)\|:\; (u,v)\in \mc N_{\la,\delta}^- \}>0$}.
 Therefore $\inf \{\|(u,v)\|:\; (u,v)\in \mc N_{\la,\delta}^- \}>0$.
Now let $I_{\la,\delta}(u,v)\leq M$ for some $M>0$ then an easy computation yields
\[\left(\frac{1}{2}- \frac{1}{22^*_\mu}\right){C_s^n} \|(u,v)\|^2 - K_{\la,\delta}\left(\frac{1}{q}- \frac{1}{22^*_\mu}\right)\|(u,v)\|^q\leq M  \]
where $K_{\la,\delta}= S_s^{-\frac{q}{2}} |\Om|^{\frac{2^*_s-q}{2^*_s} }(\la+\delta)$ which completes the proof. \hfill{\QED}
\end{enumerate}
\end{proof}

\noi Our next result is established by using the implicit function theorem and it plays a crucial role in proving Theorem \ref{PS-seq}.

\begin{Proposition}\label{IFT-N}
Assume $0 < \la^{\frac{2}{2-q}} + \delta^{\frac{2}{2-q}}< \Theta$ and $w \;= \;(u,v) \in \mc N_{\la,\delta}$. Then there exist $\e>0$ and a differentiable function $\zeta : B_\e(0) \subset Y \to \mb R^+$ ($B_\e(0)$ denotes ball of radius $\e$ with center origin) such that $\zeta(0)=1$, $\zeta(z)(w-z) \in \mc N_{\la,\delta}$ and
\begin{equation}\label{IFT-N1}
(\zeta^\prime(0),z) = - \frac{2(\langle u,z_1\rangle + \langle v, z_2\rangle) - T_{\la,\delta}(w,z) - 2 M(z) }{(2-q){C_s^n} \|(u,v)\|^2- 2(22^*_\mu)B(u,v)}
\end{equation}
for all $z=(z_1,z_2) \in B_\e(0)$, where
\begin{equation*}\label{IFT-N2}
\begin{split}
T_{\la,\delta}(w,z)&= q\int_\Om (\la |u|^{q-2}uz_1 + \delta |v|^{q-2}v z_2)\mathrm{d}x,\\
M(z) & = \int_{\Om} ((|x|^{-\mu}\ast |v|^{2^*_\mu})|u|^{2^*_\mu-2}u z_1 + (|x|^{-\mu}\ast |u|^{2^*_\mu})|v|^{2^*_\mu-2}v z_2 )\mathrm{d}x.
\end{split}
\end{equation*}
\end{Proposition}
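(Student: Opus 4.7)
The plan is to apply the implicit function theorem to a single scalar equation that encodes the Nehari constraint. For $w = (u,v) \in \mc N_{\la,\delta}$, define $F\colon \mb R \times Y \to \mb R$ by
\[
F(t, z) := t\,C_s^n \|w - z\|^2 - t^{q-1}\!\int_\Om (\la|u-z_1|^q + \delta|v-z_2|^q)\,\mathrm{d}x - 2\,t^{22^*_\mu - 1} B(u-z_1, v-z_2),
\]
so that $F(t,z) = 0$ together with $t(w-z)\neq (0,0)$ is equivalent to $t(w-z) \in \mc N_{\la,\delta}$. By construction $F(1,0) = \varphi'_{u,v}(1) = 0$ since $w \in \mc N_{\la,\delta}$. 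The map $F$ is $C^1$: the quadratic piece is smooth, the $L^q$-piece is $C^1$ because $1 < q < 2$, and the Choquard piece is $C^1$ as a consequence of the Hardy--Littlewood--Sobolev inequality combined with dominated convergence applied to the Riesz convolution.

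The decisive step is the nondegeneracy $\partial_t F(1,0) \neq 0$. A direct computation gives $\partial_t F(1,0) = \varphi''_{u,v}(1)$; under the standing hypothesis $0 < \la^{\frac{2}{2-q}} + \delta^{\frac{2}{2-q}} < \Theta$, Lemma \ref{N_0-empty} yields $\mc N^0_{\la,\delta} = \emptyset$, so $\varphi''_{u,v}(1) \neq 0$ for every $w \in \mc N_{\la,\delta}$. This is the only subtle point in the argument, and it is already handled. The standard implicit function theorem then produces $\e > 0$ and a $C^1$ map $\zeta \colon B_\e(0) \to \mb R$ with $\zeta(0) = 1$ and $F(\zeta(z), z) = 0$ on $B_\e(0)$. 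Shrinking $\e$ if necessary, continuity of $\zeta$ and of the norm forces $\zeta(z) > 0$ and $w - z \neq (0,0)$; hence $\zeta(z)(w-z) \in \mc N_{\la,\delta}$, as claimed.

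Finally, the formula for $(\zeta'(0), z)$ follows by differentiating the identity $F(\zeta(z), z) \equiv 0$ at $z = 0$, which yields
\[
(\zeta'(0), z) \;=\; -\,\frac{\partial_z F(1, 0)\cdot z}{\partial_t F(1, 0)}.
\]
Differentiating each summand of $F$ in the $z$-variable at the point $(1,0)$ produces the three contributions of the numerator: the norm term gives the inner-product piece $2C_s^n(\langle u, z_1\rangle + \langle v, z_2\rangle)$; the $L^q$-term produces $T_{\la,\delta}(w,z)$; and the Choquard term yields the expression involving $M(z)$, after differentiating under the integral sign (legitimate by the integrability supplied by Hardy--Littlewood--Sobolev). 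Combining these with the denominator $\varphi''_{u,v}(1)$ computed above delivers the claimed formula for $(\zeta'(0), z)$ and completes the proof.
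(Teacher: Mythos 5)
Your proposal is correct and follows essentially the same route as the paper: you apply the implicit function theorem to the scalar Nehari-constraint map $(t,z)\mapsto (I_{\la,\delta}'(t(w-z)),t(w-z))$ (your $F$ is just this map divided by $t$, an immaterial normalization), verify nondegeneracy via $\partial_t F(1,0)=\varphi''_{u,v}(1)\neq 0$ using Lemma \ref{N_0-empty}, and obtain \eqref{IFT-N1} by implicit differentiation, exactly as in the paper's proof.
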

\begin{proof}
For $w = (u,v) \in \mc N_{\la,\delta}$, let us define $\mathfrak{F}_w: \mb R^+ \times Y \to \mb R^n$ by
\begin{align*}
\mathfrak{F}_w(\rho,z) &:= (I_{\la,\delta}^{\prime}(\rho(w-z)),(\rho(w-z)))\\
& = \rho^2 {C_s^n} \|(u-z_1, v-z_2)\|^2 - \rho^q \int_{\Om} (\la |u-z_1|^q+ \delta |v-z_2|^q)\mathrm{d}x - 2\rho^{22^*_\mu}B(u-z_1,v-z_2)
\end{align*}
where $\rho \in \mb R^+$ and $z=(z_1,z_2) \in Y$. Then clearly $\mathfrak{F}_w(1,(0,0)) = (I_{\la,\delta}^\prime(w),w)=0$ since $w \in \mc N_{\la,\delta}$. Also \begin{align*}
\frac{d}{d\rho}\mathfrak{F}_w(1,(0,0)) &= 2{C_s^n} \|(u,v)\|^2 -q \q - 2(22^*_\mu) B(u,v)\\
&= (2-q){C_s^n} \|(u,v)\|^2 - 2(22^*_\mu-q)\q  = \varphi_{u,v}^{\prime \prime}(1) \neq 0
\end{align*}
because of Lemma \ref{N_0-empty}. Therefore we can apply the implicit function theorem to obtain a $\e>0$ and a differentiable map $ \zeta: B_\e(0) \subset Y \to \mb R^+$ with $\zeta(0)=1$ and satisfies \eqref{IFT-N1}. Also $\mathfrak{F}_w(\zeta)=0$ for all $z \in B_\e(0)$ which is equivalent to
\[( I_{\la,\delta}^\prime(\zeta(z)(w-z)),\zeta(z)(w-z)) =0, \; \text{for all}\; z \in B_\e(0),\]
that is $\zeta(z)(w-z) \in \mc N_{\la,\delta}$. \hfill{\QED}
\end{proof}


\begin{Theorem}\label{PS-seq}
 If $0<  \la^{\frac{2}{2-q}} + \delta^{\frac{2}{2-q}}< \Theta$ then there exists a $(PS)_{l_{\la,\delta}}$ sequence $\{(u_k,v_k)\} \subset \mc N_{\la,\delta}$ for $I_{\la,\delta}$.
\end{Theorem}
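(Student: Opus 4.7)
The plan is to apply Ekeland's variational principle on the manifold $\mc N_{\la,\delta}$ and then convert the resulting $\epsilon$-minimizer condition into a gradient estimate for $I_{\la,\delta}$ by exploiting the local $C^1$ parametrization of $\mc N_{\la,\delta}$ supplied by Proposition \ref{IFT-N}.

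Since $\mc N_{\la,\delta}$ is easily seen to be closed in $Y$ and $I_{\la,\delta}$ is bounded below on it by Lemma \ref{func-bdd-below}, the infimum $l_{\la,\delta}$ is finite and Ekeland's principle produces a sequence $\{(u_k,v_k)\}\subset\mc N_{\la,\delta}$ satisfying
\[
I_{\la,\delta}(u_k,v_k)\le l_{\la,\delta}+\tfrac{1}{k},\qquad I_{\la,\delta}(w)\ge I_{\la,\delta}(u_k,v_k)-\tfrac{1}{k}\|w-(u_k,v_k)\|\ \ \forall w\in\mc N_{\la,\delta}.
\]
Coercivity gives the uniform bound $\|(u_k,v_k)\|\le R$, while $l_{\la,\delta}<0$ from Lemma \ref{inf-pos-neg}(i), combined with $I_{\la,\delta}(0,0)=0$ and continuity, forces $\|(u_k,v_k)\|\ge r>0$. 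Fixing $(\phi,\psi)\in Y$ with $\|(\phi,\psi)\|=1$ and invoking Proposition \ref{IFT-N} at $(u_k,v_k)$ yields a $C^1$ map $\zeta_k$ near $0$ with $\zeta_k(0)=1$ and $z_{k,\rho}:=\zeta_k(\rho(\phi,\psi))\bigl((u_k,v_k)-\rho(\phi,\psi)\bigr)\in\mc N_{\la,\delta}$ for $\rho\in(0,\epsilon_k)$.

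Plugging $z_{k,\rho}$ into the Ekeland inequality and Taylor-expanding $I_{\la,\delta}$ and $\zeta_k$ about $0$, one obtains
\[
z_{k,\rho}-(u_k,v_k)=-\rho(\phi,\psi)+\rho(\zeta_k'(0),(\phi,\psi))(u_k,v_k)+o(\rho).
\]
Since $\langle I_{\la,\delta}'(u_k,v_k),(u_k,v_k)\rangle=0$ on $\mc N_{\la,\delta}$, the $(u_k,v_k)$-component of the perturbation drops out of the linear approximation of $I_{\la,\delta}(z_{k,\rho})-I_{\la,\delta}(u_k,v_k)$, leaving
\[
-\rho\,\langle I_{\la,\delta}'(u_k,v_k),(\phi,\psi)\rangle+o(\rho)\ \ge\ -\frac{\rho}{k}\bigl(1+R\,\|\zeta_k'(0)\|_{Y^*}\bigr)+o(\rho/k).
\]
Dividing by $\rho>0$, sending $\rho\to 0^+$, and replacing $(\phi,\psi)$ by $-(\phi,\psi)$ yields, uniformly in $\|(\phi,\psi)\|=1$,
\[
|\langle I_{\la,\delta}'(u_k,v_k),(\phi,\psi)\rangle|\le\frac{C}{k}\bigl(1+\|\zeta_k'(0)\|_{Y^*}\bigr),
\]
so $\|I_{\la,\delta}'(u_k,v_k)\|_{Y^*}\to 0$ provided $\sup_k\|\zeta_k'(0)\|_{Y^*}<\infty$.

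The crux of the argument, and the main obstacle, is this last uniform bound. By formula \eqref{IFT-N1}, the numerator of $(\zeta_k'(0),z)$ is a linear functional in $z$ whose dual norm is controlled in terms of $\|(u_k,v_k)\|\le R$ (each of $\langle u_k,\cdot\rangle$, $T_{\la,\delta}(w_k,\cdot)$ and $M(\cdot)$ is estimated by H\"older and the embedding $X_0\hookrightarrow L^{2^*_s}$). The denominator equals $\varphi''_{u_k,v_k}(1)$, which is nonzero by Lemma \ref{N_0-empty}; what must be secured is a uniform lower bound $|\varphi''_{u_k,v_k}(1)|\ge c>0$. This I would obtain by revisiting Lemma \ref{Theta-def-lem}: the strict inequality \eqref{Theta-def6}, valid throughout $\la^{\frac{2}{2-q}}+\delta^{\frac{2}{2-q}}<\Theta$, is actually quantitative, giving $m_{u_k,v_k}(t_{\max}(u_k,v_k))-2B(u_k,v_k)\ge c_1\|(u_k,v_k)\|^{22^*_\mu}$. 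Combined with $r\le\|(u_k,v_k)\|\le R$ from Step~1, this keeps the critical point $t=1$ of $\varphi_{u_k,v_k}$ at a uniform distance from the maximum point $t_{\max}(u_k,v_k)$ of $m_{u_k,v_k}$ and, via the strict second-order information on $m_{u_k,v_k}$ at $t_{\max}$, translates into the required lower bound on $|\varphi''_{u_k,v_k}(1)|$, closing the PS-sequence argument.
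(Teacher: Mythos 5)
Your proposal follows essentially the same route as the paper: Ekeland's principle on $\mc N_{\la,\delta}$, the local parametrization $\zeta_k$ from Proposition \ref{IFT-N}, the Taylor/translation argument converting the $\e$-minimality into the estimate $\|I_{\la,\delta}'(u_k,v_k)\|_{Y^*}\leq \frac{C}{k}(1+\|\zeta_k'(0)\|)$, and the reduction of everything to a uniform bound on $\zeta_k'(0)$, whose numerator is handled exactly as in the paper (H\"older, Hardy-Littlewood-Sobolev, Sobolev embedding). The only substantive divergence is how the uniform lower bound on the denominator $\varphi''_{u_k,v_k}(1)=(2-q){C_s^n}\|(u_k,v_k)\|^2-2(22^*_\mu-q)B(u_k,v_k)$ is secured: the paper argues by contradiction, showing that if this quantity were $o_k(1)$ then the Nehari identity and the estimate $B(u_k,v_k)\leq(\tilde S_s^H)^{-2^*_\mu}\|(u_k,v_k)\|^{22^*_\mu}$ would force $\la^{\frac{2}{2-q}}+\delta^{\frac{2}{2-q}}\geq\Theta$, while you propose to exploit the strictness of \eqref{Theta-def6} directly, i.e. the uniform gap $m_{u_k,v_k}(t_{\max}(u_k,v_k))-2B(u_k,v_k)\geq c_1\|(u_k,v_k)\|^{22^*_\mu}\geq c_1 r^{22^*_\mu}$. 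Your quantitative reading of Lemma \ref{Theta-def-lem} is correct and, combined with $r\leq\|(u_k,v_k)\|\leq R$, does yield the bound, so the two arguments buy the same thing (both ultimately hinge on the strict inequality $\la^{\frac{2}{2-q}}+\delta^{\frac{2}{2-q}}<\Theta$); yours avoids re-deriving the threshold, the paper's avoids any discussion of $t_{\max}$.

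One step of your sketch is stated loosely, though the ingredients you assemble do repair it: "distance of $t=1$ from $t_{\max}$ plus second-order information at $t_{\max}$" is not by itself enough, since $m'_{u_k,v_k}(t)\to 0$ as $t\to\infty$, so being far from $t_{\max}$ does not in general force $|m'(1)|$ to be large. What closes the argument is the explicit structure of $m_{u,v}$: writing $a_k={C_s^n}\|(u_k,v_k)\|^2$ and $b_k=\int_{\Om}(\la|u_k|^q+\de|v_k|^q)\mathrm{d}x$, one has $m'_{u_k,v_k}(1)=a_k\bigl[(2-22^*_\mu)+(22^*_\mu-q)b_k/a_k\bigr]$, which vanishes exactly when $t_{\max}(u_k,v_k)=1$; hence if $\varphi''_{u_k,v_k}(1)=m'_{u_k,v_k}(1)\to 0$ along a subsequence, then (using $a_k\geq {C_s^n}r^2$ and the upper bounds on $a_k,b_k$) $t_{\max}(u_k,v_k)\to 1$, and uniform Lipschitz bounds for $m_{u_k,v_k}$ on a fixed compact interval around $1$ give $m_{u_k,v_k}(t_{\max}(u_k,v_k))-2B(u_k,v_k)\to 0$, contradicting your uniform gap. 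With that correction spelled out, your proof is complete.
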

\begin{proof}
 We use the Ekeland Variational principle to say that there exists a minimizing sequence $\{(u_k,v_k)\} \subset \mc N_{\la,\delta}$ such that \begin{equation}\label{PS-seq1}
    I_{\la,\delta}(u_k,v_k) < l_{\la,\delta} + \frac{1}{k} \quad \text{ and } \quad\  I_{\la,\delta}(u_k,v_k) < I_{\la,\delta}(w_1,w_2) + \frac{1}{k} \|(w_1,w_2)- (u_k,v_k)\|,
    \end{equation}
    for each $(w_1,w_2) \in \mc N_{\la,\delta}$. From Lemma \ref{inf-pos-neg}(i) we know that $l_{\la,\delta}<0$, therefore we can find $k$ sufficiently large such that
    \begin{equation}\label{PS-seq2}
    I_{\la,\delta}(u_k,v_k) = \left(\frac12- \frac{1}{22^*_\mu} \right){C_s^n}\|(u,v)\|^2 - \left(\frac{1}{q}- \frac{1}{22^*_\mu} \right)\q < \frac{l_{\la,\delta}}{2}.
    \end{equation}
    This gives us
    \begin{equation}\label{PS-seq3}
    - \frac{2^*_\mu q}{(22^*_\mu -q)}l_{\la,\delta} < \q < S_s^{-\frac{q}{2}} |\Om|^{\frac{2^*_s - q}{2^*_s}} (\la^{\frac{2}{2-q}} + \delta^{\frac{2}{2-q}})^{\frac{2-q}{2}} \|(u_k,v_k)\|^q.
    \end{equation}
    Consequently $(u_k,v_k) \neq 0$. From \eqref{PS-seq3} we get
    \begin{equation}\label{PS-seq4}
    \|(u_k,v_k)\| > \left( - \frac{2^*_\mu q l_{\la,\delta}}{22^*_\mu-q} S_s^{\frac{q}{2}} |\Om|^{-\frac{2^*_s-q}{2^*_s}} \left(\la^{\frac{2}{2-q}} + \delta^{\frac{2}{2-q}}\right)^{\frac{q-2}{2}} \right)^{\frac{1}{q}}
    \end{equation}
    and from \eqref{PS-seq2} we get
     \begin{equation}\label{PS-seq5}
    \|(u_k,v_k)\| < \left(  \frac{(22^*_\mu-q)}{q(2^*_\mu-1)} {S_s^{-\frac{q}{2}}} |\Om|^{\frac{2^*_s-q}{2^*_s}} \left(\la^{\frac{2}{2-q}} + \delta^{\frac{2}{2-q}}\right)^{\frac{2-q}{2}} \right)^{\frac{1}{2-q}}.
    \end{equation}
    \textbf{Claim:} $I_{\la,\delta}^\prime(u_k,v_k) \to 0$ in $Y^*$ as $k \to \infty$.\\
    Let us fix $k \in \mb N$ then by applying {Proposition} \ref{IFT-N} to $w_k = (u_k,v_k)$, we get that there exists a function $\zeta_k : B_{\e_k}(0) \to \mb R^+$ for some $\e_k >0$ such that $\zeta_k(h)(w_k-h) \in \mc N_{\la,\delta}$ for $h=(h_1,h_2) \in {B_{\e_k}}(0)$. Let us take $\tau \in (0, \e_k)$ and $z \in Y$ with $z\not\equiv 0$ in $Y$. We set
    \[\tilde z =\displaystyle\frac{{\tau} z}{\|z\|}  \; \text{and}\; h_{\tau} = \zeta_k(\tilde z)(w_k- \tilde z).\]
   Then  Lemma \ref{IFT-N} implies that $\tilde z \in \mc N_{\la,\delta}$ and using  \eqref{PS-seq1} with $(w_1,w_2)= h_\tau$ we get
    \[I_{\la,\delta}(h_{\tau}) - I_{\la,\delta}(w_k) \geq -\frac{1}{k} \|(h_{\tau}- w_k)\|. \]
    Now applying the Mean Value theorem we obtain
    \[ (I_{\la,\delta}^\prime(w_k), h_\tau- w_k) + o(\|h_\tau-w_k\|) \geq -\frac{1}{k}\|h_\tau-w_k \|.\]
    Substituting the value of $h_\tau$ in this, we get
    \[(I_{\la,\delta}^\prime(w_k),-\tilde z )+ (\zeta_k(\tilde z)-1 )  (I_{\la,\delta}^\prime(w_k),w_k - \tilde z) \geq -\frac{1}{k}\|h_\tau-w_k \| + o(\|h_\tau-w_k\|). \]
    Then using the fact that $\zeta_k^\prime(\tilde h)(w_k- \tilde h) \in \mc N_{\la,\delta}$, we get
    \begin{equation}\label{PS-seq6}
     -\tau \left( I_{\la,\delta}^\prime(w_k), \frac{z}{\|z\|} \right) + (\zeta_k(\tilde z)-1 ) (I_{\la,\delta}^\prime(w_k)- I_{\la,\delta}^\prime(h_\tau), w_k- \tilde h) \geq -\frac{1}{k} \|h_\tau - w_k\|  + o(\|h_\tau- w_k\|).
     \end{equation}
    Since $\|h_\tau - w_k\|\leq \tau |\zeta_k(\tilde h)|+ |\zeta_k(\tilde h)-1| \|w_k\|$ and
    \[\lim_{\tau \to 0} \frac{|\zeta_k(\tilde h)- 1|}{\tau} \leq \|\zeta_k^\prime(0)\|.\]
    On passing the limit $\tau \to 0$ in \eqref{PS-seq6}, for some constant $M>0$ we get
    \[\left( I_{\la,\delta}^\prime(w_k), \frac{z}{\|z\|} \right) \leq \frac{M}{k} (1+ \|\zeta_k^\prime (0)\|). \]
    This will prove our claim once we are able to show that $\sup\limits_{k} \|\zeta^\prime_k(0)\| < +\infty$. Let $w=(w_1,w_2) \in Y$ then using H\"{o}lder's inequality we get
    \begin{equation}\label{PS-seq7}
    \begin{split}
    \int_{\Om} (\la|u_k|^{q-1}w_1+ \delta |v_k|^{q-1}w_2)\mathrm{d}x \leq (\la+\delta)C_q^q\|(u_k,v_k)\|^{q-1} \|(w_1,w_2)\|,
    \end{split}
    \end{equation}
    where $C_q = \sup\{\int_{\Om}u^q:\; \|u_k\|=1\}$. Again using H\"{o}lder inequality, Hardy-Littlewood-Sobolev inequality and fractional Sobolev embeddings, we can estimate the following
    \begin{equation}\label{PS-seq8}
    \begin{split}
    &\int_\Om (|x|^{\mu}\ast |u_k|^{2^*_\mu}) |v_k|^{2^*_\mu-1}w_1~\mathrm{d}x\\
     & \leq {C(n,\mu)}\left( \int_\Om \left( |v_k|^{2^*_\mu-1}w_1 \right)^{\frac{2n}{2n-\mu} } \right)^{\frac{2n-\mu}{2n}} \left( \int_\Om |u_k|^{2^*_\mu\cdot \frac{2n}{2n-\mu} }\right)^{\frac{2n-\mu}{2n}}\\
    & \leq  {C(n,\mu)}\left[\left( \int_\Om |v_k|^{{2^*_s}}\right)^{\frac{n-\mu+2s}{{2n-\mu}}} \left(\int_\Om |w_1|^{2^*_s} \right)^{\frac{1}{2^*_\mu} } \right]^{\frac{2n-\mu}{2n}} \left( \int_\Om |u_k|^{2^*_s }\right)^{\frac{2n-\mu}{2n}}\\
    & \leq  M_1 \|(u_k,v_k)\|^\alpha \|(w_1,w_2)\|,
    \end{split}
    \end{equation}
where $\alpha = \displaystyle {2^*_s\left(\frac{3n-2\mu+2s}{2n}\right)}$ and $  M_1 >0$ is a constant. Similarly we can show that there exist $ M_2>0$ such that
    \begin{equation}\label{PS-seq9}
    \begin{split}
    \int_\Om (|x|^{\mu}\ast |v_k|^{2^*_\mu}) |u_k|^{2^*_\mu-1}w_2~\mathrm{d}x & \leq M_2 \|(u_k,v_k)\|^{{\alpha}}\|(w_1,w_2)\|.
    \end{split}
    \end{equation}
    Consequently using \eqref{PS-seq7}, \eqref{PS-seq8} and \eqref{PS-seq9} in \eqref{IFT-N1} we get
    \[ |(\zeta^\prime_k(0), w)| \leq \frac{M_3 \|(w_1,w_2)\|}{|(2-q) {C_s^n}\|(u_k,v_k)\|^2-2(22^*_\mu-q){B(u_k,v_k)}  |}\]
    where $M_3>0$  is a constant independent of $(u_k,v_k)$, thanks to {\eqref{PS-seq4}}.\\
    \textbf{Claim:} There exists a $M_4>0$ such that
    \[\left|(2-q){C_s^n}\|(u_k,v_k)\|^2-2(22^*_\mu-q){B(u_k,v_k)}  \right| \geq M_4.\]
    On contrary, let us assume that there exist a subsequence still denoted by $\{(u_k,v_k)\} \subset \mc N_{\la,\delta}$ such that
    \begin{equation}\label{PS-seq10}
     \left|(2-q){C_s^n}\|(u_k,v_k)\|^2-2(22^*_\mu-q){B(u_k,v_k)} \right| = o_k(1).
     \end{equation}
     Since $(u_k,v_k) \in \mc N_{\la,\delta}$, we have
      \begin{equation*}\label{PS-seq11}
      \begin{split}
      {C_s^n}\|(u_k,v_k)\|^2 &= \left( \frac{22^*_\mu-q}{22^*_\mu-2} \right)\Q + o_k(1)\\
      & \leq \left( \frac{22^*_\mu-q}{22^*_\mu-2} \right)  S_s^{-\frac{q}{2}} |\Om|^{\frac{2^*_s -q}{2^*_s}} (\la^{\frac{2}{2-q}} + \delta^{\frac{2}{2-q}})^{\frac{2-q}{2} } \|(u_k,v_k)\|^q +o_k(1)
      \end{split}
      \end{equation*}
      {which implies that
     \begin{equation}\label{PS-seq11}
      \begin{split}
      {C_s^n}\|(u_k,v_k)\|^{2-q}  \leq \left( \frac{22^*_\mu-q}{22^*_\mu-2} \right) S_s^{-\frac{q}{2}} |\Om|^{\frac{2^*_s -q}{2^*_s}} (\la^{\frac{2}{2-q}} + \delta^{\frac{2}{2-q}})^{\frac{2-q}{2} } +o_k(1).
      \end{split}
      \end{equation}
     Also \eqref{PS-seq10} gives us
      \begin{equation*}\label{PS-seq12}
      \begin{split}
       {C_s^n}\|(u_k,v_k)\|^2  = \left( \frac{2(22^*_\mu-q)}{2-q} \right)B(u_k,v_k) + o_k(1)\leq \left( \frac{2(22^*_\mu-q)}{2-q} \right) (\tilde S_s^H)^{- 2^*_\mu}  \|(u_k,v_k)\|^{{2}2^*_\mu} + o_k(1)
       \end{split}
      \end{equation*}
      which implies that
      \begin{equation}\label{PS-seq12}
      \begin{split}
       \|(u_k,v_k)\| \geq \left( \frac{{C_s^n}(2-q)(\tilde S_s^H)^{ 2^*_\mu} }{2(22^*_\mu-q)} \right)^{\frac{1}{22^*_\mu-2}}  + o_k(1)
       \end{split}
      \end{equation}
      where we used the fact that $\|(u_k,v_k)\|\neq o_k(1)$ because of \eqref{PS-seq4}.
From \eqref{PS-seq11} and \eqref{PS-seq12}, for large $k$ we obtain
\begin{align*}
{C_s^n}\left( \frac{{C_s^n}(2-q)(\tilde S_s^H)^{ 2^*_\mu} }{2(22^*_\mu-q)} \right)^{\frac{2-q}{22^*_\mu-2}} \leq \left( \frac{22^*_\mu-q}{22^*_\mu-2} \right) S_s^{-\frac{q}{2}} |\Om|^{\frac{2^*_s -q}{2^*_s}} (\la^{\frac{2}{2-q}} + \delta^{\frac{2}{2-q}})^{\frac{2-q}{2} }
\end{align*}
Then using Lemma \ref{reltn} and \eqref{relation}, the above inequality yields
\[ (\la^{\frac{2}{2-q}} + \delta^{\frac{2}{2-q}}) \geq \left[ \frac{2^{2^*_\mu-1}{(C_s^n)^{\frac{22^*_\mu-q}{2-q}}} }{C(n,\mu)}\left(\frac{2-q}{22^*_\mu-q}\right) \left( \frac{22^*_\mu-2}{22^*_\mu-q}\right)^{\frac{22^*_\mu-2}{2-q}} {S_s}^{\frac{q(2^*_\mu-1)}{2-q}+2^*_\mu}|\Om|^{-\frac{(2^*_s-q)(22^*_\mu-2)}{2^*_s(2-q)}}\right]^{\frac{1}{2^*_\mu-1}} \]}
This contradicts the assumption that $0<  \la^{\frac{2}{2-q}} + \delta^{\frac{2}{2-q}}< \Theta$. Hence the claim holds true and we finally obtain
\[ \left( I_{\la,\delta}^\prime(w_k), \frac{z}{\|z\|} \right) \leq \frac{M}{k}.\]
This establishes our first claim and completes the proof.\hfill{\QED}
\end{proof}

\subsection{First solution}
We now prove the existence of first solution for the problem $(P_{\la,\delta})$.

\begin{Theorem}\label{first-sol}
Let $0<  \la^{\frac{2}{2-q}} + \delta^{\frac{2}{2-q}}< \Theta$. Then there exists a $(u_1,v_1) \in \mc N_{\la,\delta}^+$ such that $(u_1,v_1)$ is a weak solution of $(P_{\la,\delta})$. Moreover, $(u_1,v_1)$ satisfies
 $I_{\la,\delta}(u_1,v_1) = l_{\la,\delta} = l_{\la,\delta}^+ <0$.
\end{Theorem}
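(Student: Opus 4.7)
The plan is to extract the first solution from the Palais--Smale sequence produced by Theorem \ref{PS-seq}. Concretely, let $\{(u_k,v_k)\}\subset \mc N_{\la,\delta}$ be a $(PS)_{l_{\la,\delta}}$ sequence. By Lemma \ref{PS-seq-bdd} it is bounded in $Y$, so up to a subsequence $(u_k,v_k)\rightharpoonup(u_1,v_1)$ weakly in $Y$, and by Lemma \ref{unif-lwr-bd} the weak limit satisfies $I_{\la,\delta}^\prime(u_1,v_1)=0$, i.e.\ it is a weak solution of $(P_{\la,\delta})$. The remaining issues are (a) ruling out $(u_1,v_1)=(0,0)$, (b) proving $I_{\la,\delta}(u_1,v_1)=l_{\la,\delta}$, and (c) locating the minimizer in $\mc N_{\la,\delta}^+$.

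For (a), I would use the identity, valid for any $(u,v)\in\mc N_{\la,\delta}$ (and asymptotically for the PS sequence after testing $I^\prime(u_k,v_k)$ against $(u_k,v_k)$),
\[
I_{\la,\delta}(u,v)=\left(\tfrac12-\tfrac{1}{22^*_\mu}\right){C_s^n}\|(u,v)\|^2-\left(\tfrac1q-\tfrac{1}{22^*_\mu}\right)\int_\Om(\la|u|^q+\delta|v|^q)\mathrm{d}x.
\]
Since $X_0\hookrightarrow L^q(\Om)$ compactly, $\int_\Om(\la|u_k|^q+\delta|v_k|^q)\to\int_\Om(\la|u_1|^q+\delta|v_1|^q)$; weak lower semicontinuity of the norm then yields $l_{\la,\delta}\ge I_{\la,\delta}(u_1,v_1)$. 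If $(u_1,v_1)=(0,0)$ the right-hand side becomes $\ge 0$, contradicting $l_{\la,\delta}\le l_{\la,\delta}^+<0$ from Lemma \ref{inf-pos-neg}(i). Thus $(u_1,v_1)\ne(0,0)$, and being a critical point it lies in $\mc N_{\la,\delta}$, giving $I_{\la,\delta}(u_1,v_1)\ge l_{\la,\delta}$, hence equality in (b). The equality of norms in the limit then forces $\|(u_k,v_k)\|\to\|(u_1,v_1)\|$, upgrading weak to strong convergence in the Hilbert space $Y$.

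For (c), since $\mc N_{\la,\delta}^0=\emptyset$ by Lemma \ref{N_0-empty}, one has $(u_1,v_1)\in\mc N_{\la,\delta}^+\cup\mc N_{\la,\delta}^-$. Suppose by contradiction that $(u_1,v_1)\in\mc N_{\la,\delta}^-$. Apply Lemma \ref{Theta-def-lem} to $(u_1,v_1)$: there exists $t_1<1=t_2$ with $(t_1 u_1,t_1 v_1)\in\mc N_{\la,\delta}^+$ and
\[
I_{\la,\delta}(t_1u_1,t_1v_1)=\inf_{t\in[0,t_{\max}(u_1,v_1)]}I_{\la,\delta}(tu_1,tv_1).
\]
Because the fibering map $\varphi_{u_1,v_1}$ has its local minimum at $t_1$ strictly to the left of the local maximum at $t_2=1$, we get $I_{\la,\delta}(t_1u_1,t_1v_1)<I_{\la,\delta}(u_1,v_1)=l_{\la,\delta}$, contradicting the definition of $l_{\la,\delta}$ as the infimum over all of $\mc N_{\la,\delta}$. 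Hence $(u_1,v_1)\in\mc N_{\la,\delta}^+$, and in particular $l_{\la,\delta}=l_{\la,\delta}^+$.

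The step I expect to be the most delicate is (a)--(b): checking nontriviality of the weak limit and converting it into strong convergence. The critical Hardy--Littlewood--Sobolev term $B(u,v)$ is not weakly continuous in general, but by rewriting $I_{\la,\delta}$ on $\mc N_{\la,\delta}$ in the form above we eliminate the bad nonlinearity from the energy and only the $L^q$ integral appears, for which the compact embedding of $X_0$ into $L^q(\Om)$ (using $1<q<2<2^*_s$) does the work. The sign information $l_{\la,\delta}<0$ from Lemma \ref{inf-pos-neg}(i) is what then forces $(u_1,v_1)\neq(0,0)$.
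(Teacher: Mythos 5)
Your proposal is correct and follows essentially the same route as the paper: extract the $(PS)_{l_{\la,\delta}}$ sequence from Theorem \ref{PS-seq}, use Lemma \ref{PS-seq-bdd} and Lemma \ref{unif-lwr-bd} to get a weak-limit critical point, exploit the reduced energy identity on $\mc N_{\la,\delta}$ together with $l_{\la,\delta}<0$ and compact convergence of the $L^q$ terms to obtain nontriviality, $I_{\la,\delta}(u_1,v_1)=l_{\la,\delta}$ and strong convergence, and finally rule out $\mc N_{\la,\delta}^-$ via the fibering map of Lemma \ref{Theta-def-lem}. Your minor variations (weak lower semicontinuity of the norm instead of the paper's limit of the $L^q$ lower bound for nontriviality, and direct strict monotonicity of $\varphi_{u_1,v_1}$ on $(t_1,1)$ instead of the paper's intermediate point $\hat t$) are equivalent in substance.
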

\begin{proof}
By Theorem \ref{PS-seq} we know that there exists a $(PS)_{l_{\la,\delta}}$  sequence $\{(u_k,v_k)\}\subset \mc N_{\la,\delta}$ for $I_{\la,\delta}$ that is
 \[\lim_{k \to \infty} I_{\la,\delta}(u_k,v_k) = l_{\la,\delta} \leq l_{\la,\delta}^+ <0\; \text{and}\; \lim_{k \to \infty} I_{\la,\delta}^\prime (u_k,v_k) = 0 \; \text{in}\; Y^*. \]
 By Lemma \ref{PS-seq-bdd} we know that this sequence  $\{(u_k,v_k)\}$ is bounded in $Y$. Therefore there exists $(u_1,v_1) \in Y$ such that upto a subsequence, $(u_k, v_k) \rightharpoonup (u_1,v_1) $ weakly in $Y$ and $(u_k ,v_k) \to (u_1,v_1)$ strongly in $L^m(\Om)$, for $m \in [1,2^*_s)$ as $k \to \infty$. Therefore
$\lim\limits_{k \to \infty}\Q =\q $.
 We already know that $(u_1,v_1)$ is a weak solution of $(P_{\la,\delta})$, by Lemma \ref{unif-lwr-bd}. {Since $\{(u_k,v_k)\} \subset \mc N_{\la,\delta}$ we obtain
 \begin{align*}
 I_{\la,\delta}(u_k,v_k) & = \left(\frac{1}{2}- \frac{1}{22^*_\mu} \right){C_s^n} \|(u_k,v_k)\|^2 - \left( \frac{1}{q}- \frac{1}{22^*_\mu}\right)\Q\\
 & \geq - \left( \frac{1}{q}- \frac{1}{22^*_\mu}\right)\Q.
 \end{align*}
 From Lemma \ref{inf-pos-neg} we know that $l_{\la,\delta}<0$, so passing on the limit $k \to \infty$ we get
 \[\int_\Om(\la|u_1|^q+\delta|v_1|)\mathrm{d}x  \geq - \frac{22^*_\mu}{(22^*_\mu-q)}l_{\la,\delta} >0.\]
 This implies  that $(u_1,v_1) \in \mc N_{\la,\delta}$ is non-trivial solution of $(P_{\la,\delta})$.\\ }
 \textbf{Claim:} $(u_k,v_k) \to (u_1,v_1)$ strongly in $Y$ as $k \to \infty$ and $I_{\la,\delta}(u_1,v_1)= l_{\la,\delta}^+$.\\
Using $(u_1,v_1) \in \mc N_{\la,\delta}$  and Fatou's Lemma we have
\begin{align*}
l_{\la,\delta} \leq I_{\la,\delta}(u_1,v_1) &= \left( \frac{2^*_\mu -1}{22^*_\mu}\right){C_s^n} \|(u_1,v_1)\|^2 - \left( \frac{22^*_\mu-q}{22^*_\mu q}\right) \int_{\Om} (\la |u_1|^q + \delta |v_1|^q)~\mathrm{d}x\\
& \leq \liminf_{k \to \infty} \left( \left( \frac{2^*_\mu -1}{22^*_\mu}\right){C_s^n} \|(u_k,v_k)\|^2 - \left( \frac{22^*_\mu-q}{22^*_\mu q}\right)\Q \right)\\
& = \liminf_{k \to \infty} I_{\la,\delta}(u_k,v_k) = l_{\la,\delta}.
\end{align*}
This implies that $I_{\la,\delta}(u_1,v_1) = l_{\la,\delta}$ and $\|(u_k,v_k)\| \to \|(u_1,v_1)\|$ as $k \to \infty$. We have
\[\|(u_k-u_1, v_k-v_1)\|^2 = \|(u_k,v_k)\|^2 - \|(u_1,v_1)\|^2 +o_k(1). \]
Therefore $(u_k,v_k)\to (u_1,v_1)$ strongly in $Y$ as $k \to \infty$. To establish our claim, it remains to show that  $(u_1,v_1)\in \mc N_{\la,\delta}^+$. On  the contrary, if $(u_1,v_1) \in \mc N_{\la,\delta}^-$ then by Lemma \ref{Theta-def-lem}, there exist unique $t_2 > t_1>0$ such that
\[(t_1u,t_1v) \in \mc N_{\la,\delta}^+ \; \text{and}\; (t_2u_1, t_2v_1) \in \mc N_{\la,\delta}^-.\]
Particularly, $t_1 < t_2 =1$. Since
$\varphi_{u,v}^\prime(t_1)=0 \; \text{and}\; \varphi^{\prime \prime}(t_1)>0$,
so $t_1$ is local minimum of $\varphi_{u,v}$. Therefore there exists a $\hat t \in (t_1,1]$ such that $I_{\la,\delta}(t_1u_1,t_1v_1) < I_{\la,\delta}(\hat tu_1,\hat t v_1) $. Hence
\[l_{\la,\delta} \leq I_{\la,\delta}(t_1u_1,t_1v_1) < I_{\la,\delta}(\hat t u_1,\hat tv_1) \leq I_{\la,\delta}(u_1,v_1) = l_{\la,\delta}\]
which contradicts that $(u_1,v_1) \in \mc N_{\la,\delta}^-$.\hfill{\hfill{\QED}}
\end{proof}

\begin{Lemma}\label{first-sol-pos}
There exists a non negative local minimum of $I_{\la,\delta}$.
\end{Lemma}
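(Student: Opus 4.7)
The plan is to exploit the symmetries of $I_{\la,\delta}$ under the map $(u,v) \mapsto (|u|,|v|)$, together with the fibering analysis of Lemma~\ref{Theta-def-lem}, to upgrade the solution $(u_1,v_1) \in \mc N_{\la,\delta}^+$ obtained in Theorem~\ref{first-sol} to a non-negative minimizer on $\mc N_{\la,\delta}^+$ (which is automatically a local minimum of $I_{\la,\delta}$ because $\mc N_{\la,\delta}^0 = \emptyset$ decouples $\mc N_{\la,\delta}^+$ from $\mc N_{\la,\delta}^-$).

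First I would observe that the Gagliardo seminorm satisfies the pointwise inequality $\bigl||u(x)|-|u(y)|\bigr| \le |u(x)-u(y)|$, so for any $(u,v) \in Y$ one has $\|(|u|,|v|)\| \le \|(u,v)\|$, while $\int_\Om(\la|u|^q+\delta|v|^q)\mathrm{d}x$ and $B(u,v)$ are invariant under this substitution. Consequently $\varphi_{|u|,|v|}(t) \le \varphi_{u,v}(t)$ for every $t \ge 0$, i.e.\ $I_{\la,\delta}(t|u|,t|v|) \le I_{\la,\delta}(tu,tv)$ for all $t > 0$.

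Next, applying this to the solution $(u_1,v_1)$ provided by Theorem~\ref{first-sol} and using Lemma~\ref{Theta-def-lem} (which applies because $(|u_1|,|v_1|) \neq (0,0)$ and $0 < \la^{2/(2-q)}+\delta^{2/(2-q)} < \Theta$), I obtain a unique $\tilde t_1 \in (0,t_{\mathrm{max}}(|u_1|,|v_1|))$ such that $(\tilde t_1|u_1|,\tilde t_1|v_1|) \in \mc N_{\la,\delta}^+$ and
\[
I_{\la,\delta}(\tilde t_1|u_1|,\tilde t_1|v_1|) = \inf_{t\in[0,t_{\mathrm{max}}(|u_1|,|v_1|)]} I_{\la,\delta}(t|u_1|,t|v_1|).
\]
Combining the fibering inequality above with the identity $I_{\la,\delta}(u_1,v_1) = \sup_{t\in[0,t_{\mathrm{max}}(u_1,v_1)]} \ldots$ is not quite what I need, but since $(u_1,v_1)\in \mc N_{\la,\delta}^+$ means $1 \in [0,t_{\mathrm{max}}(u_1,v_1)]$ is the minimizing abscissa of $\varphi_{u_1,v_1}$ on that interval, I get
\[
l_{\la,\delta}^+ \le I_{\la,\delta}(\tilde t_1|u_1|,\tilde t_1|v_1|) \le I_{\la,\delta}(\tilde t_1 u_1,\tilde t_1 v_1) \le I_{\la,\delta}(u_1,v_1) = l_{\la,\delta}^+,
\]
so all these inequalities are equalities and $(\tilde u_1,\tilde v_1) := (\tilde t_1|u_1|,\tilde t_1|v_1|) \in \mc N_{\la,\delta}^+$ attains $l_{\la,\delta}^+$. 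By Lemma~\ref{min-is-sol} this non-negative pair is a weak solution of $(P_{\la,\delta})$.

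Finally, to promote this to a \emph{local minimum} of $I_{\la,\delta}$ (and not just a minimum on the constraint), I would invoke Lemma~\ref{N_0-empty} together with the fibering picture: because $\mc N_{\la,\delta}^0 = \emptyset$, the sets $\mc N_{\la,\delta}^+$ and $\mc N_{\la,\delta}^-$ are relatively open in $\mc N_{\la,\delta}$, and by Proposition~\ref{IFT-N} every $(w_1,w_2)\in Y$ close to $(\tilde u_1,\tilde v_1)$ admits a unique projection $\zeta(w)(\tilde u_1 - w_1,\tilde v_1-w_2)\in \mc N_{\la,\delta}^+$ with $\zeta(0)=1$; since $\varphi_{\tilde u_1,\tilde v_1}$ has a strict local minimum at $t=1$ on the positive real line (as $\varphi''_{\tilde u_1,\tilde v_1}(1) > 0$), a standard comparison shows $I_{\la,\delta}(\tilde u_1,\tilde v_1) \le I_{\la,\delta}(w_1,w_2)$ for all $(w_1,w_2)$ in a $Y$-neighborhood of $(\tilde u_1,\tilde v_1)$. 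The main delicate point will be this last step, where one must use the uniqueness and regularity of the fibering projection carefully to conclude a genuine local minimum in $Y$ rather than merely in $\mc N_{\la,\delta}^+$.
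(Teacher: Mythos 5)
Your overall strategy is the same as the paper's: replace $(u_1,v_1)$ by $(|u_1|,|v_1|)$, use that the $L^q$-term and $B(\cdot,\cdot)$ are invariant while $\|(|u_1|,|v_1|)\|\le\|(u_1,v_1)\|$, project onto $\mc N_{\la,\delta}^+$ via Lemma \ref{Theta-def-lem}, and conclude that the projected non-negative pair attains $\inf_{\mc N_{\la,\delta}^+}I_{\la,\delta}$ (the paper handles the upgrade to a local minimum in $Y$ by citing Tarantello, which is essentially what your last paragraph reconstructs). However, the middle of your displayed chain is not justified: the inequality $I_{\la,\delta}(\tilde t_1 u_1,\tilde t_1 v_1)\le I_{\la,\delta}(u_1,v_1)$ does not follow from the fact that $t=1$ is the minimizing abscissa of $\varphi_{u_1,v_1}$ on $[0,t_{\max}(u_1,v_1)]$ --- that fact yields exactly the \emph{reverse} inequality $I_{\la,\delta}(\tilde t_1 u_1,\tilde t_1 v_1)\ge I_{\la,\delta}(u_1,v_1)$ whenever $\tilde t_1\le t_{\max}(u_1,v_1)$, and gives no comparison at all if $\tilde t_1$ exceeds $t_{\max}(u_1,v_1)$. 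As written, the chain $l_{\la,\delta}^+\le\cdots\le l_{\la,\delta}^+$ therefore does not close.

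The repair is exactly the paper's route: you never need the fiber of $(u_1,v_1)$ at $\tilde t_1$. Since the numerator in $t_{\max}$ is unchanged and the denominator does not increase under taking absolute values, $t_{\max}(|u_1|,|v_1|)\ge t_{\max}(u_1,v_1)>1$ (the last inequality because $(u_1,v_1)\in\mc N_{\la,\delta}^+$ forces its own projection parameter in Lemma \ref{Theta-def-lem} to equal $1$, which lies strictly below $t_{\max}(u_1,v_1)$). Hence $t=1$ belongs to $[0,t_{\max}(|u_1|,|v_1|)]$, and the characterization $I_{\la,\delta}(\tilde t_1|u_1|,\tilde t_1|v_1|)=\inf_{t\in[0,t_{\max}(|u_1|,|v_1|)]}I_{\la,\delta}(t|u_1|,t|v_1|)$ gives directly $I_{\la,\delta}(\tilde t_1|u_1|,\tilde t_1|v_1|)\le I_{\la,\delta}(|u_1|,|v_1|)\le I_{\la,\delta}(u_1,v_1)=l_{\la,\delta}^+$. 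The paper phrases the same point through the maps $m_{u,v}$: from $m_{|u_1|,|v_1|}(1)\le m_{u_1,v_1}(1)=2B(u_1,v_1)=2B(|u_1|,|v_1|)=m_{|u_1|,|v_1|}(\tilde t_1)$ together with $0<m_{u_1,v_1}^\prime(1)\le m_{|u_1|,|v_1|}^\prime(1)$ it deduces $\tilde t_1\ge 1$, and then compares at $t=1$. With this correction your argument goes through and coincides with the paper's proof of Lemma \ref{first-sol-pos}.
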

\begin{proof}
Suppose $(u_1,v_1)$ be as obtained in Theorem \ref{first-sol}. Then it is also a local minimum for $I_{\la,\delta}$, the proof follows as [pp. $291$,\cite{tarantello}]. If $u_1,v_1 \geq 0$ then we are done. Else consider $(|u_1|,|v_1|)$ then by Lemma \ref{Theta-def-lem} we know that there exist a $t_1$ such that $(t_1|u_1|,t_1|v_1|) \in \mc N_{\la,\delta}^+$. Since $m_{|u_1|,|v_1|}(1) \leq m_{u_1,v_1}(1)= 2B(u_1,v_1)= 2B(|u_1|,|v_1|)=m_{|u_1|,|v_1|}(t_1) $  and $0< m_{u_1,v_1}^\prime(1) \leq m_{|u_1|,|v_1|}^\prime(1)$. This implies $t_1\geq 1$ and thus we have
\[I_{\la,\delta}(t_1|u_1|,t_1|v_1|) \leq I_{\la,\delta}(|u_1|,|v_1|)\leq I_{\la,\delta}(u_1,v_1)= \inf I_{\la,\delta}(\mc N_{\la,\delta}^+).  \]
Hence we obtain a non negative local minimum of $I_{\la,\delta}$ over $\mc N_{\la,\delta}^+$. \hfill{\QED}
\end{proof}\\

We prove positivity of the solution $(u_1,v_1)$ of $(P_{\la,\delta})$.

\begin{Proposition}\label{positive-sol}
The non negative weak solution $(u_1,v_1)$ of $(P_{\la,\delta})$ obtained in Lemma \ref{first-sol-pos} is positive in $\Om$ that is $u_1,v_1>0$ in $\Om$. Moreover for each compact subset $K$ of $\Om$, there exists a $m_K>0$ such that $u_1,v_1 \geq m_K$ in $K$.
\end{Proposition}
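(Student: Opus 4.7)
The plan is to derive positivity of $u_1$ and $v_1$ in three stages: an $L^\infty$ bootstrap adapted to the doubly nonlocal setting, regularity theory for the fractional Dirichlet problem, and the strong maximum principle, combined with a minimality argument excluding semi-trivial configurations.

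First, I would show $u_1, v_1 \in L^\infty(\Om)$ by a Moser-type iteration. Testing the first equation of $(P_{\la,\delta})$ against $\phi = u_1 \min(u_1, T)^{2(\beta-1)}$ for a truncation $T>0$ and an exponent $\beta \geq 1$, using the fractional Sobolev embedding $X_0 \hookrightarrow L^{2^*_s}(\Om)$ on the left and the Hardy--Littlewood--Sobolev inequality (Proposition \ref{HLS}) on the nonlocal term,
$$\int_\Om \bigl(|x|^{-\mu}\ast |v_1|^{2^*_\mu}\bigr)|u_1|^{2^*_\mu-1}\phi\,\mathrm{d}x \leq C(n,\mu)\,\bigl\||v_1|^{2^*_\mu}\bigr\|_{L^{2n/(2n-\mu)}}\bigl\||u_1|^{2^*_\mu-1}\phi\bigr\|_{L^{2n/(2n-\mu)}},$$
I would obtain a recursive inequality of the form $\|u_1\|_{L^{2^*_s\beta}} \leq (C\beta)^{1/\beta}\|u_1\|_{L^{2\beta}}$ along a geometric sequence $\beta_k \to \infty$. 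Iteration yields a uniform $L^\infty$-bound on $u_1$; the symmetric argument gives $v_1 \in L^\infty(\Om)$.

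Second, once $u_1, v_1 \in L^\infty(\Om)$, the Riesz convolutions $|x|^{-\mu}\ast |u_1|^{2^*_\mu}$ and $|x|^{-\mu}\ast |v_1|^{2^*_\mu}$ are bounded on $\ov\Om$ (since $\mu<n$ and $u_1, v_1$ are supported in the bounded set $\Om$), so both right-hand sides of $(P_{\la,\delta})$ belong to $L^\infty(\Om)$. Classical regularity results for the Dirichlet problem of $(-\De)^s$ with bounded data then give $u_1, v_1 \in C^s(\ov\Om) \hookrightarrow C(\ov\Om)$. Since $u_1 \geq 0$ satisfies $(-\De)^s u_1 \geq 0$ weakly in $\Om$ and vanishes on $\mb R^n\setminus \Om$, the strong maximum principle for $(-\De)^s$ yields the dichotomy $u_1 \equiv 0$ or $u_1 > 0$ in $\Om$; analogously for $v_1$.

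The main difficulty, which I expect to be the crux of the argument, is ruling out the semi-trivial alternative. Suppose for contradiction $u_1 \equiv 0$; then $v_1 \not\equiv 0$, hence $v_1 > 0$ in $\Om$ and solves $(-\De)^s v = \delta v^{q-1}$ with $\|v_1\|^2 = \delta \int_\Om v_1^q$. Picking $\phi \in X_0$ with $\phi > 0$ in a ball $B \subset \Om$ (for example, a Dirichlet eigenfunction of $(-\De)^s$), I would form the perturbation $(s\phi, v_1)$ and use Lemma \ref{Theta-def-lem} to project onto $\mc N_{\la,\delta}^+$, producing a curve $s \mapsto (t(s)s\phi, t(s)v_1) \in \mc N_{\la,\delta}^+$ with $t(0)=1$. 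A Taylor expansion around $s=0$, exploiting that the sublinear term $\la s^q \int \phi^q$ dominates the critical Choquard coupling $O(s^{2^*_\mu})$ for small $s$, yields $I_{\la,\delta}(t(s)s\phi, t(s)v_1) < I_{\la,\delta}(0, v_1) = l_{\la,\delta}^+$ for $s$ small enough, contradicting the minimality of $(u_1, v_1)$; the case $v_1 \equiv 0$ is excluded symmetrically. Finally, since $u_1, v_1 \in C(\ov\Om)$ are strictly positive on the open set $\Om$ and any compact $K \subset \Om$ has positive distance to $\pa\Om$, continuity gives the uniform lower bound $m_K := \min_K \min(u_1, v_1) > 0$.
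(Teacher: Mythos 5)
Your proposal is correct in outline but follows a genuinely different route from the paper. The paper never proves an $L^\infty$-bound or continuity of $(u_1,v_1)$: it introduces the auxiliary problem $(-\De)^s w = \la u_1^{q-1}$ in $\Om$, $w=0$ in $\mb R^n\setminus\Om$, bootstraps the regularity of $w$ (not of $u_1$) through the Riesz-potential estimates of Ros-Oton--Serra --- in one step when $\frac{2^*_s}{q-1}>\frac{n}{2s}$, and via the iterative scheme $(-\De)^s w_k=\la w_{k-1}^{q-1}$ otherwise --- and then uses the comparison principle (since $u_1$ is a weak supersolution of the auxiliary problem) plus the strong maximum principle applied to $w$ to get $u_1\geq w>0$ and $u_1\geq w\geq m_K$ on compacts; this buys the conclusion without any a priori boundedness or H\"older regularity of the solution itself. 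Your route (Moser iteration to get $u_1,v_1\in L^\infty(\Om)$, then $C^s(\ov\Om)$ regularity, then the strong maximum principle applied directly to $u_1,v_1$) is viable and yields more regularity, and your explicit exclusion of the semi-trivial alternative is a genuine addition: the paper's comparison argument tacitly assumes $u_1\not\equiv 0$ and $v_1\not\equiv 0$, a point it only addresses through Remark \ref{c_0<c_1}'s companion, Remark 1.2, by citation. Two caveats you should repair when writing this up: (i) at the critical Hardy--Littlewood--Sobolev exponent the recursion $\|u_1\|_{L^{2^*_s\beta}}\leq (C\beta)^{1/\beta}\|u_1\|_{L^{2\beta}}$ does not follow from a direct HLS estimate, because the critical term produced on the right is of the same order as the left-hand side; one needs a Brezis--Kato type splitting of $(|x|^{-\mu}\ast|v_1|^{2^*_\mu})|u_1|^{2^*_\mu-2}$ into a bounded piece and a piece of small norm so that the critical contribution can be absorbed; (ii) in the semi-triviality argument your justification that ``$\la s^q\int\phi^q$ dominates $O(s^{2^*_\mu})$'' is both unnecessary and, when $\mu>4s$, false in general (there $2^*_\mu<2$ and may even lie below $q$); the correct observation is that the coupling enters with a favorable sign, so $I_{\la,\delta}(s\phi,v_1)\leq I_{\la,\delta}(0,v_1)+\frac{C_s^n s^2}{2}\|\phi\|^2-\frac{\la s^q}{q}\int_\Om\phi^q<I_{\la,\delta}(0,v_1)$ for $s$ small, and after projecting onto $\mc N_{\la,\delta}^+$ via Lemma \ref{Theta-def-lem} (noting $t_{\max}(s\phi,v_1)>1$ for $s$ small by continuity of $t_{\max}$, so the projected value does not exceed the value at $t=1$) this contradicts $I_{\la,\delta}(u_1,v_1)=l^+_{\la,\delta}$.
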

\begin{proof}
 We divide the proof into two cases. Consider $u_1$ first and $v_1$ can be shown to be positive in exactly same way.\\
 \textbf{Case(1):} Let $\displaystyle\frac{2^*_s}{(q-1)}> \frac{n}{2s}$ then there exists a sequence $\{u_\e\}_{\e>0}\subset C_c^\infty(\Om)$ such that $u_\e \to u_1$ in $L^{2^*_s}(\Om)$ as $\e \to 0$. That means $u_\e^{q-1} \to u_1^{q-1}$ in $L^{\frac{2^*_s}{(q-1)}}(\Om)$ as $\e\to0$. Now let
\begin{equation*}\label{app1}
w_{\e}:= (-\De)^{-s} (\la u_{\e}^{q-1}).
\end{equation*}
Then using Proposition $1.4(iii)$ of \cite{rs1}, we get that $\{w_\e\}$ is a Cauchy sequence in $C^\beta(\mb R^n)$ where $\beta = \min \{s,2s-\frac{n}{p}\}$ and
\begin{equation}\label{first-sol-pos1}
\|w_{\e}\|_{C^\beta(\mb R^n)} \leq C\|u_{\e}^{q-1}\|_{L^{\frac{2^*_s}{(q-1)}}(\Om)}.
\end{equation}
{We know that there exists a $h \in L^{\frac{2^*_s}{q-1}}(\Om)$ such that $w_{\e} \leq h$, so by Lebesgue Dominated convergence theorem we get
\[\limsup_{\e>0} \int_{\mb R^n} ((-\De)^sw_{\e})w_{\e}~\mathrm{d}x < +\infty.\]
This implies that $\{w_\e\}$ is bounded in $X_0$, hence up to a subsequence, $w_{\e}$ converges weakly to a $w\in X_0$ in $X_0$ as $\e \to 0$.
}Then $w$ satisfies the equation
\[(-\De)^s w = \la u_1^{q-1} \; \text{in}\; \Om,\; w=0\; \text{in}\; \mb R^n \setminus \Om \]
then $w_{\e} \to w$ in $C^{\beta}(\mb R^n)$ so passing on the limit as $ \e \to 0$ in \eqref{first-sol-pos1} we obtain $w \in C(\bar{\Om})$. Since $(u_1,v_1)$ solves $(P_{\la,\delta})$ it is clear that $u_1$ satisfies
\[(-\De)^s u_1 \geq \la u_1^{q-1}\; \text{in}\; \Om, \; u_1 =0 \; \text{in}\; \mb R^n \setminus \Om.\]
Therefore $u_1 \geq w$ in $\Om$, thanks to comparison principle (refer Proposition $4.1$ in \cite{rs2}). Also now by strong maximum principle (refer \cite{Silvestre}), we conclude that $w>0$ in $\Om$ and there exists a $m_K>0$ for each $K$ compact subset of $\Om$ such that $w> m_K$ in $K$.\\
\textbf{Case(2):} Let $\displaystyle\frac{2^*_s}{(q-1)}\leq \frac{n}{2s}$ and consider the following iterative scheme
\[(-\De)^s w_k = \la w_{k-1}^{q-1} \; \text{in}\; \Om,\; w_k = 0\; \text{in}\; \mb R^n \setminus \Om\]
with $w_0 = u_1$. Then take $k=1$ at first and let $\{w_{0,\e}\} \subset C_c^\infty(\Om)$ such that $w_{0,\e} \to w_0=u_1$ in $L^{2^*_s}(\Om)$ as $\e \to 0$ which means $w_{0,\e}^{q-1} \to u_1^{q-1}$ in $L^{\frac{2^*_s}{q-1}}(\Om)$ as $\e \to 0$. We define
\begin{equation*}\label{app2}
w_{\e}^1:= (-\De)^{-s} (\la w_{0,\e}^{q-1}).
\end{equation*}
Set $ q_1 = \textstyle \frac{2^*_s}{q-1}$ and we get using Proposition $1.4(ii)$ of \cite{rs1} that $\{w_{\e}^1\}$ is a Cauchy sequence in $L^{q_2}(\Om)$ where $q_2 =\textstyle \frac{nq_1}{n-2q_1s}> q_1$ and
\begin{equation}\label{first-sol-pos2}
\|w_{\e}^1\|_{L^{q_2}(\Om)} \leq C\|w_{0,\e}^{q-1}\|_{L^{q_1}(\Om)}.
\end{equation}
Necessarily $w_{\e}^1 \to w_1$ as $\e \to 0$ in $L^{q_2}(\Om)$ so passing on the limit as $\e \to 0$ in \eqref{first-sol-pos2} we obtain $w_1 \in L^{q_2}(\Om)$. Proceeding similarly, at each stage we get $w_k \in L^{q_k}(\Om)$ where $q_k = \textstyle \frac{nq_{k-1}}{n-2q_{k-1}s}$ and note that $w_k \not\equiv 0$ for each $k$. Clearly $\{q_k\}$ forms an increasing sequence and the map $t \mapsto \frac{nt}{n-2st} $ has no fixed point. So obviously there exists a $k_0>0$ such that $q_{k_0}> \frac{n}{2s}$ and for this $k_0$ we get $w_{k_0+1} \in C^{\beta}(\mb R^n)$, by Proposition $1.4(iii)$ of \cite{rs1}. By comparison principle we already know that $\{w_k\}$ forms a non increasing sequence and $u_1 \geq w_1$. Thus arguing same as Case(1) we get
\[u_1 \geq w_1 \geq w_2 \geq \ldots \geq w_{k_0+1}>0\; \text{in}\; \Om.\]
Also there exists a $m_K>0$ for each $K$ compact subset of $\Om$ such that $w_{k_0+1}> m_K$ in $K$.\hfill{\QED}
\end{proof}\\

\noi This result suggests that there is no harm to consider $(u_1,v_1)$ as positive (as this property of the first solution will be used further while proving the existence of second solution in the case $\mu \leq 4s$).

\subsection{Second solution}
Now, we establish the existence of second solution for $(P_{\la,\delta})$. We prove this by showing that minimum of $I_{\la,\delta}$ is achieved over $\mc N_{\la,\delta}^-$. {We consider two cases separately that is when $\mu \leq 4s$ and when $\mu \geq 4s$. In the first case we are able to show that when $0<\la^{\frac{2}{2-q}}+ \delta^{\frac{2}{2-q}} < \Theta$, $(P_{\la,\delta})$ has two weak solutions whereas in the other case for $\mu > 4s$ we get another threshold $\Theta_0$ which may be '\textit{less than or equal to}' $\Theta$ such that whenever $0<\la^{\frac{2}{2-q}}+ \delta^{\frac{2}{2-q}} < \Theta_0$, $(P_{\la,\delta})$ possesses two weak solutions. }\\

 \begin{Lemma}\label{l^-<PScrit-lev}
If $\mu \leq 4s$ and $0 < \la^{\frac{2}{2-q}}+ \delta^{\frac{2}{2-q}}< \Theta$, then there exists $(w_0,z_0) \in Y\setminus \{(0,0)\}$ such that $w_0,z_0 \geq 0$ and
 \[ \sup_{t \geq 0} I_{\la,\delta}((u_1,v_1)+ t(w_0,z_0) )< c_1:=I_{\la,\delta}(u_1,v_1)+ \frac{n-\mu+2s}{2n-\mu} \left(\frac{{C_s^n} \tilde S_s^H}{2} \right)^{\frac{2n-\mu}{n-\mu+2s}}. \]
 \end{Lemma}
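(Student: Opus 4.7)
The plan is to use the classical Brezis--Nirenberg type test function strategy adapted to the Choquard setting, taking $(w_0, z_0) = (u_\epsilon, u_\epsilon)$ where $u_\epsilon$ is the truncated Aubin--Talenti minimizer from Section 2. The symmetric choice $w_0 = z_0$ exploits the equality case in Lemma \ref{reltn}, where $\tilde S_s^H = 2 S_s^H$ is attained along the diagonal of $Y$. Nonnegativity of $(w_0,z_0)$ is automatic from the construction of $u_\epsilon$.

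First I would set $g_\epsilon(t) := I_{\la,\delta}((u_1,v_1)+t(u_\epsilon,u_\epsilon))$ and observe that $g_\epsilon(0)=I_{\la,\delta}(u_1,v_1)<0$ and $g_\epsilon(t)\to -\infty$ as $t\to\infty$ because of the doubly critical term (of order $t^{22^*_\mu}$). Hence there exists a maximizer $t_\epsilon>0$, and a standard comparison with the pure Aubin--Talenti problem shows that $t_\epsilon$ stays bounded and bounded away from $0$ uniformly for small $\epsilon$. Next, since $(u_1,v_1)$ is a critical point of $I_{\la,\delta}$, expanding $g_\epsilon(t)$ and using $(I_{\la,\delta}'(u_1,v_1),(t u_\epsilon,tu_\epsilon))=0$ cancels the linear-in-$t$ quadratic cross term, leaving
\[
g_\epsilon(t) = I_{\la,\delta}(u_1,v_1) + t^2\, C_s^n\,\|u_\epsilon\|^2 - \tfrac{1}{2^*_\mu}\bigl[B(u_1+tu_\epsilon,v_1+tu_\epsilon)-B(u_1,v_1)-2\,t\,\mathcal B_1\bigr] - \mathcal R_q(t),
\]
where $\mathcal B_1$ is the linear Choquard derivative and $\mathcal R_q(t)\geq 0$ is the remainder from the convex inequality $|a+b|^q\geq |a|^q + q|a|^{q-2}ab$ valid for $q>1$ (this inequality is crucial: it makes the concave perturbation contribute with the right sign).

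Second, I would expand the Choquard bracket, keeping the leading critical term $2t^{22^*_\mu}B(u_\epsilon,u_\epsilon)/(2^*_\mu)$ and the strictly positive order $t^{22^*_\mu-1}$ cross term $\sim \int_{\Om}(|x|^{-\mu}\ast u_\epsilon^{2^*_\mu})u_\epsilon^{2^*_\mu-1}(u_1+v_1)$. Using Proposition \ref{positive-sol} (positivity on compacts), this cross term is bounded below by a positive constant times $\int_{B_\delta} u_\epsilon^{22^*_\mu-1}$, which by the explicit form of $u_\epsilon$ has order $\epsilon^{(n-2s)/2}$ when $22^*_\mu-1<\tfrac{2n}{n-2s}$, equivalently when $\mu\leq 4s$. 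Then I would reduce the estimate, up to controllable errors, to
\[
\sup_{t\geq 0}\Bigl\{t^2\,C_s^n\,\|u_\epsilon\|^2 - \tfrac{2^{22^*_\mu-1}}{2^*_\mu}\,t^{22^*_\mu}B(u_\epsilon,u_\epsilon)\Bigr\} - C\,\epsilon^{(n-2s)/2} + \text{higher order},
\]
and invoke the elementary identity $\sup_{t\geq 0}(at^2-bt^p) = \bigl(\tfrac12-\tfrac1p\bigr)(2a/(pb))^{2/(p-2)}$ with $p=22^*_\mu$. Substituting Proposition \ref{estimates}(i) via \eqref{esti-new} and the lower bound from Proposition \ref{estimates1} together with $\tilde S_s^H = 2S_s^H$, the pure critical supremum evaluates exactly to $\tfrac{n-\mu+2s}{2n-\mu}\bigl(\tfrac{C_s^n \tilde S_s^H}{2}\bigr)^{\frac{2n-\mu}{n-\mu+2s}}$ up to an error of order $O(\epsilon^{n-2s})$.

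The main obstacle is the balance of asymptotic orders in the final step: the positive error from the numerator estimate $\|u_\epsilon\|^2 \leq S_s^{n/2s}/C_s^n + O(\epsilon^{n-2s})$ is of order $\epsilon^{n-2s}$, while the negative contributions come from the Choquard cross term $\int (|x|^{-\mu}\ast u_\epsilon^{2^*_\mu})\,u_1 u_\epsilon^{2^*_\mu-1}$ and from the subcritical $q$-term. A direct computation with the explicit profile of $u_\epsilon$ shows the Choquard cross term is of order $\epsilon^{(n-2s)/2}$ precisely in the regime $\mu\leq 4s$, so it strictly dominates the $O(\epsilon^{n-2s})$ error for $\epsilon$ small, yielding the required strict inequality $\sup_{t\geq 0}g_\epsilon(t)<c_1$. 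The threshold $\mu=4s$ is exactly where the exponent $\tfrac{n-2s}{2}$ of the cross term crosses $n-2s$; for $\mu>4s$ this argument fails and one must replace $\Theta$ by the smaller $\Theta_0$ appearing in part (2) of Theorem \ref{MT}.
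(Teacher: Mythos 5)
Your overall strategy is the one the paper uses (test pair $(u_\e,u_\e)$, cancellation of the linear terms since $(u_1,v_1)$ is a critical point, a convexity bound for the concave $q$-term, a positive $t^{22^*_\mu-1}$ Choquard cross term of order $\e^{\frac{n-2s}{2}}$ beating the $O(\e^{n-2s})$ errors), but the justification of the decisive step is wrong in two places. First, you reduce the Choquard cross term to ``a constant times $\int_{B_\delta}u_\e^{22^*_\mu-1}$'' and claim this has order $\e^{\frac{n-2s}{2}}$ precisely when $22^*_\mu-1<\tfrac{2n}{n-2s}$, ``equivalently $\mu\leq 4s$''. That equivalence is false: $22^*_\mu-1=\tfrac{3n-2\mu+2s}{n-2s}<\tfrac{2n}{n-2s}$ holds iff $\mu>\tfrac{n+2s}{2}$, not iff $\mu\leq 4s$, and in any case you cannot discard the kernel $|x-y|^{-\mu}$ and the $y$-integral when lower-bounding the cross term. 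In the paper the relevant quantity is kept as the double integral $\int_{|x|\leq\e}\int_\Om \frac{|u_\e(x)|^{2^*_\mu-1}|u_\e(y)|^{2^*_\mu}}{|x-y|^\mu}$, which is shown to be bounded below by $r_4\,\e^{\frac{n-2s}{2}}$ for \emph{every} $\mu\in(0,n)$; so the order of the cross term is not where the restriction $\mu\leq 4s$ comes from. The actual role of $\mu\leq 4s$ is that it is equivalent to $2^*_\mu\geq 2$, which is what makes the expansion inequality for the nonlocal bracket valid, namely
\begin{equation*}
\frac{1}{2^*_\mu}B(u_1+tu_\e,v_1+tu_\e)-\frac{1}{2^*_\mu}B(u_1,v_1)-t\,\mathcal B_1
\;\geq\; \frac{t^{22^*_\mu}}{2^*_\mu}B(u_\e,u_\e)+\frac{\rho_2\,t^{22^*_\mu-1}}{22^*_\mu-1}\int_{|x|\leq\e}\int_\Om\frac{|u_\e(y)|^{2^*_\mu}|u_\e(x)|^{2^*_\mu-1}}{|x-y|^\mu}\,\mathrm{d}y\,\mathrm{d}x,
\end{equation*}
with $\rho_2$ coming from the lower bound $u_1,v_1\geq m>0$ on compacts (Proposition \ref{positive-sol}). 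You never justify an inequality of this type; your convexity remark only treats the $q$-term, so the step that simultaneously retains the full critical term and a strictly positive $t^{22^*_\mu-1}$ correction is missing, and it is exactly the step that fails for $2^*_\mu<2$ (i.e.\ $\mu>4s$).

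Second, the reduced supremum you write, $\sup_{t\geq0}\{t^2C_s^n\|u_\e\|^2-\tfrac{2^{22^*_\mu-1}}{2^*_\mu}t^{22^*_\mu}B(u_\e,u_\e)\}$, carries an unexplained factor $2^{22^*_\mu-1}$; the coefficient available from the expansion is $\tfrac{1}{2^*_\mu}$, and it is only with this coefficient (together with $\tilde S_s^H=2S_s^H$ and the estimates \eqref{esti-new} and Proposition \ref{estimates1}) that the supremum evaluates to $\tfrac{n-\mu+2s}{2n-\mu}\bigl(\tfrac{C_s^n\tilde S_s^H}{2}\bigr)^{\frac{2n-\mu}{n-\mu+2s}}+O(\e^{n-2s})$. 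Enlarging the coefficient of the negative term is not an admissible upper bound, and with your factor the claimed ``exact evaluation'' of the critical level would not come out. (Also, your elementary identity for $\sup_{t\geq0}(at^2-bt^p)$ is missing the factor $a$ in front; the value is $a\bigl(1-\tfrac2p\bigr)\bigl(\tfrac{2a}{pb}\bigr)^{2/(p-2)}$.) Fixing the proof requires stating and proving the bracket inequality above (where $\mu\leq 4s$ is used), keeping the kernel in the cross-term estimate, and redoing the maximization of the resulting one-variable function $h_\e$ as in the paper.
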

\begin{proof}
 Using {\eqref{esti-new}}, we can find $r_1>0$ such that
\begin{equation}\label{l^-eq1}
\ds \int_{\mb R^{2n}}\frac{|u_\epsilon(x)- u_\epsilon(y)|^2}{|x-y|^{n+2s}}~\mathrm{d}x\mathrm{d}y {\leq \left((C(n,\mu))^{\frac{n-2s}{2n-\mu}}S_s^H\right)^{\frac{n}{2s}}}+ r_1\epsilon^{n-2s}.
\end{equation}
Also using Proposition \ref{estimates1}, we can find $r_2>0$ such that
\begin{equation}\label{l^-eq2}
\int_{\Om} (|x|^{-\mu} \ast |u_\e|^{2^*_\mu})|u_\e|^{2^*_\mu}~\mathrm{d}x \geq C(n,\mu)^{ {\frac{n}{2s}}} (S_s^H)^{\frac{2n-\mu}{2s}}- r_2 \e^{{2n-\mu}}.
\end{equation}
From proof of Lemma $5.1$ of \cite{TS-ejde}, we know that for fixed $\rho$ such that $1< \rho< \displaystyle\frac{n}{n-2s}$ we have
\begin{equation}\label{l^-eq3}
\int_{\Om}|u_\e|^{\rho} \leq r_3 \e^{\frac{(n-2s)\rho}{2}},
\end{equation}
where $r_3>0$ is an appropriate constant. Now let $0< \e < \delta$ then $u_\e = U_\e$ {in $B_\e(0)$}. \\
\textbf{Claim: } There exists a constant $r_4>0$ such that
\begin{equation}\label{l^-eq4}
\int_{|x|\leq \e}\int_\Om \frac{|u_\e(x)|^{2^*_\mu-1}|u_\e(y)|^{2^*_\mu}}{|x-y|^\mu}~\mathrm{d}x{\mathrm{d}y}  \geq r_4 \e^{\frac{n-2s}{2}}.
\end{equation}
To show this, we split the left hand side of \eqref{l^-eq4} into two integrals and estimate them separately. We recall the definition of $u_\e$ and firstly consider
\begin{equation*}
 \begin{split}
&\int_{|x|\leq \e}\int_{|y|\leq \e}  \frac{|u_\e(x)|^{2^*_{\mu}-1}|u_\e(y)|^{2^*_{\mu}}}{|x-y|^{\mu}}~\mathrm{d}y\mathrm{d}x\\
  &=  \frac{\alpha^{22^*_\mu-1}}{\|\tilde u\|_{L^{2^*_s}(\mb R^n)}^{22^*_\mu-1}} \int_{|x|\leq \e}\int_{|y|\leq \e} \frac{ \e^{\frac{(2s-n)(22^*_\mu-1)}{2}}}{|x-y|^\mu \left(\beta^2+\left|\frac{x}{\e S_s^{\frac{1}{2s}}}\right|^2\right)^\frac{(n-2s)(2^*_\mu-1)}{2}\left(\beta^2 +\left|\frac{y}{\e S_s^{\frac{1}{2s}}}\right|^2\right)^\frac{(n-2s)2^*_\mu}{2}}~\mathrm{d}y\mathrm{d}x\\
  & \geq  E_1 \int_{|x|\leq \e}\int_{|y|\leq \e}\frac{\e^{\frac{(2s-n)(22^*_\mu-1)}{2}-\mu}}{ \left(1+|\frac{x}{\e}|^2\right)^\frac{(n-2s)(2^*_\mu-1)}{2}\left(1+|\frac{y}{\e}|^2\right)^\frac{(n-2s)2^*_\mu}{2}}~\mathrm{d}y\mathrm{d}x\\
  & =  E_1\int_{|x|\leq 1}\int_{|y|\leq 1} \frac{\e^{\frac{n-2s}{2}}}{ (1+|x|^2)^\frac{(n-2s)(2^*_\mu-1)}{2}(1+|y|^2)^\frac{(n-2s)2^*_\mu}{2}}~\mathrm{d}y\mathrm{d}x = {O\left(\e^{\frac{n-2s}{2}}\right)}
 \end{split}
 \end{equation*}
 where $E_1>0$ is appropriate constant that changes value at each step. Secondly, in a similar manner we get
  \begin{equation*}
 \begin{split}
  &\int_{|x|\leq \e}\int_{|y|> \e} \frac{|u_\e(y)|^{2^*_{\mu}}|u_\e(x)|^{2^*_{\mu}-1}}{|x-y|^{\mu}}~\mathrm{d}y\mathrm{d}x\\
   &=  \frac{\alpha^{22^*_\mu-1}}{\|\tilde u\|_{L^{2^*_s}(\mb R^n)}^{22^*_\mu-1}} \int_{|x|\leq \e}\int_{|y|> \e} \frac{ \e^{\frac{(2s-n)(22^*_\mu-1)}{2}}}{|x-y|^\mu \left(\beta^2+\left|\frac{x}{\e S_s^{\frac{1}{2s}}}\right|^2\right)^\frac{(n-2s)(2^*_\mu-1)}{2}\left(\beta^2 +\left|\frac{y}{\e S_s^{\frac{1}{2s}}}\right|^2\right)^\frac{(n-2s)2^*_\mu}{2}}~\mathrm{d}y\mathrm{d}x\\
  & \geq E_1^\prime \int_{|x|\leq \e}\int_{|y|> \e} \frac{\e^{\frac{(2s-n)(22^*_\mu-1)}{2}-\mu}}{ (|y|+\e)^\mu \left(1+|\frac{x}{\e}|^2\right)^\frac{(n-2s)(2^*_\mu-1)}{2}\left(1+|\frac{y}{\e}|^2\right)^\frac{(n-2s)2^*_\mu}{2}}~\mathrm{d}y\mathrm{d}x\\
  & =  E_1^\prime \int_{|x|\leq 1}\int_{|y|> 1} \frac{\e^{\frac{n-2s}{2}}}{ (1+|x|^2)^\frac{(n-2s)(2^*_\mu-1)}{2}(1+|y|^2)^\frac{(n-2s)2^*_\mu}{2}(1+|y|)^\mu}~\mathrm{d}y\mathrm{d}x = {O}\left(\e^{\frac{n-2s}{2}}\right).
 \end{split}
 \end{equation*}
 where $E_1^\prime>0$ is appropriate constant that changes value at each step. This establishes our claim.  We can find appropriate constants $\rho_{1,\la}, \rho_{1,\delta}, \rho_2 >0$ such that the following inequalities holds :\\
(1) $\displaystyle \la \left( \frac{(c+d)^{q}}{q} - \frac{c^{q}}{q} - dc^{1-q} \right) \geq -\frac{\rho_{1,\la}d^{\rho}}{r_3}$ and
 $\displaystyle \delta \left( \frac{(c+d)^{q}}{q} - \frac{c^{q}}{q} - dc^{1-q} \right) \geq -\frac{\rho_{1,\delta}d^{\rho}}{r_3}$ ,
 for all ${c >0}, d\geq 0$.\\
(2) {For each $\e>0$, $m\leq u_1,v_1 $ on compact subsets of $\Om$ where $m>0$ is a constant, we get
\begin{align*}
&\frac{1}{2^*_\mu} B(u_1+tu_\e,v_1+tu_\e)- \frac{1}{2^*_\mu}B(u_1,v_1)- \int_\Om\int_\Om \frac{|u_1(y)|^{2^*_\mu} |v_1(x)|^{2^*_\mu-2}v_1(x)t u_\e(x)}{|x-y|^\mu}~\mathrm{d}y\mathrm{d}x\\
 & \quad \quad- \int_\Om\int_\Om \frac{|v_1(y)|^{2^*_\mu} |u_1(x)|^{2^*_\mu-2}u_1(x)t u_\e(x)}{|x-y|^\mu}~\mathrm{d}y\mathrm{d}x\\
& \geq \frac{t^{22^*_\mu}}{2^*_\mu} B(u_\e,u_\e)+ \frac{\rho_2 t^{22^*_\mu-1}}{ (22^*_\mu-1)} \int_{|x|\leq \e}\int_\Om \frac{|u_\e(y)|^{2^*_{\mu}}|u_\e(x)|^{2^*_{\mu}-1}}{|x-y|^{\mu}}~\mathrm{d}y\mathrm{d}x.
\end{align*}}
{We remark that such an $m$ exists because of Proposition \ref{positive-sol}.}
From Theorem \ref{first-sol} we know that $(u_1,v_1)$ is a weak solution of $(P_{\la,\delta})$. Therefore, we have
\begin{align*}
&I_{\la,\delta}((u_1,v_1)+ t(u_\e,u_\e))- I_{\la,\delta}(u_1,v_1)\\
& = I_{\la,\delta}((u_1,v_1)+ t(u_\e,u_\e))- I_{\la,\delta}(u_1,v_1) - t\bigg( \langle u_1,u_\e \rangle + \langle v_1,u_\e \rangle \bigg.\\
& \quad \left.-  \int_\Om (\la|u_1|^{q-2}u_1u_\e + \delta |v_1|^{q-2}v_1u_\e)~\mathrm{d}x- \int_\Om (|x|^{-\mu}\ast |u_1|^{2^*_\mu})|v_1|^{2^*_\mu-2}v_1u_\e~\mathrm{d}x\right.\\
& \quad \quad\left. - \int_\Om (|x|^{-\mu}\ast |v_1|^{2^*_\mu})|u_1|^{2^*_\mu-2}u_1u_\e~\mathrm{d}x  \right)\\
&= \frac{t^2}{2} {C_s^n} \|(u_\e,u_\e)\|^2 - \la \int_\Om \left(\frac{|u_1+tu_\e|^q-|u_1|^q}{q}- t|u_1|^{q-2}u_1u_\e \right)\mathrm{d}x\\
& \quad - \delta \int_\Om \left(\frac{|v_1+tu_\e|^q-|v_1|^q}{q}- t|{v_1}|^{q-2}v_1u_\e \right)\mathrm{d}x - \left( \frac{B(u_1+tu_\e,v_1+tu_\e)-B(u_1,v_1) }{2^*_\mu}\right.\\
& \quad \quad \left.- \int_\Om (|x|^{-\mu}\ast |u_1|^{2^*_\mu})|v_1|^{2^*_\mu-2}v_1 {t}u_\e~\mathrm{d}x - \int_\Om (|x|^{-\mu}\ast |v_1|^{2^*_\mu})|u_1|^{2^*_\mu-2}u_1{t}u_\e~\mathrm{d}x  \right)
\end{align*}
{which on  using inequality $(2)$ with \eqref{l^-eq1}-\eqref{l^-eq4} gives}
\begin{align*}
&I_{\la,\delta}((u_1,v_1)+ t(u_\e,u_\e))- I_{\la,\delta}(u_1,v_1)\\
&\leq {t^2} {C_s^n} \left( C(n,\mu)^{\frac{n(n-2s)}{2s(2n-\mu)}}(S_s^H)^{\frac{n}{2s}}+ r_1 \e^{n-2s}\right) + (\rho_{1,\la}+ \rho_{1,\delta})t^\rho\e^{\frac{(n-2s)\rho}{2}}\\
&\quad - \frac{t^{{2}2^*_\mu}}{{2^*_\mu}}\left(C(n,\mu)^{\frac{n}{2s}}(S_s^H)^{\frac{2n-\mu}{2s}}- r_2\e^{{2n-\mu}} \right)
- \frac{t^{22^*_\mu-1}\rho_2}{(22^*_\mu-1)}r_4\e^{\frac{n-2s}{2}}.
\end{align*}
{Now we define the function $h_\e: [0,\infty) \to \mb R$ as
 \begin{align*}
 h_\e(t)& ={t^2} {C_s^n} \left( C(n,\mu)^{\frac{n(n-2s)}{2s(2n-\mu)}}(S_s^H)^{\frac{n}{2s}}+ r_1 \e^{n-2s}\right) + (\rho_{1,\la}+ \rho_{1,\delta})t^\rho\e^{\frac{(n-2s)\rho}{2}}\\
 &\quad- \frac{t^{{2}2^*_\mu}}{{2^*_\mu}}\left(C(n,\mu)^{\frac{n}{2s}}(S_s^H)^{\frac{2n-\mu}{2s}}- r_2\e^{{2n-\mu}} \right)
- \frac{t^{22^*_\mu-1}\rho_2}{(22^*_\mu-1)}r_4\e^{\frac{n-2s}{2}}.
 \end{align*}
 Then $h_\e$ attains its maximum at
  \begin{align*}
  t_\e = &{(C_s^n)^{\frac{n-2s}{2(n-\mu+2s)}}}C(n,\mu)^{-\frac{n(n-2s)}{4s(2n-\mu)}}(S_s^H)^{-\frac{(n-2s)(n-\mu)}{4s(n-\mu+2s)}}\\
  &\quad - \frac{\rho_2r_4(n-2s)}{4(n-\mu+2s)}C(n,\mu)^{-\frac{n}{2s}}(S_s^H)^{\frac{\mu-2n}{2s}}\e^{\frac{n-2s}{2}}+o(\e^{\frac{n-2s}{2}}).
  \end{align*}
  Therefore we get
\begin{equation*}
\begin{split}
& {\sup_{t \geq 0} (I_{\la,\delta}((u_1,v_1)+ t(u_\e,u_\e))- I_{\la,\delta}(u_1,v_1))} \\
& \leq \frac{n-\mu+2s}{2n-\mu}({C_s^n} S_s^H)^{\frac{2n-\mu}{n-\mu+2s}} - \frac{\rho_2r_4\e^{\frac{n-2s}{2}}}{22^*_\mu-1}C(n,\mu)^{-\frac{n(3n-2\mu+2s)}{4s(2n-\mu)}}(S_s^H)^{\frac{(\mu-n)(3n-2\mu+2s)}{4s(n-\mu+2s)}}+o(\e^{\frac{n-2s}{2}})\\
 &<\frac{n-\mu+2s}{(2n-\mu)} ({C_s^n} S_s^H)^{\frac{2n-\mu}{n-\mu+2s}}= \frac{n-\mu+2s}{(2n-\mu)} \left(\frac{C_s^n\tilde S_s^H}{2}\right)^{\frac{2n-\mu}{n-\mu+2s}}  .
 \end{split}
 \end{equation*}}
Choosing $(w_0,z_0)= (u_\e,u_\e)$, for appropriate choice of $\e$ as shown above, we obtain the result.\hfill{\QED}
\end{proof}


\begin{Corollary}
It holds that $l_{\la,\delta}^- < c_1$.
\end{Corollary}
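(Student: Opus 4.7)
The strategy is to locate a scalar $t^\ast>0$ such that $(u_1,v_1)+t^\ast(w_0,z_0)$ itself lies in $\mc N^-_{\la,\delta}$, where $(w_0,z_0)$ is the pair supplied by Lemma \ref{l^-<PScrit-lev}. Once such a $t^\ast$ is produced, the claim follows at once by combining the infimum definition of $l^-_{\la,\delta}$ with the sup bound of Lemma \ref{l^-<PScrit-lev}:
\[
l^-_{\la,\delta} \leq I_{\la,\delta}\bigl((u_1,v_1)+t^\ast(w_0,z_0)\bigr)\leq \sup_{t\geq 0} I_{\la,\delta}\bigl((u_1,v_1)+t(w_0,z_0)\bigr)< c_1.
\]

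To produce $t^\ast$, set $\gamma(t):=(u_1+tw_0,\,v_1+tz_0)$, which lies in $Y\setminus\{(0,0)\}$ for every $t\geq 0$ because $u_1,v_1>0$ in $\Om$ by Proposition \ref{positive-sol} and $w_0,z_0\geq 0$. Lemma \ref{Theta-def-lem} then yields a unique $t_2(t)>0$ with $t_2(t)\gamma(t)\in \mc N^-_{\la,\delta}$, characterised as the larger positive root of $\varphi'_{\gamma(t)}(s)=0$. We claim that $t\mapsto t_2(t)$ is continuous on $[0,\infty)$, with $t_2(0)>1$ and $t_2(t)\to 0$ as $t\to\infty$. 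Indeed, since $\gamma(0)=(u_1,v_1)\in \mc N^+_{\la,\delta}$, uniqueness in Lemma \ref{Theta-def-lem} forces the companion scalar $t_1(0)$ to equal $1$, whence $t_2(0)>t_{\max}(\gamma(0))>1$. Continuity of $t_2(\cdot)$ follows from the implicit function theorem applied to $\varphi'_{\gamma(t)}(s)=0$ at $s=t_2(t)$, using the non-degeneracy $\varphi''_{\gamma(t)}(t_2(t))<0$, which is a restatement of $t_2(t)\gamma(t)\in \mc N^-_{\la,\delta}$ via the scaling identity $\varphi''_{u,v}(t)=t^{-2}\varphi''_{tu,tv}(1)$ and Lemma \ref{N_0-empty}. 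For the asymptotic, the three terms in the Nehari equation defining $t_2(t)$ scale as $t^2$, $t^q$ and $t^{22^*_\mu}$ as $t\to\infty$; balancing the quadratic against the Choquard-critical contribution then forces $t_2(t)=O(1/t)$.

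With these properties in hand, the intermediate value theorem supplies $t^\ast>0$ with $t_2(t^\ast)=1$, so that $\gamma(t^\ast)=(u_1,v_1)+t^\ast(w_0,z_0)$ itself belongs to $\mc N^-_{\la,\delta}$, and the inequality chain displayed above finishes the proof. The only delicate step is the asymptotic decay $t_2(t)\to 0$ as $t\to\infty$; the continuity and the inequality $t_2(0)>1$ are routine consequences of the fibering-map analysis carried out in Section 3.
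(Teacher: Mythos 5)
Your argument is correct and rests on the same two pillars as the paper's proof: exhibit a point of the ray $t\mapsto (u_1,v_1)+t(w_0,z_0)$ lying on $\mc N_{\la,\delta}^-$, then conclude via $l_{\la,\delta}^-\leq I_{\la,\delta}\bigl((u_1,v_1)+t^\ast(w_0,z_0)\bigr)\leq \sup_{t\geq 0}I_{\la,\delta}\bigl((u_1,v_1)+t(w_0,z_0)\bigr)<c_1$, which is exactly how the paper finishes using Lemma \ref{l^-<PScrit-lev}. Where you differ is in how the crossing is produced. The paper runs the classical Tarantello-type argument: it splits $Y\setminus\mc N_{\la,\delta}^-$ into the sets $U_1,U_2$ determined by comparing $\|(u,v)\|$ with $t_2\bigl((u,v)/\|(u,v)\|\bigr)$, shows $\mc N^+_{\la,\delta}\subset U_1$ (so the starting point is in $U_1$), pushes the endpoint $(u_1,v_1)+M_0(w_0,z_0)$ into $U_2$ by invoking a uniform bound $0<t_2<R$ on the unit sphere, and concludes by connectedness of the path. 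You instead follow the scalar $t_2(\gamma(t))$ along the ray and apply the intermediate value theorem, getting continuity from the implicit function theorem (legitimate, since $\varphi''_{\gamma(t)}(t_2(t))<0$ by $\mc N^-$-membership and Lemma \ref{N_0-empty}), $t_2(0)>1$ from $(u_1,v_1)\in\mc N_{\la,\delta}^+$, and smallness of $t_2(t)$ for large $t$. The step you flag as delicate is the only one needing more detail, and it is easily made rigorous: since $u_1,v_1>0$ and $w_0=z_0=u_\e\geq 0$, one has $B(\gamma(t))\geq t^{22^*_\mu}B(w_0,z_0)$, while $\|\gamma(t)\|^2=O(t^2)$ and the concave term is $O(t^q)$; hence $t_{\max}(\gamma(t))=O(t^{-1})<1$ and $m_{\gamma(t)}(1)=O(t^2)<2B(\gamma(t))$ for $t$ large, so $1$ lies beyond the decreasing-branch root, i.e.\ $t_2(\gamma(t))<1$. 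In effect your route trades the paper's (asserted, not proved) uniform bound $R$ for $t_2$ on the unit sphere for an explicit asymptotic along the ray; both are standard, and yours is arguably the more self-contained bookkeeping, at the modest price of justifying continuity of $t_2$ along the ray.
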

\begin{proof}
For each $(u,v)\in Y$, by Lemma \ref{Theta-def-lem} we know that there exists a $t_2(u,v)>0$ (notation changed to show that $t_2$ depends on $(u,v)$) such that $t_2( u, v)(u,v)\in \mc N_{\la,\delta}^-$. We consider two sets
\begin{align*}
U_1 &:= \left\{(u,v)\in Y: \; \|(u,v)\|< t_2\left(\frac{(u,v)}{\|(u,v)\|}\right) \right\}\; \text{and}\\
 U_2 &:= \left\{(u,v)\in Y: \; \|(u,v)\|> t_2\left(\frac{(u,v)}{\|(u,v)\|}\right) \right\}.
\end{align*}
\textbf{Claim:} $Y \setminus \mc N_{\la,\delta}^- = U_1 \cup U_2$.\\
For any $(u,v)\in Y$ we define $(\hat u,\hat v) := \textstyle\frac{(u,v)}{\|(u,v)\|}$. Now let $(u,v) \in \mc N_{\la,\delta}^-$. Then we know that there exists a $t_2(\hat u,\hat v)>0$ such that $t_2(\hat u,\hat v)(\hat u,\hat v) \in \mc N_{\la,\delta}^-$. But $(u,v) \in \mc N_{\la,\delta}^-$ implies that it must be that $\textstyle\frac{t_2(\hat u,\hat v)}{\|(u,v)\|} =1 $ which means $t_2(\hat u,\hat v) = \|(u,v)\|$. On the other hand, let $(u,v) \in Y$ be such that $t_2(\hat u,\hat v)=\|(u,v)\|$. By definition $t_2(\hat u,\hat v)(\hat u,\hat v) \in \mc N_{\la,\delta}^-$ which implies that $(u,v)\in \mc N_{\la,\delta}^-$. This proves the claim. \\
Next let $(u,v)\in \mc N_{\la,\delta}^+$ then by Lemma \ref{Theta-def-lem} we know that there exists a $t_1(\hat u,\hat v)>0$ such that $t_1(\hat u,\hat v)(\hat u,\hat v) \in \mc N_{\la,\delta}^+$. But $(u,v)\in \mc N_{\la,\delta}^+$ implies that $\frac{t_1(\hat u,\hat v)}{\|(u,v)\|}=1$. This gives $t_2(\hat u,\hat v)>t_1(\hat u,\hat v)= \|(u,v)\|$ that is $(u,v)\in U_1$. Therefore $\mc N_{\la,\delta}^+ \subset U_1$ and thus $(u_1,v_1)\in U_1$. \\
We consider the map $\gamma_M \in C([0,1],Y)$ defined by $\gamma_M(t):= (u_1,v_1)+ t M(w_0,z_0)$ for $M>0$, where $(w_0,z_0)$ is defined Lemma \ref{l^-<PScrit-lev}. Clearly $\gamma(0)=(u_1,v_1)$ and $\gamma(1)= (u_1,v_1)+M(w_0,z_0)$. There exists a $R>0$ such that $0< t_2(u,v)<R$ on the set $\{(u,v)\in Y:\; \|(u,v)\|=1 \}$. Let us choose $M_0>0$ such that
\[M_0 \geq \frac{|R^2 - \|(u_0,v_0)\|^2 |}{\|(w_0,z_0)\|^2}.\]
Then
\begin{align*}
\|(u_1,v_1)+ M_0(w_0,z_0)\|^2 &\geq \|(u_1,v_1)\|^2 + M_0^2 \|(w_0,z_0)\|^2+ O(M_0) \\
&\geq R^2 > \left(t_2\left(\frac{(u_1,v_1)+ M_0(w_0,z_0)}{\|(u_1,v_1)+ M_0(w_0,z_0)\|}\right)\right)^2
\end{align*}
which implies $(u_1,v_1)+ M_0(w_0,z_0) \in U_2$. Now since $\gamma_{M_0}$ is a continuous path starting from $(u_1,v_1)$ to $(u_1,v_1)+ M_0(w_0,z_0)$ and $Y \setminus \mc N_{\la,\delta}^- = U_1 \cup U_2$, there must exists a $\hat t>0$ such that $\|(u_1,v_1)+ M_0(w_0,z_0)\| = \textstyle t_2\left(\frac{(u_1,v_1)+ M_0(w_0,z_0)}{\|(u_1,v_1)+ M_0(w_0,z_0)\|}\right) $ that is $\gamma_{M_0}(\hat t) \in \mc N_{\la,\delta}^-$. Therefore $(u_1,v_1)+\hat t M_0(w_0,z_0) \in \mc N_{\la,\delta}^-$. Finally using Lemma \ref{l^-<PScrit-lev} we obtain
\[l_{\la,\delta}^- \leq I_{\la,\delta}((u_1,v_1)+ \hat t M_0(w_0,z_0)) \leq \sup_{t \geq 0} I_{\la,\delta}((u_1,v_1)+ t(w_0,z_0))< c_1. \]
This completes the proof. \hfill{\QED
}
\end{proof}

 \begin{Lemma}\label{l^-<PScrit-lev-new}
 If $\mu >4s$ then there exists a $\Upsilon >0$ such that whenever $0 < \la^{\frac{2}{2-q}}+ \delta^{\frac{2}{2-q}}< \Upsilon$, we have
 \[l_{\la,\delta}^- < c_0:= \frac{n-\mu+2s}{(2n-\mu)}\left( \frac{{C_s^n}\tilde{S_s^H}}{2}\right)^{\frac{2n-\mu}{n-\mu+2s} }- D_0\left(\la^{\frac{2}{2-q}}+ \delta^{\frac{2}{2-q} }\right)\]
 where $D_0$ has been defined in Lemma \ref{unif-lwr-bd}.
 \end{Lemma}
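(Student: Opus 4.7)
The plan is to construct an explicit test function, ride it onto $\mc N_{\la,\delta}^-$ via Lemma \ref{Theta-def-lem}, and bound the resulting value of $I_{\la,\delta}$ from above by $c_0$. The essential difference from Lemma \ref{l^-<PScrit-lev} is that the translation by $(u_1,v_1)$ used there to harvest a boost of order $\epsilon^{(n-2s)/2}$ via the Choquard interaction is no longer of useful order in the regime $\mu>4s$, since the corresponding remainder term $O(\epsilon^{2n-\mu})$ becomes comparable. I would therefore discard the translation and work with $(u_\epsilon, u_\epsilon)$ directly; the ``negative budget'' $-D_0(\la^{2/(2-q)}+\delta^{2/(2-q)})$ already built into the definition of $c_0$ will have to cover the shortfall, which accounts for the smaller admissible range $\Upsilon$.

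Concretely, Lemma \ref{Theta-def-lem} produces a unique $t_\epsilon>0$ with $(t_\epsilon u_\epsilon, t_\epsilon u_\epsilon)\in \mc N_{\la,\delta}^-$ at which $\sup_{t\geq 0} I_{\la,\delta}(tu_\epsilon,tu_\epsilon)$ is attained, so $l_{\la,\delta}^- \leq \sup_{t\geq 0} I_{\la,\delta}(tu_\epsilon,tu_\epsilon)$. Decomposing $I_{\la,\delta}(tu_\epsilon,tu_\epsilon)=g(t)-h(t)$ with
\[
g(t)=t^2 C_s^n \|u_\epsilon\|^2-\frac{t^{22^*_\mu}}{2^*_\mu}B(u_\epsilon,u_\epsilon), \qquad h(t)=\frac{t^q(\la+\delta)}{q}\int_\Omega|u_\epsilon|^q\,\mathrm{d}x,
\]
a one-variable maximization combined with \eqref{esti-new} and Proposition \ref{estimates1} yields $\max_{t\geq 0}g(t)\leq A+O(\epsilon^{n-2s})$ with $A=\tfrac{n-\mu+2s}{2n-\mu}\bigl(\tfrac{C_s^n\tilde{S_s^H}}{2}\bigr)^{(2n-\mu)/(n-\mu+2s)}$; the remainder $O(\epsilon^{2n-\mu})$ is absorbed into $O(\epsilon^{n-2s})$ since $2n-\mu>n-2s$. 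Lemma \ref{inf-pos-neg}(ii) furnishes a $\la,\delta$-independent lower bound on $\|(u,v)\|$ on $\mc N_{\la,\delta}^-$, which together with the fact that $\|u_\epsilon\|$ is bounded above gives a uniform lower bound $t_\epsilon\geq\tau_0>0$. Since $h$ is increasing in $t$ and $\int_\Omega|u_\epsilon|^q\,\mathrm{d}x\geq C_2\epsilon^{\alpha_q}$ (with $\alpha_q=q(n-2s)/2$ if $q<n/(n-2s)$ and $\alpha_q=n-q(n-2s)/2$ otherwise, with a logarithmic adjustment at equality), one arrives at
\[
\sup_{t\geq 0} I_{\la,\delta}(tu_\epsilon,tu_\epsilon)\leq A + C_1\epsilon^{n-2s} - C_3(\la+\delta)\epsilon^{\alpha_q}.
\]

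Next I would optimize in $\epsilon$. Taking $\epsilon$ as an appropriate power of $\la+\delta$ to balance the two $\epsilon$-dependent terms produces
\[
\sup_{t\geq 0} I_{\la,\delta}(tu_\epsilon,tu_\epsilon)\leq A - C_4(\la+\delta)^{2/(2-q)}
\]
for an explicit constant $C_4$, valid for $\la+\delta$ small enough. The convexity bound $(\la+\delta)^{2/(2-q)} \geq \la^{2/(2-q)}+\delta^{2/(2-q)}$ (valid since $2/(2-q)>1$) then permits the comparison with $c_0=A - D_0(\la^{2/(2-q)}+\delta^{2/(2-q)})$: provided $C_4\geq D_0$, the strict inequality $l_{\la,\delta}^- < c_0$ follows, and we take $\Upsilon$ to be the largest threshold for which all the above asymptotics are valid. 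The chief obstacle is precisely this quantitative constant comparison: $D_0$ emerges from the Young splitting in Lemma \ref{unif-lwr-bd} while $C_4$ emerges from the blow-up profile of $U_\epsilon$, so it is not transparent a priori that $C_4\geq D_0$; if the na\"ive estimate fails, one must exploit the remaining freedom in the choice of $\epsilon$ (shrinking $\Upsilon$ further) to manufacture the strict inequality. The dichotomy $q\lessgtr n/(n-2s)$ in the exponent $\alpha_q$ adds a mostly mechanical but unavoidable case analysis.
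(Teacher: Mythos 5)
Your route is essentially the paper's: the same test pair $(u_\e,u_\e)$ (correctly discarding the translation by $(u_1,v_1)$ used for $\mu\le 4s$), the same splitting of $I_{\la,\de}(tu_\e,tu_\e)$ into the critical part $J_{\la,\de}(tu_\e,tu_\e)$ and the concave part $\frac{t^q}{q}(\la+\de)\int_\Om|u_\e|^q\,\mathrm{d}x$, the same use of \eqref{esti-new} and Proposition \ref{estimates1} to bound the critical maximum by $\frac{n-\mu+2s}{2n-\mu}\bigl(\frac{C_s^n\tilde S_s^H}{2}\bigr)^{\frac{2n-\mu}{n-\mu+2s}}+O(\e^{n-2s})$, the same three-case lower bound for $\int_\Om|u_\e|^q$, the same choice of $\e$ as a power of the parameters, and the same final passage $l^-_{\la,\de}\le \sup_{t\ge0}I_{\la,\de}(tu_\e,tu_\e)$ via Lemma \ref{Theta-def-lem}. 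Your one structural variant, bounding $t_\e\ge\tau_0>0$ through Lemma \ref{inf-pos-neg}(ii) instead of the paper's device of first choosing $\Upsilon_1$ so that $c_0>0$ and then a fixed $\bar t$ with $\sup_{t\in[0,\bar t]}I_{\la,\de}<c_0$, is legitimate and slightly cleaner.

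The gap is in the endgame. You compress the three regimes into the single balanced bound $A-C_4(\la+\de)^{2/(2-q)}$ (your notation) and make everything hinge on the comparison $C_4\ge D_0$, to be rescued, should it fail, by ``remaining freedom in $\e$'' and a smaller $\Upsilon$. Keep the regimes separate, as the paper does, with $\e^{n-2s}=X:=\la^{2/(2-q)}+\de^{2/(2-q)}$. If $n<(n-2s)q$ the subtracted term is of order at least $(\la+\de)\,X^{\frac{n}{n-2s}-\frac q2}\ \gtrsim\ X^{\,1-q+\frac{n}{n-2s}}$, whose exponent is strictly less than $1$; if $n=(n-2s)q$ it carries an extra factor $|\log\e|$. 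In these two regimes the gain dominates both the error $O(X)$ and the budget $D_0X$ once $X$ is small, so no constant comparison arises and your ``shrink $\Upsilon$'' fallback genuinely works — this is exactly how the paper closes these cases. But when $n>(n-2s)q$ the $\e$-optimized gain is exactly of order $(\la+\de)^{2/(2-q)}\simeq X$, the same order as the error $O(\e^{n-2s})$ and as $D_0X$; since you have already optimized over $\e$ there is no leftover freedom, and shrinking $\la,\de$ rescales both sides of the required inequality identically, so your fallback cannot manufacture $C_4\ge D_0$, where $C_4$ comes from the blow-up profile and $D_0$ from Lemma \ref{unif-lwr-bd}. In that regime your argument, as written, does not close. (To be fair, the paper's own line for this case — asserting the inequality ``for $\Upsilon_2$ small enough'' after noting $(\la+\de)X^{q/2}\simeq X$ — leans on the same unestablished constant comparison; but flagging the obstacle, as you honestly do, is not the same as resolving it.)
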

\begin{proof}
Let $w_0=z_0 = u_\e$ and define
\[J_{\la,\delta}(u,v)= \frac{{C_s^n}}{2} \|(u,v)\|^2 - \frac{{1} }{2^*_\mu} B(u,v)\; \text{and}\; f(t)= J_{\la,\delta}(tw_0,tz_0).\]
Then $f(0)= 0$ and $f(t)<0$ if $t \in (0, T)$, $f(t)>0$ if $t >T$ where
$T= \textstyle\left( \frac{2^*_\mu {C_s^n}\|(w_0,z_0)\|^2}{2 B(w_0,z_0)} \right)^{\frac{1}{2(2^*_\mu-1)}}.$
It is next easy thing to verify that $f$ attains its maximum at
$t_* = \textstyle\left( \frac{{C_s^n}\|w_0,z_0\|^2}{2 B(w_0,z_0)} \right)^{\frac{1}{2(2^*_\mu-1)}}.$
Therefore using \eqref{esti-new} and Proposition \ref{estimates1} we have
\begin{equation}\label{sec-sol1}
\begin{split}
&\sup_{t \geq 0} J_{\la,\delta}(tw_0,tz_0) \\
&= f(t_*) = \frac{{C_s^n}t_*^2}{2} \|(w_0,z_0)\|^2 - \frac{t_*^{22^*_\mu}}{2^*_\mu} B(w_0,z_0)=  \left( \frac{n-\mu+2s}{2n-\mu} \right) \left( \frac{{C_s^n}\|{u_\e} \|^2}{ B({u_\e,u_\e})^{\frac{1}{2^*_\mu}}} \right)^{\frac{2^*_\mu}{2^*_\mu-1}}\\
& \leq  \left( \frac{n-\mu+2s}{2n-\mu} \right) \left(  \frac{{C_s^n}(C(n,\mu)^{\frac{n-2s}{2n-\mu}\cdot \frac{n}{2s}} (S_s^H)^{\frac{n}{2s}} + {O}(\e^{n-2s}))}{ \left( C(n,\mu)^{\frac{n}{2s}} (S_s^H)^{\frac{2n-\mu}{2s}} - {O}(\e^{2n-\mu}) \right)^{\frac{n-2s}{2n-\mu}}} \right)^{\frac{2^*_\mu}{2^*_\mu -1}}\\
& \leq \left( \frac{n-\mu+2s}{2n-\mu} \right){(C_s^n)^{\frac{2^*_\mu}{2^*_\mu-1}}} \left( S_s^H + o(\e^{n-2s}) \right)^{\frac{2^*_\mu}{2^*_\mu -1}}\\
 &= \left( \frac{n-\mu+2s}{2n-\mu} \right) \left( \left( \frac{{C_s^n}\tilde S_s^H}{2}\right)^{\frac{2^*_\mu}{2^*_\mu-1}} +o(\e^{n-2s})\right)
\end{split}
\end{equation}
Recalling the definition of $c_0$, we note that if $0 < \la^{\frac{2}{2-q}}+ \delta^{\frac{2}{2-q}}< \Upsilon_1$ where $\Upsilon_1>0$ is chosen such that $c_0 >0$ for example $\Upsilon_1 = \displaystyle\frac{n-\mu+2s}{2 D_0 (2n-\mu)} \left( \frac{{C_s^n}\tilde S_s^H}{2}\right)^{\frac{2n-\mu}{n-\mu+2s}}$. Since $I_{\la,\delta}(tw_0,tz_0) \leq \frac{t^2}{2}\|(w_0,z_0)\|^2$ for $t\geq 0$, we can find $\bar t >0$ such that
$\sup\limits_{t \in [0,\bar t]} I_{\la,\delta}(tw_0,tz_0) < c_0$
whenever $0 < \la^{\frac{2}{2-q}}+ \delta^{\frac{2}{2-q}}< \Upsilon_1$. Let us define function  $H_{\la,\delta} : Y \to \mb R$ as $ H_{\la,\delta}(u,v) := \displaystyle \frac{1}{q} \q.$  Now using \eqref{sec-sol1} we have
\begin{align*}
\sup_{t \geq \bar t} I_{\la,\delta}(tw_0,tz_0) &= \sup_{t \geq \bar t} (J_{\la,\delta}(tw_0,tz_0)- H_{\la,\delta}(tw_0,tz_0))\\\
& \leq  \left( \frac{n-\mu+2s}{2n-\mu} \right) \left( \left( \frac{{C_s^n}\tilde S_s^H}{2}\right)^{\frac{2^*_\mu}{2^*_\mu-1}} + O(\e^{n-2s})\right)- \frac{\bar t^q}{q}(\la+ \delta)\int_{\mb R^n} |u_\e|^q ~\mathrm{d}x\\
& \leq \left( \frac{n-\mu+2s}{2n-\mu} \right)  \left( \frac{{C_s^n}\tilde S_s^H}{2}\right)^{\frac{2^*_\mu}{2^*_\mu-1}} + O(\e^{n-2s})- \frac{\bar t^q}{q}(\la+ \delta)\int_0^{\delta_*} |u_\e|^q ~\mathrm{d}x
\end{align*}
for any $\delta_*>0$. {Fix $\delta_*<\delta$ and letting $0 < \e  < \delta_*$ we estimate
\begin{align*}
\int_{B(0,\delta_*)} |u_\e|^q ~\mathrm{d}x &= \int_{B(0,\delta_*)} |U_\e|^q ~\mathrm{d}x \geq C_1 |S_{n-1}|\e^{n-\frac{(n-2s)q}{2}}\int_0^{\frac{\delta_*}{\e}} \frac{r^{n-1}}{(1 + r^2)^{\frac{(n-2s)q}{2}}}~\mathrm{d}r\\
&\geq C_2 |S_{n-1}|\e^{n-\frac{(n-2s)q}{2}}\int_0^{\frac{\delta_*}{\e}}{r^{n-1-{(n-2s)q}}}~\mathrm{d}r\\
&\geq C_2 |S_{n-1}|\e^{n-\frac{(n-2s)q}{2}}
\begin{cases}
\int_1^{\frac{\delta_*}{\e}}{r^{n-1-{(n-2s)q}}}~\mathrm{d}r\quad \text{if} \; n\leq (n-2s)q\\
\int_0^{\frac{\delta_*}{\e}}{r^{n-1-{(n-2s)q}}}~\mathrm{d}r\quad \text{if} \; n> (n-2s)q
\end{cases}\\
& \simeq C_3
\begin{cases}
&\e^{n-\frac{(n-2s)q}{2}}, \;\text{if} \; n< (n-2s)q\\
& \e^{\frac{n}{2}}|\log\e|, \; \text{if}\; n= (n-2s)q\\
& \e^{\frac{(n-2s)q}{2}}, \; \text{if}\; n> (n-2s)q
\end{cases}
\end{align*}
where $C_1,C_2$ and $C_3$ are appropriate positive constants. Therefore using $1<q<2$ we obtain
 \begin{align*}
&\sup_{t \geq t_0} I_{\la,\delta}(tw_0,tz_0)\\
 &\leq  \left( \frac{n-\mu+2s}{2n-\mu} \right) \left( \frac{\tilde S_s^H}{2}\right)^{\frac{2^*_\mu}{2^*_\mu-1}}+
 \begin{cases}
&O(\e^{n-2s})- (\la+\delta)O(\e^{n-\frac{(n-2s)q}{2}}), \;\; \text{if} \; n< (n-2s)q\\
&-(\la+\delta) O( \e^{\frac{n}{2}}|\log\e|), \;\; \text{if}\;  n= (n-2s)q\\
&- (\la+\delta)O(\e^{\frac{(n-2s)q}{2}}), \;\; \text{if}\; n> (n-2s)q
\end{cases}
\end{align*}
 This implies for $\e = (\la^{\frac{2}{2-q}} + \delta^{\frac{2}{2-q}})^{\frac{1}{n-2s} } \leq \delta_*$
\begin{align*}
\sup_{t \geq t_0} I_{\la,\delta}(tw_0,tz_0) &\leq  \left( \frac{n-\mu+2s}{2n-\mu} \right) \left( \frac{{C_s^n}\tilde S_s^H}{2}\right)^{\frac{2^*_\mu}{2^*_\mu-1}}\\
& +
\begin{cases}
&C( \la^{\frac{2}{2-q}} + \delta^{\frac{2}{2-q}}) - C(\la+ \delta)(\la^{\frac{2}{2-q}} + \delta^{\frac{2}{2-q}})^{\frac{1}{n-2s}({n-\frac{(n-2s)q}{2}})}, \;\text{if} \; n< (n-2s)q\\
& -C(\la+\delta)(\la^{\frac{2}{2-q}} + \delta^{\frac{2}{2-q}})^{\frac{n}{2(n-2s)} }|\log((\la^{\frac{2}{2-q}} + \delta^{\frac{2}{2-q}})^{\frac{1}{n-2s} })|, \; \text{if}\; n= (n-2s)q\\
& -C(\la+\delta)(\la^{\frac{2}{2-q}} + \delta^{\frac{2}{2-q}})^{\frac{q}{2} }, \; \text{if}\; n> (n-2s)q
\end{cases}.
\end{align*}
Let $n< (n-2s)q$ then $\textstyle 1+ \frac{2}{(2-q)(n-2s)}\left(n- \frac{(n-2s)q}{2}\right) < \frac{2}{2-q}$ which implies that we can choose a $\Upsilon_2 >0$ small enough such that if $0<\la^{\frac{2}{2-q}} + \delta^{\frac{2}{2-q}} < \Upsilon_2$ then
\[C\left( \la^{\frac{2}{2-q}} + \delta^{\frac{2}{2-q}}\right)- C(\la+ \delta) (\la^{\frac{2}{2-q}} + \delta^{\frac{2}{2-q}})^{\frac{1}{n-2s}({n-\frac{(n-2s)q}{2}})} < -  D_0 \left( \la^{\frac{2}{2-q}} + \delta^{\frac{2}{2-q}}\right).\]
As $\la,\delta \to 0$, $|\log((\la^{\frac{2}{2-q}} + \delta^{\frac{2}{2-q}})^{\frac{1}{n-2s} })| \to \infty$ so in case $n = (n-2s)q$ we can obtain a $\Upsilon_2>0$ small enough such that
\[-C(\la+\delta)(\la^{\frac{2}{2-q}} + \delta^{\frac{2}{2-q}})^{\frac{n}{2(n-2s)} }|\log((\la^{\frac{2}{2-q}} + \delta^{\frac{2}{2-q}})^{\frac{1}{n-2s} })|
< - D_0 \left( \la^{\frac{2}{2-q}} + \delta^{\frac{2}{2-q}}\right). \]
Else if $n > (n-2s)q$ then $(\la+\delta)(\la^{\frac{2}{2-q}} + \delta^{\frac{2}{2-q}})^{\frac{q}{2} } \simeq (\la^{\frac{2}{2-q}} + \delta^{\frac{2}{2-q}})$ and hence clearly we can obtain  a $\Upsilon_2>0$ small enough such that
\[-C(\la+\delta)(\la^{\frac{2}{2-q}} + \delta^{\frac{2}{2-q}})^{\frac{q}{2} }
< -  D_0 \left( \la^{\frac{2}{2-q}} + \delta^{\frac{2}{2-q}}\right). \]
Setting $\Upsilon = \min\{\Upsilon_1, \Upsilon_2, \delta_*^{n-2s}\} >0$ we finally get that
\[\sup_{t \geq 0} I_{\la,\delta}(tw_0,tz_0) < c_0 \]
whenever $0 < \la^{\frac{2}{2-q}} + \delta^{\frac{2}{2-q}}< \Upsilon$. To prove the last part of the Lemma, we note that there exists $t_2 >0$ such that $(t_2w_0, t_2z_0) \in \mc N^-_{\la,\delta}$ and
\[l_{\la,\delta}^- \leq \; I_{\la,\delta}(t_2w_0, t_2z_0) \leq \; \sup_{t\geq 0} I_{\la,\delta}(tw_0, tz_0)  < c_0 \]
when $0 <\la^{\frac{2}{2-q}}+ \delta^{\frac{2}{2-q}}< \Upsilon$. This concludes the proof.} \hfill{\QED}
\end{proof}

\noi{ Before proving the existence of second solution, we make a remark at this stage.
\begin{Remark}\label{c_0<c_1}
Using Lemma \ref{unif-lwr-bd} it is easy to see that $c_1> c_0$, where $c_1$ is defined in Lemma \ref{l^-<PScrit-lev} and $c_0$ is defined in Lemma \ref{l^-<PScrit-lev-new}.
\end{Remark}}

{\begin{Theorem}\label{sec-sol}
There exists a $(u_2,v_2) \in \mc N_{\la,\delta}^-$ such that
 $I_{\la,\delta}(u_2,v_2) = l_{\la,\delta}^-$ in each of the following cases:
 \begin{enumerate}
 \item[(i)] $0<  \la^{\frac{2}{2-q}} + \delta^{\frac{2}{2-q}}< \Theta $ when $\mu \leq 4s$ and
 \item[(ii)] $0<  \la^{\frac{2}{2-q}} + \delta^{\frac{2}{2-q}}< \Theta_0:=\min\{\Theta, \Upsilon\}$ when $\mu>4s$.
 \end{enumerate}
 Moreover, $(u_2,v_2)$ is a weak solution of $(P_{\la,\delta})$.
\end{Theorem}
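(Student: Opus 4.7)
The plan is to find a $(PS)_{l^-_{\la,\delta}}$ sequence lying in $\mc N_{\la,\delta}^-$, extract a weak limit, and promote weak to strong convergence via the level estimates of Lemmas \ref{l^-<PScrit-lev} and \ref{l^-<PScrit-lev-new}. Since $\mc N_{\la,\delta}^0 = \emptyset$ by Lemma \ref{N_0-empty}, the set $\mc N_{\la,\delta}^-$ is closed and $I_{\la,\delta}$ is bounded below on it by Lemma \ref{func-bdd-below}. Applying Ekeland's variational principle on $\mc N_{\la,\delta}^-$ together with the implicit-function construction of Proposition \ref{IFT-N}, as in the proof of Theorem \ref{PS-seq}, produces a minimizing sequence $\{(u_k,v_k)\} \subset \mc N_{\la,\delta}^-$ with $I_{\la,\delta}(u_k,v_k) \to l^-_{\la,\delta}$ and $I_{\la,\delta}'(u_k,v_k) \to 0$ in $Y^*$. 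Lemma \ref{PS-seq-bdd} provides boundedness, so up to a subsequence $(u_k,v_k) \rightharpoonup (u_2,v_2)$ weakly in $Y$ and a.e.\ in $\Omega$, and Lemma \ref{unif-lwr-bd} ensures $(u_2,v_2)$ is already a weak solution of $(P_{\la,\delta})$.

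The crux of the proof is upgrading weak convergence to strong convergence. Setting $p_k = u_k - u_2$, $q_k = v_k - v_2$, the Brezis--Lieb lemma together with its nonlocal Choquard analogue (compare \cite{gao-yang}) yields
\[
\|(u_k,v_k)\|^2 = \|(u_2,v_2)\|^2 + \|(p_k,q_k)\|^2 + o(1),\qquad B(u_k,v_k) = B(u_2,v_2) + B(p_k,q_k) + o(1),
\]
while the subcritical $L^q$ term converges by compact embedding. Pairing $(I_{\la,\delta}'(u_k,v_k),(u_k,v_k)) \to 0$ with $(I_{\la,\delta}'(u_2,v_2),(u_2,v_2)) = 0$ gives $C_s^n \|(p_k,q_k)\|^2 = 2 B(p_k,q_k) + o(1)$. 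If $\|(p_k,q_k)\| \not\to 0$, combining this with the defining inequality $B(p_k,q_k) \leq (\tilde S_s^H)^{-2^*_\mu}\|(p_k,q_k)\|^{22^*_\mu}$ forces, along a subsequence, $\|(p_k,q_k)\|^2 \geq \bigl(C_s^n (\tilde S_s^H)^{2^*_\mu}/2\bigr)^{1/(2^*_\mu-1)} + o(1)$, and the energy decomposition then yields
\[
l^-_{\la,\delta} \geq I_{\la,\delta}(u_2,v_2) + \frac{n-\mu+2s}{2n-\mu}\left(\frac{C_s^n \tilde S_s^H}{2}\right)^{\frac{2n-\mu}{n-\mu+2s}}.
\]
In the regime $\mu > 4s$, the crude bound $I_{\la,\delta}(u_2,v_2) \geq -D_0(\la^{\frac{2}{2-q}} + \delta^{\frac{2}{2-q}})$ of Lemma \ref{unif-lwr-bd} makes the right-hand side exactly $c_0$, contradicting Lemma \ref{l^-<PScrit-lev-new}. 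In the regime $\mu \leq 4s$ I split on $(u_2,v_2)$: if it is zero then $I_{\la,\delta}(u_2,v_2) = 0 > I_{\la,\delta}(u_1,v_1) = l_{\la,\delta}^+$, so the right-hand side strictly exceeds $c_1$; if it is nonzero it is a nontrivial critical point and hence lies in $\mc N_{\la,\delta}$, giving $I_{\la,\delta}(u_2,v_2) \geq l_{\la,\delta} = I_{\la,\delta}(u_1,v_1)$ and again $l^-_{\la,\delta} \geq c_1$. Both subcases contradict the corollary to Lemma \ref{l^-<PScrit-lev}.

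Hence $(u_k,v_k) \to (u_2,v_2)$ strongly in $Y$, which yields $I_{\la,\delta}(u_2,v_2) = l^-_{\la,\delta}$. The lower bound $\inf\{\|(u,v)\| : (u,v) \in \mc N_{\la,\delta}^-\} > 0$ from Lemma \ref{inf-pos-neg}(ii) combined with strong convergence forces $(u_2,v_2) \neq (0,0)$; passing the Nehari identity to the limit places $(u_2,v_2)$ in $\mc N_{\la,\delta}$, and continuity of $(u,v) \mapsto \varphi''_{u,v}(1)$ along with $\mc N_{\la,\delta}^0 = \emptyset$ confines it to $\mc N_{\la,\delta}^-$. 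Lemma \ref{min-is-sol} then promotes this constrained minimizer to a full critical point, i.e.\ a weak solution of $(P_{\la,\delta})$. The main obstacle is the usual one in critical problems: matching the lower energy bound produced by the Brezis--Lieb splitting with the strict upper bound $l^-_{\la,\delta} < c_1$ (respectively $c_0$). The case $\mu > 4s$ requires the smaller range $\Theta_0 = \min\{\Theta,\Upsilon\}$ precisely because the only available lower bound for $I_{\la,\delta}(u_2,v_2)$ is the crude $-D_0(\la^{\frac{2}{2-q}} + \delta^{\frac{2}{2-q}})$, which must be absorbed into $c_0$; the sharper comparison $I_{\la,\delta}(u_2,v_2) \geq I_{\la,\delta}(u_1,v_1)$ used when $\mu \leq 4s$ is what permits the optimal threshold $\Theta$ there.
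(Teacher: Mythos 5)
Your proof is correct in substance, but it follows a genuinely different route from the paper. The paper never constructs a Palais--Smale sequence at level $l^-_{\la,\de}$: it works with a plain minimizing sequence in $\mc N_{\la,\delta}^-$ (bounded by Lemma \ref{inf-pos-neg}(ii)), so the weak limit is \emph{not} known a priori to be a critical point, and the loss-of-compactness analysis is carried out through the fibering maps. Writing $z_k=u_k-u_2$, $w_k=v_k-v_2$ with $\|(z_k,w_k)\|^2\to c^2$, $B(z_k,w_k)\to d^{22^*_\mu}$, the paper proves the Brezis--Lieb-type splitting for $B$ (its Claim (2), essentially what you import from \cite{gao-yang}), deduces $\varphi'_{u_2,v_2}(1)+C_s^n c^2-2d^{22^*_\mu}=0$, rules out $(u_2,v_2)=(0,0)$ by the level bounds, and then studies $g(t)=\varphi_{u_2,v_2}(t)+f(t)$ with $f(t)=\tfrac{C_s^n c^2t^2}{2}-\tfrac{d^{22^*_\mu}t^{22^*_\mu}}{2^*_\mu}$, using that $g(t)\le g(1)$ (since $t=1$ is the global maximum of each $\varphi_{u_k,v_k}$) and a three-case analysis on $t_2$ and $d$ to force $c=d=0$; only at the very end does Lemma \ref{min-is-sol} produce a solution. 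Your argument instead runs Ekeland plus Proposition \ref{IFT-N} on $\mc N_{\la,\delta}^-$ to get a genuine $(PS)_{l^-_{\la,\de}}$ sequence, so that Lemma \ref{unif-lwr-bd} makes the weak limit a critical point, after which the standard splitting $C_s^n\|(p_k,q_k)\|^2=2B(p_k,q_k)+o(1)$ and the definition of $\tilde S^H_s$ give the clean dichotomy against the strict thresholds $c_1$ (via the Corollary) and $c_0$ (via Lemma \ref{l^-<PScrit-lev-new}); I checked that your constant $\frac{C_s^n}{2}\frac{n-\mu+2s}{2n-\mu}\bigl(\frac{C_s^n(\tilde S^H_s)^{2^*_\mu}}{2}\bigr)^{\frac{1}{2^*_\mu-1}}$ indeed equals $\frac{n-\mu+2s}{2n-\mu}\bigl(\frac{C_s^n\tilde S^H_s}{2}\bigr)^{\frac{2n-\mu}{n-\mu+2s}}$, and your two subcases for $\mu\le 4s$ (using $I_{\la,\de}(u_2,v_2)\ge I_{\la,\de}(u_1,v_1)$ when nontrivial, and $I_{\la,\de}(u_1,v_1)<0$ when trivial) reproduce exactly the inequalities the paper reaches through its case analysis. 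What each approach buys: yours is shorter and makes the role of the strict energy thresholds transparent, at the price of re-running the Ekeland/implicit-function machinery on $\mc N_{\la,\delta}^-$; the paper's avoids that machinery on $\mc N_{\la,\delta}^-$ entirely but pays with the longer fibering case analysis.

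One caveat you should not gloss over: the phrase ``as in the proof of Theorem \ref{PS-seq}'' does not transfer verbatim, because that proof uses $l_{\la,\de}<0$ to derive the lower bound \eqref{PS-seq4} on $\|(u_k,v_k)\|$ (used twice, including in the non-degeneracy of the denominator in \eqref{IFT-N1}), whereas $l^-_{\la,\de}$ need not be negative. On $\mc N_{\la,\delta}^-$ this must be replaced by the bounds of Lemma \ref{inf-pos-neg}(ii) (norm bounded away from $0$, and bounded above on sublevel sets). You also need that $\mc N_{\la,\delta}^-$ is complete (it is closed in $Y$ by $\mc N_{\la,\delta}^0=\emptyset$ and the norm lower bound) and that the perturbed points $\zeta(z)(w_k-z)$ of Proposition \ref{IFT-N} remain in $\mc N_{\la,\delta}^-$ for $\|z\|$ small, which follows from continuity of $(u,v)\mapsto\varphi''_{u,v}(1)$ and the strict inequality $\varphi''_{u_k,v_k}(1)<0$. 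With these adjustments your argument is complete.
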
}
\begin{proof}
Let $\{(u_k,v_k)\} \subset \mc N_{\la,\delta}^-$ be a minimizing sequence such that
$\lim\limits_{k \to \infty}I_{\la,\delta}(u_k,v_k)= l_{\la,\delta}^-$.
By Lemma \ref{inf-pos-neg}$(ii)$, we know that $\{(u_k,v_k)\}$ is a bounded sequence in $X_0$. Hence there exists a $(u_2,v_2) \in Y$ such that, {upto a subsequence}, $(u_k,v_k) \rightharpoonup (u_2,v_2)$ weakly in $X_0$ as $k \to \infty$. \\
\textbf{Claim(1): } As $k \to \infty$, $u_k \to u_2$ and $v_k \to v_2$ strongly in $X_0$.\\
If not, we define $z_k = u_k-u_2$ and $w_k = v_k-v_2$ and assume that as $k \to \infty$
\[\|(z_k,w_k)\|^2 \to c^2 \; \text{and}\; B(z_k,w_k) \to d^{22^*_\mu}. \]
for some $c,d \neq 0$. Then as $k \to \infty$ we have
\[\|(u_k,v_k)\|^2 = \|(z_k,w_k)\|^2+ \|(u_2,v_2)\|^2 + o_k(1). \]
 Before proving claim (1) we state and prove the following.\\
\textbf{Claim(2): } As $k \to \infty$, $B(u_k,v_k) - B(z_k,w_k) \to B(u_2,v_2)$.\\
From fractional Sobolev embedding we have that
\[|z_k|^{2^*_\mu} -|u_k|^{2^*_\mu} \rightharpoonup |u_2|^{2^*_\mu} \; \text{and}\; |w_k|^{2^*_\mu} -|v_k|^{2^*_\mu} \rightharpoonup |v_2|^{2^*_\mu}\; \text{in} \; L^{\frac{2n}{2n-\mu}}(\mb R^n).\]
By Proposition \ref{HLS}, we have
\[|x|^{-\mu}\ast(|z_k|^{2^*_\mu} - |u_k|^{2^*_\mu}) \rightharpoonup |x|^{-\mu}\ast |u_2|^{2^*_\mu} \; \text{and}\; |x|^{-\mu}\ast(|w_k|^{2^*_\mu} - |v_k|^{2^*_\mu}) \rightharpoonup |x|^{-\mu}\ast |v_2|^{2^*_\mu} \; \text{in}\; L^{\frac{2n}{\mu}}(\mb R^n).  \]
Also from boundedness of $\{u_k\}$ and $\{v_k\}$ in $L^{\frac{2n}{n-2s}}(\mb R^n)$ we know that $|z_k|^{2^*_\mu} \rightharpoonup 0$ and $|w_k|^{2^*_\mu} \rightharpoonup 0$ in $L^{\frac{2n}{2n-\mu}}(\mb R^n)$. This gives $B(u_k-z_k,w_k) \to 0$ and $B(v_k-w_k,z_k) \to 0$ as $k \to \infty$. This altogether proves claim(2) because we can write
\begin{align*}
B(u_k,v_k)- B(z_k,w_k) = B(u_k-z_k,v_k-w_k)+ B(v_k-w_k,z_k)+B(u_k-z_k,w_k).
\end{align*}
Since $\{(u_k,v_k)\} \subset \mc N_{\la,\delta}^-$, $\lim\limits_{ k \to \infty} \varphi^\prime_{u_k,v_k}(1)=0$. This gives
\begin{equation}\label{sec-sol4}
\varphi^\prime_{u_2,v_2}(1)+ {C_s^n} c^2 - 2d^{22^*_\mu} = 0.
\end{equation}
\textbf{Claim(3): } $(u_2,v_2)$ is non-trivial.\\
Suppose not and $u_2\equiv 0 \equiv v_2$. This implies $c \neq 0$ because of Lemma \ref{inf-pos-neg}$(ii)$.  Also using definition of $\tilde S_s^H$ and ${C_s^n}c^2 = 2 d^{22^*_\mu}$ (by \eqref{sec-sol4}), we get
\[\frac{c^2}{2} \geq \left(\frac{{C_s^n}\tilde S_s^H}{2} \right)^{\frac{22^*_\mu}{2(2^*_\mu-1)}}.\]
Therefore
\begin{equation}\label{sec-sol1-new}
\begin{split}
l_{\la,\delta}^- = \lim_{k \to \infty} I_{\la,\delta}(u_k,v_k)
&= I_{\la,\delta}(0,0) + \frac{{C_s^n}c^2}{2}- \frac{2d^{22^*_\mu}}{22^*_\mu}\\
 &=\frac{{C_s^n}c^2}{2} \left( 1- \frac{1}{2^*_\mu} \right) \geq \left( \frac{n-\mu+2s}{2n-\mu} \right)\left( \frac{{C_s^n}\tilde S_s^H}{2} \right)^{\frac{2n-\mu}{n-\mu+2s}}.\hspace{5.0cm}
 \end{split}
\end{equation}
{If $\mu\leq 4s$, then using \eqref{sec-sol1-new} with Lemma \ref{l^-<PScrit-lev}, we have that $I_{\la,\delta}(u_1,v_1)>0$ but this is a contradiction to $I_{\la, \delta}(u_1,v_1)= l_{\la,\delta}^+ <0$ (by Lemma \ref{inf-pos-neg}$(i)$). Otherwise if $\mu>4s$, then using \eqref{sec-sol1-new} with Lemma \ref{l^-<PScrit-lev-new}, we get $-D_0(\la^{\frac{2}{2-q}}+\delta^{\frac{2}{2-q}}) \geq 0$ which is again a contradiction.} This proves claim(3). Since $(u_2,v_2)\in Y\setminus \{(0,0)\}$ and $0 < \la^{\frac{2}{2-q}} + \delta^{\frac{2}{2-q}}< \Theta$ {for both the cases $\mu\leq 4s$ as well as $\mu>4s$}, by Lemma \ref{Theta-def-lem} we know that there exists $t_1,t_2$ such that $0 < t_1<t_2$, $t_1(u_2,v_2)\in \mc N_{\la,\delta}^+$ and $t_2(u_2,v_2)\in \mc N_{\la,\delta}^-$. That is $\varphi_{u_2,v_2}^{\prime}(t_1) =0= \varphi_{u_2,v_2}^\prime(t_2)$. Let us define the following two functions
\[f(t)= \frac{{C_s^n}c^2t^2}{2}-\frac{d^{22^*_\mu}t^{22^*_\mu}}{2^*_\mu}\; \text{and}\; g(t)= \varphi_{u_2,v_2}(t)+f(t). \]
Then we consider the three cases as below:
\begin{enumerate}
\item[(i)] $t_2<1$,
\item[(ii)] $t_2 \geq 1$ and $d>0$,
\item[(iii)] $t_2 \geq 1$ and $d=0$.
\end{enumerate}
$(i)$ Using \eqref{sec-sol4} we get $g^\prime(1)= \varphi_{u_2,v_2}^\prime(1)+ {C_s^n} c^2-2d^{22^*_\mu} =0 $. {Since $\{(u_k,v_k)\} \subset \mc N_{\la,\delta}^-$, for all $t>0$ we get
\begin{equation}\label{sec-sol7}
\varphi_{u_k,v_k}(t) \leq \varphi_{u_k,v_k}(1)
 \end{equation}
  Since $g(t)= \lim\limits_{k \to \infty}\varphi_{u_k,v_k}(t)$, passing on the limits as $k \to \infty$ in \eqref{sec-sol7} we obtain $g(t)\leq g(1)$, for $t>0$.} Therefore
\[ l_{\la,\delta}^- = \lim_{k \to \infty}\varphi_{u_k,v_k}(1)=g(1)>g(t_2)\geq I_{\la,\delta}(t_2u_2,t_2v_2) +\frac{{t^2_2} }{2} ({C_s^n}c^2-2d^{22^*_\mu})> I_{\la,\delta}(t_2u_2,t_2v_2) \geq l_{\la,\delta}^- \]
which is  a contradiction.\\
$(ii)$ We define $t_* = \left(\frac{{C_s^n}c^2}{2d^{22^*_\mu}}\right)^{\frac{1}{22^*_\mu-2}}$ and then it is easy to compute that $f(t)$ attains its maximum at $t=t_*$. Also we compute and find that
\begin{equation*}\label{sec-sol5}
f(t_*)=  \frac{n-\mu+2s}{2n-\mu} \left( \frac{{C_s^n}c^2}{2d^2}\right)^{\frac{2^*_\mu}{2^*_\mu-1}} \geq  \frac{n-\mu+2s}{2n-\mu} \left( \frac{{C_s^n}\tilde S_s^H}{2}\right)^{\frac{2^*_\mu}{2^*_\mu-1}}.
\end{equation*}
Moreover $f^\prime(t)= t({C_s^n}c^2-2d^{22^*_\mu}t^{22^*_\mu-2}) >0$ if $t \in (0,t_*)$ and $f^\prime(t)\leq0$ if $t\geq t_*$. Moreover $g(1)= \max\limits_{t>0}g(t) \geq g(t_*)$. So if $t_*\leq 1$ then
\begin{equation}\label{sec-sol6}
\begin{split}
l_{\la,\delta}^- = g(1) &\geq g(t_*) = I_{\la,\delta}(t_*u_2,t_*v_2)+ f(t_*) \geq I_{\la,\delta}(t_1u_2,t_1v_2)+ \frac{n-\mu+2s}{2n-\mu} \left( \frac{{C_s^n}\tilde S_s^H}{2}\right)^{\frac{2^*_\mu}{2^*_\mu-1}}\\
&\geq l_{\la,\delta}^+ +\frac{n-\mu+2s}{2n-\mu} \left( \frac{{C_s^n}\tilde S_s^H}{2}\right)^{\frac{2^*_\mu}{2^*_\mu-1}} \geq I_{\la,\delta}(u_1,v_1)+ \frac{n-\mu+2s}{2n-\mu} \left( \frac{{C_s^n}\tilde S_s^H}{2}\right)^{\frac{2^*_\mu}{2^*_\mu-1}}=c_1
\end{split}
\end{equation}
which is a contradiction to Lemma \ref{l^-<PScrit-lev} {in the case $\mu \leq 4s$. Whereas when $\mu>4s$, using Remark \ref{c_0<c_1} and \eqref{sec-sol6} we get that $l_{\la,\delta}^- \geq c_1 >c_0$ which is a contradiction to Lemma \ref{l^-<PScrit-lev-new}}. Therefore we must have $t_* >1$. Since $g^\prime(t)\leq 0$ for $t \geq 1$, whenever $t \in [1,t_*]$ we get $\varphi^\prime_{u_2,v_2}(t) \leq -f^\prime(t) \leq 0$. This gives either $t_*\leq t_1$ or $t_2=1$. If $t_* \leq t_1$ then \eqref{sec-sol6} holds true and we arrive at a contradiction whereas if $t_2=1$ then $(u_2,v_2) \in \mc N_{\la,\delta}^-$ which implies ${C_s^n}c^2=2d^{22^*_\mu}$ (by \eqref{sec-sol4}). This gives
\begin{align*}
l_{\la,\delta}^- &=g(1)= I_{\la,\delta}(u_2,v_2) + d^{22^*_\mu}\left( 1-\frac{1}{2^*_\mu}\right)\geq I_{\la,\delta}(u_2,v_2) +  \frac{n-\mu+2s}{2n-\mu} \left( \frac{{C_s^n}\tilde S_s^H}{2}\right)^{\frac{2^*_\mu}{2^*_\mu-1}}\\
 &\geq I_{\la,\delta}(t_1u_2,t_1v_2) +  \frac{n-\mu+2s}{2n-\mu} \left( \frac{{C_s^n}\tilde S_s^H}{2}\right)^{\frac{2^*_\mu}{2^*_\mu-1}} \geq I_{\la,\delta}(u_1,v_1)+  \frac{n-\mu+2s}{2n-\mu} \left( \frac{{C_s^n}\tilde S_s^H}{2}\right)^{\frac{2^*_\mu}{2^*_\mu-1}}
\end{align*}
which contradicts Lemma \ref{l^-<PScrit-lev} {in the case $\mu \leq 4s$. Whereas when $\mu>4s$, using Remark \ref{c_0<c_1} and \eqref{sec-sol6} we get that $l_{\la,\delta}^- \geq c_1 > c_0$ which is a contradiction to Lemma \ref{l^-<PScrit-lev-new}}.\\
Hence, only possibility is that $(iii)$ holds true that is $t_2\geq 1$ and $d=0$. If $c \neq 0$ then \eqref{sec-sol4} implies $\varphi_{u_2,v_2}^\prime(1)=-c^2<0$ and also $\varphi^{\prime\prime}_{u_2,v_2}(1)<0$ which is a contradiction since $ t_2 \geq 1$. Thus $c=0$ and this proves claim(1). Therefore $I_{\la,\delta}(u_2,v_2)= l_{\la,\delta}^-$ and obviously $(u_2,v_2)\in \mc N_{\la,\delta}^-$. Finally, $(u_2,v_2)$ is a weak solution of $(P_{\la,\delta})$ follows from Lemma \ref{min-is-sol}.\hfill{\QED}
\end{proof}\\

\subsection{Proof of Main Theorem}

 \textbf{Proof of Theorem \ref{MT}: } {By Theorem \ref{first-sol} and \ref{sec-sol} we know that $(P_{\la,\delta})$ has two solutions $(u_1,v_1) \in \mc N_{\la,\delta}^+$ and $(u_2,v_2) \in \mc N_{\la,\delta}^-$ whenever $0<  \la^{\frac{2}{2-q}} + \delta^{\frac{2}{2-q}}< \Theta$ if $\mu \leq 4s$  and whenever $0<  \la^{\frac{2}{2-q}} + \delta^{\frac{2}{2-q}}< \Theta_0$ if $\mu>4s$}. Obviously they are distinct solutions because $ \mc N_{\la,\delta}^+ \cap \mc N_{\la,\delta}^- = \emptyset$. The proof is then completed using Proposition \ref{positive-sol}.\hfill{\QED}

\section*{Acknowledgments}The authors were funded by IFCAM (Indo-French Centre for Applied Mathematics) UMI CNRS 3494 under the project "Singular phenomena in reaction diffusion equations and in conservation laws"

 \linespread{0.5}

\end{document}